\title{Higher-order illative combinatory logic}
\author{\L{}ukasz Czajka \medskip \\
  \small \vspace{-0.2em}
    Institute of Informatics, University of Warsaw\\
  \small \vspace{-0.2em}
    Banacha 2, 02-097 Warszawa, Poland\\
  \small
    lukaszcz@mimuw.edu.pl
}
\theoremstyle{plain}
\newtheorem{theorem}{Theorem}[section]
\newtheorem{lemma}[theorem]{Lemma}
\newtheorem{fact}[theorem]{Fact}
\newtheorem{corollary}[theorem]{Corollary}
\theoremstyle{definition}
\newtheorem{definition}[theorem]{Definition}
\newtheorem{remark}[theorem]{Remark}
\theoremstyle{plain}
\newtheorem{sstheorem}{Theorem}[subsection]
\newtheorem{sslemma}[sstheorem]{Lemma}
\newtheorem{ssfact}[sstheorem]{Fact}
\newtheorem{sscorollary}[sstheorem]{Corollary}
\theoremstyle{definition}
\newtheorem{ssdefinition}[sstheorem]{Definition}
\newtheorem{ssnotation}[sstheorem]{Notation}
\newtheorem{ssexample}[sstheorem]{Example}
\newcommand{\leftidx}[3]{{\;\vphantom{#2}}#1\!\!#2#3}
\newcommand{\from}{\ensuremath{\leftarrow}}
\newcommand{\reduces}{\ensuremath{\twoheadrightarrow}}
\newcommand{\contr}{\ensuremath{\rightarrow}}
\newcommand{\valuation}[3]{\ensuremath{\llbracket#1\rrbracket_{#2}^{#3}}}
\newcommand{\proves}{\ensuremath{\vdash}}
\newcommand{\forces}{\ensuremath{\Vdash}}
\newcommand{\notproves}{\ensuremath{\nvdash}}
\newcommand{\notforces}{\ensuremath{\nVdash}}
\newcommand{\pair}[2]{\langle#1,#2\rangle}
\newcommand{\transl}[1]{\ensuremath{\lceil#1\rceil}}
\newcommand{\rank}{\ensuremath{\mathrm{rank}}}
\newcommand{\Ps}{\ensuremath{\mathrm{\mathsf P}}}
\newcommand{\Y}{\ensuremath{{\Upsilon}}}
\newcommand{\T}{\ensuremath{{\mathbb T}}}
\newcommand{\Ts}{\ensuremath{{\mathscr{T}}}}
\newcommand{\Tc}{\ensuremath{{\cal T}}}
\newcommand{\B}{\ensuremath{{\cal B}}}
\newcommand{\I}{\ensuremath{{\cal I}}}
\newcommand{\Sc}{\ensuremath{{\cal S}}}
\newcommand{\F}{\ensuremath{{\cal F}}}
\newcommand{\C}{\ensuremath{{\cal C}}}
\newcommand{\D}{\ensuremath{{\cal D}}}
\newcommand{\M}{\ensuremath{{\cal M}}}
\newcommand{\N}{\ensuremath{{\cal N}}}
\newcommand{\Rmnum}[1]{\expandafter\@slowromancap\romannumeral #1@}
\date{November 25, 2014}
\begin{document}
\maketitle

\begin{abstract}
  We show a model construction for a system of higher-order illative
  combinatory logic~$\I_\omega$, thus establishing its strong
  consistency. We also use a variant of this construction to provide a
  complete embedding of first-order intuitionistic predicate logic
  with second-order propositional quantifiers into the system $\I_0$
  of Barendregt, Bunder and Dekkers, which gives a partial answer to a
  question posed by these authors.
\end{abstract}

\noindent This paper is a revised version of~\cite{Czajka2013JSL}
which appeared in the Journal of Symbolic Logic, vol.~78, issue~3,
pp.~837-872. An error in Section~5 and some minor mistakes in
Section~4 are corrected. Also, the construction in Section~4 is
slightly simplified. \copyright~2013 by the Association for Symbolic
Logic.

%\tableofcontents
%\listoffigures
%\listoftables

\section{Introduction}\label{r:sec_intro}

Illative systems of combinatory logic or lambda-calculus consist of
type-free combinatory logic or lambda-calculus extended with
additional constants intended to represent logical notions. In fact,
early systems of combinatory logic and lambda calculus (by
Sch\"onfinkel, Curry and Church) were meant as very simple foundations
for logic and mathematics. However, the Kleene-Rosser and Curry
paradoxes led to this work being abandoned by most logicians.

It has proven surprisingly difficult to formulate and show consistent
illative systems strong enough to interpret traditional logic. This
was accomplished in \cite{illat01}, \cite{illat02} and \cite{illat03},
where several systems were shown complete for the
universal-implicational fragment of first-order intuitionistic
predicate logic.

The difficulty in proving consistency of illative systems in essence
stems from the fact that, lacking a type regime, arbitrary recursive
definitions involving logical operators may be formulated, including
negative ones. In early systems containing an unrestricted implication
introduction rule this was the reason for the Curry's paradox
\cite[\textsection8A]{illat01, CurryFeys1958}, where an arbitrary term
$X$ is derived using a term $Y$ satisfying $Y =_{\beta} Y \supset
X$. For an overview of and introduction to illative combinatory logic
see \cite{illat01}, \cite{Seldin2009} or \cite{CurryFeys1958}.

Systems of illative combinatory logic are very close to Pure Type
Systems. The rules of illative systems, however, have fewer
restrictions, judgements have the form $\Gamma \proves t$ where $t$ is
an arbitrary term instead of $\Gamma \proves N : C$. This connection
has been explored in \cite{BunderDekkers2005} where some illative-like
systems were proven equivalent to more liberal variants of PTSs from
\cite{BunderDekkers2001}. Those illative systems, however, differ
somewhat from what is in the literature.

In~\cite{Czajka2011} an algebraic treatment of a combination of
classical first-order logic with type-free combinatory logic was
given. On the face of it, the system of \cite{Czajka2011} seems to be
not quite like traditional illative combinatory logic, but the methods
used in the present paper are a (substantial) extension of those
from~\cite{Czajka2011}.

In this work we construct a model for a system of classical
higher-order illative combinatory logic~$\I_\omega^c$, thus
establishing a strong consistency result. We also use a variant of
this construction to improve slightly on the results
of~\cite{illat01}. We show a complete embedding of the system
$\mbox{PRED2}_0$ of first-order intuitionistic many-sorted predicate
logic with second-order propositional quantifiers into the
system~$\I_0$ which is an extension of~$\I \Xi$ from~\cite{illat01}.

To be more precise, we define a translation~$\transl{-}$ from the
language of~$\mbox{PRED2}_0$ to the language of~$\I_0$, and a
mapping~$\Gamma$ from sets of formulas of~$\mbox{PRED2}_0$ to sets of
terms of~$\I_0$. The embedding is proven to satisfy the following for
any formula~$\varphi$ of~$\mbox{PRED2}_0$ and any set of
formulas~$\Delta$ of~$\mbox{PRED2}_0$:
\[
\Delta \proves_{\mathrm{PRED2}_0} \varphi \mathrm{\ \ iff\ \ }
       \transl{\Delta}, \Gamma(\Delta, \varphi) \proves_{\I_0} \transl{\varphi}
\]
where $\Delta, \varphi$ stands for $\Delta \cup \{\varphi\}$. The
implication from left to right is termed soundness of the embedding,
from right to left -- completeness.

Our methods are quite different from those of \cite{illat01}, where an
entirely syntactic approach is adopted. We define a Kripke semantics
for illative systems and prove it sound and complete\footnote{In fact,
  for completeness of the embedding the easier soundness of the
  semantics would suffice, i.e., the completeness of the semantics is
  not necessary for the main results of this paper.}. Given a Kripke
model $\N$ for $\mbox{PRED2}_0$ we show how to construct an illative
Kripke model $\M$ for $\I_0$ such that exactly the translations of
statements true in a state of $\N$ are true in the corresponding state
of $\M$. This immediately implies completeness of the embedding.

The model constructions for $\I_0$ and $\I_\omega^c$ are similar, but
the latter is much more intricate. The basic idea is to define for
each ordinal $\alpha$ a relation $\leadsto_\alpha$ between terms and
so called ``canonical terms''. To every canonical term we associate a
unique type. In a sense, the set of all canonical terms of a given
type fully describes this type. Intuitively, $t \leadsto_\alpha \rho$
holds if $\rho$ is a ``canonical'' representant of $t$ in the type of
$\rho$. This relation encompasses a definition of truth when $\rho \in
\{\top, \bot\}$. Essentially, $\leadsto_\alpha$ is defined by
transfinite induction in a monotonous way. We show that there must
exist some ordinal~$\zeta$ such that $\leadsto_\alpha =
\leadsto_\zeta$ for $\alpha > \zeta$. We use the relation
$\leadsto_\zeta$ to define our model. Then it remains to prove that
what we obtain really is the kind of model we expect, which is the
hard part.

\section{Preliminaries}\label{r:sec_prelim}

In this section we define the system $\mbox{PRED2}_0$ of first-order
many-sorted intuitionistic predicate logic with second-order
propositional quantifiers, together with its (simplified) Kripke
semantics. We also briefly recapitulate the definition of full models
for a system of classical higher-order logic ${\mbox{PRED}\omega}^c$.

\begin{definition}\label{r:def_pred2}
  The system $\mbox{PRED}\omega$ of higher-order intutionistic logic
  is defined as follows.
  \begin{itemize}
  \item The \emph{types} are given by
    \[
    \Tc \;\; ::= \;\; o \;|\; \B \;|\; \Tc \rightarrow \Tc
    \]
    where $\B$ is a specific finite set of base types. The type $o$ is
    the type of propositions.
  \item The set of terms of $\mbox{PRED}\omega$ of type $\tau$,
    denoted $T_\tau$, is defined by the following grammar, where for
    each type $\tau$ the set $V_\tau$ is a countable set of variables
    and $\Sigma_\tau$ is a countable set of constants.
    \begin{eqnarray*}
      T_{\tau} &::=& V_{\tau} \;|\; \Sigma_{\tau} \;|\; T_{\sigma\rightarrow\tau} \cdot T_{\sigma}
      \mathrm{\ for\ } \sigma \in \Tc \;|\; \lambda V_{\tau_1}
      . T_{\tau_2} \mathrm{\ if\ } \tau = \tau_1\to\tau_2 \\
      T_{o} &::=& V_{o} \;|\; \Sigma_{o} \;|\; T_{\tau\rightarrow o}
      \cdot T_{\tau} \mathrm{\ for\ } \tau \in \Tc \;|\; T_{o} \supset T_{o} \;|\;
      \forall V_{\tau} . T_{o} \mathrm{\ for\ } \tau \in \Tc
    \end{eqnarray*}
    Terms of type $o$ are called \emph{formulas}.
  \item We identify $\alpha$-equivalent formulas, i.e., formulas
    differing only in the names of bound variables are considered
    identical.
  \item Every variable $x$ has an associated unique type, i.e., there
    is exactly one $\tau$ such that $x \in V_\tau$. We sometimes use
    the notation $x_\tau$ for a variable such that $x_\tau \in
    V_\tau$.
  \item The system $\mbox{PRED}\omega$ is given by the following rules
    and an axiom, where $\Delta$ is a finite set of formulas,
    $\varphi, \psi$ are formulas. The notation $\Delta, \varphi$ is a
    shorthand for $\Delta \cup \{\varphi\}$.

    \medskip

    {\bf Axiom}\\
    \(
    \Delta, \varphi \proves \varphi
    \)
    \medskip

    {\bf Rules}

    \begin{longtable}{cc}
      \(
      {\supset_i:}\; \inferrule{\Delta, \varphi \proves
        \psi}{\Delta \proves \varphi \supset \psi}
      \)
      &
      \(
      {\supset_e:}\; \inferrule{\Delta \proves \varphi \supset
        \psi \\ \Delta \proves \varphi}{\Delta \proves \psi}
      \)
      \\
      & \\
      \(
        {\forall_i:}\; \inferrule{\Delta \proves
          \varphi}{\Delta \proves \forall x_\tau . \varphi} \; x_\tau \notin FV(\Delta)
      \)
      &
      \(
        {\forall_e:}\; \inferrule{\Delta \proves \forall x_\tau . \varphi}{\Delta
          \proves \varphi[x_\tau/t]}\; t \in T_\tau
      \)
      \\
      & \\
      \multicolumn{2}{c}{
        \(
        {\mathrm{conv}:}\; \inferrule{\Delta \proves \varphi
          \\ \varphi =_{\beta\eta} \psi}{\Delta
          \proves \psi}
        \)
      }
    \end{longtable}
  \end{itemize}

  The classical variant ${\mbox{PRED}\omega}^c$ is defined by adding to
  $\mbox{PRED}\omega$ the law of double negation as an axiom
  \[
  \Delta \proves \left(\left(\varphi \supset \bot\right) \supset \bot\right)
  \supset \varphi
  \]
  where $\bot \equiv \forall x_o . x_o$ and $x_o \in V_o$.

  The system $\mbox{PRED2}_0$ is the fragment of second-order
  many-sorted predicate calculus restricted to formulas in which
  second-order quantifiers are only propositional. It is obtained from
  $\mbox{PRED}\omega$ by dropping the rule $\mbox{conv}$, restricting
  the types to
  \[
  \Tc \;\; ::= \;\; o \;|\; \B \;|\; \B \rightarrow \Tc
  \]
  and changing the definition of terms to
  \begin{eqnarray*}
    T_{\tau} &::=& V_{\tau} \;|\; \Sigma_{\tau} \;|\; T_{\sigma\rightarrow\tau} \cdot T_{\sigma}
    \;\; \mathrm{for\ all\ } \tau \in \Tc,\, \sigma \in \B \\
    T_{o} &::=& V_{o} \;|\; \Sigma_{o} \;|\;
    T_{\sigma\rightarrow o} \cdot T_{\sigma} \;|\; T_{o} \supset T_{o} \;|\; \forall V_{\tau} . T_{o}
    \mathrm{\ for\ } \tau \in \B \cup \{o\},\, \sigma \in \B
  \end{eqnarray*}
\end{definition}

For an arbitrary set~$\Delta$ we write $\Delta \proves_S \varphi$
if~$\varphi$ is derivable from a subset of~$\Delta$ in system~$S$. We
drop the subscript when obvious or irrelevant. Note that we trivially
have weakening with this definition, i.e., if $\Delta \proves \varphi$
then $\Delta' \proves \varphi$ for any $\Delta' \supseteq \Delta$.

In the rest of this section we assume a fixed set of base types and
fixed sets of constants~$\Sigma_\tau$ for each type~$\tau \in \Tc$. We
assume~$\Tc$,~$T_\tau$, etc. to refer either to~$\mbox{PRED}\omega$
or~$\mbox{PRED2}_0$, depending on the context.

The systems contain only~$\supset$ and~$\forall$ as logical
operators. However, it is well-known that all other connectives may be
defined from these with the help of the second-order propositional
universal quantifier.

We denote by~$t[x/t']$ a term obtained from~$t$ by simultaneously
substituting all free occurences of~$x$ with~$t'$.

\begin{definition}
  A \emph{full model} for ${\mbox{PRED}\omega}^c$ is a pair
  \[
  \M = \langle \{\D_\tau \;|\; \tau \in \Tc \}, I \rangle
  \]
  where each $\D_\tau$ is a nonempty set for $\tau \in \B$, $\D_o =
  \{\top, \bot\}$, each $\D_{\tau_1\to\tau_2}$ is the set of all
  functions from $\D_{\tau_1}$ to $\D_{\tau_2}$, and $I$ is a function
  mapping constants of type $\tau$ to $\D_\tau$. The interpretation
  function $\valuation{}{}{}$ and the satisfaction relation $\models$
  are defined in the standard way. It is well-known and easy to show
  that $\Delta \proves_{{\mathrm{PRED}\omega}^c} \varphi$ implies
  $\Delta \models \varphi$.
\end{definition}

The rest of this section is devoted to introducing a simplified
variant of Kripke semantics for~$\mbox{PRED2}_0$ and proving it sound
and complete. The development is mostly but not completely standard.

\begin{definition}\label{r:def_kripke_pred2}
  A \emph{Kripke pre-model} of $\mbox{PRED2}_0$ is a tuple
  \[
  \M = \langle \Sc, \le, \{\D_\tau \;|\; \tau \in \Tc \}, \cdot, I,
  \varsigma \rangle
  \]
  where~$\Sc$ is a set of~\emph{states}, $\le$ is a partial order on
  $\Sc$, the set $\D_\tau$ is the domain for type $\tau$, the function
  $\cdot$ is a binary application operation, $I$ is an interpretation
  of constants, and $\varsigma$ is a function assigning upward-closed
  (w.r.t. $\le$) subsets of $\Sc$ to elements of $\D_{o}$. A set $X
  \subseteq \Sc$ is upward-closed w.r.t. $\le$ when for all $s_1, s_2
  \in \Sc$, if $s_1 \in X$ and $s_1 \le s_2$, then $s_2 \in X$ as
  well. We sometimes write $\varsigma_\M$, $\Sc_\M$, etc., to stress
  that they are components of $\M$. Furthermore, the following
  conditions are imposed on a Kripke pre-model:
  \begin{itemize}
  \item $\D_\tau$ is nonempty for any $\tau$,
  \item for any $d_1 \in \D_{\tau_1\rightarrow\tau_2}$ and $d_2 \in
    \D_{\tau_1}$ we have $d_1 \cdot d_2 \in \D_{\tau_2}$,
  \item $I(c) \in \D_\tau$ for any $c \in \Sigma_\tau$.
  \end{itemize}

  A valuation is a function that, for all types~$\tau$, maps~$V_\tau$
  into~$D_\tau$.  When we want to stress that a valuation is
  associated with a structure $\M$, we call it an
  $\M$-valuation. If~$u$ is a valuation, $d \in D_\tau$ and $x_\tau$
  is a variable of type~$\tau$, then by~$u[x_\tau/d]$ we denote a
  valuation~$u'$ such that $u'(y) = u(y)$ for $y \ne x_\tau$ and
  $u(x_\tau) = d$. For a given structure~$\M$ and an
  $\M$-valuation~$u$, an interpretation~$\valuation{}{\M}{u}$
  (sometimes abbreviated by~$\valuation{}{}{}$) is a function mapping
  terms of type~$\tau$ to~$\D_\tau$, and satisfying the following:
  \begin{itemize}
  \item $\valuation{x}{}{u} = u(x)$ for a variable $x$,
  \item $\valuation{c}{}{u} = I(c)$ for $c \in \Sigma_\tau$,
  \item $\valuation{t_1 t_2}{}{u} = \valuation{t_1}{}{u} \cdot
    \valuation{t_2}{}{u}$.
  \end{itemize}

  For a formula $\varphi$, a state $s$ and a valuation $u$ we write
  $s,u \forces_\M \varphi$ if $s \in
  \varsigma(\valuation{\varphi}{\M}{u})$. Given a set of formulas
  $\Delta$, we use the notation $s, u \forces_\M \Delta$ if $s, u
  \forces_\M \varphi$ for all $\varphi \in \Delta$. We drop the
  subscript $\M$ when obvious or irrelevant.

  A \emph{Kripke model} is a Kripke pre-model $\M$ satisfying the
  following for any state~$s$ and any valuation~$u$:
  \begin{itemize}
  \item $s, u \forces \varphi \supset \psi$ iff for all $s' \ge s$
    such that $s', u \forces \varphi$ we have $s', u \forces \psi$,
  \item $s, u \forces \forall x_\tau . \varphi$ for $x_\tau \in
    V_\tau$ iff for all $s' \ge s$ and all $d \in \D_\tau$ we have
    $s', u[x_\tau/d] \forces \varphi$,
  \item $s, u \notforces \forall p . p$ for $p \in V_o$.
  \end{itemize}

  We write $\Delta \forces \varphi$ if for every Kripke model $\M$,
  every state $s$ of $\M$, and every valuation $u$, the condition $s,
  u \forces_\M \Delta$ implies $s, u \forces_\M \varphi$.
\end{definition}

\begin{remark}
  What we call Kripke semantics is in fact a somewhat simplified
  version of the usual notion. It is not much more than a
  reformulation of the inference rules. There are no conditions for
  connectives other than $\forall$ and $\supset$, so for instance with
  our definition $s, u \forces \varphi \vee \psi$ need not imply $s, u
  \forces \varphi$ or $s, u \forces \psi$, where $\varphi \vee \psi$
  is defined in the standard way as $\forall x_o . (\varphi \supset
  x_o) \supset (\psi \supset x_o) \supset x_o$. We also assume
  constant domains.\footnote{A reader concerned by this is invited to
    invent an infinite Kripke model (as defined in
    Definition~\ref{r:def_kripke_pred2}) falsifying the Grzegorczyk's
    scheme $\forall x (\psi \vee \varphi(x)) \supset \psi \vee \forall
    x \varphi(x)$. This scheme is not intuitionistically valid, but
    holds in all models with constant domains, in the usual
    semantics.} The resulting notion of a model is quite syntactic,
  which allows us to simplify the usual completeness proof
  considerably.

  Another peculiarity is the presence of the function~$\varsigma$. It
  may seem superfluous, but it is necessary in the Kripke semantics
  for illative systems in Section~\ref{r:sec_illat} where we do not
  know \emph{a priori} which terms represent propositions. For the
  sake of uniformity we already introduce it here.
\end{remark}

\begin{lemma}\label{r:lem_repres_0}
  If $\M$ is a Kripke model, $x \in V_\tau$, $t_0 \in T_\tau$, $t \in
  T_{\tau'}$ and $\tau' \ne o$, then:
  \[
  \valuation{t[x/t_0]}{\M}{u} = \valuation{t}{\M}{u'}
  \]
  where we use the notation $u'$ for $u[x/\valuation{t_0}{}{u}]$.
\end{lemma}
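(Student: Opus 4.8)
The plan is to proceed by structural induction on the term~$t$. The crucial preliminary observation is that, because $\tau' \neq o$, the term~$t$ cannot contain any logical connective. Inspecting the grammar of~$\mbox{PRED2}_0$, a term of a type other than~$o$ is built exclusively from variables, constants, and applications $t_1 \cdot t_2$ with $t_1 \in T_{\sigma\rightarrow\tau''}$ and $t_2 \in T_\sigma$ where $\sigma \in \B$. Since $\sigma \in \B$ entails $\sigma \neq o$ and $\sigma\rightarrow\tau''$ is an arrow type (hence also $\neq o$), neither argument of such an application has type~$o$. By induction this propagates to all subterms, so every subterm of~$t$ again has a type distinct from~$o$. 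Consequently the three defining clauses of~$\valuation{}{\M}{u}$ (for variables, constants and applications) determine $\valuation{t}{\M}{u}$ completely, and no interpretation of~$\supset$ or~$\forall$ is ever required. This is precisely why the hypothesis $\tau' \neq o$ is needed: for formulas these connectives are handled only through the forcing relation~$\forces$ and the function~$\varsigma$, not by the valuation.

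First I would handle the base cases. If~$t$ is a constant~$c$, then $t[x/t_0] = c$ and both sides equal $I(c)$. If~$t$ is a variable~$y$, there are two subcases. When $y \neq x$ we have $t[x/t_0] = y$ and $u'(y) = u(y)$, so both sides equal $u(y)$. When $y = x$ we have $t[x/t_0] = t_0$, whence $\valuation{t[x/t_0]}{\M}{u} = \valuation{t_0}{\M}{u}$, while $\valuation{t}{\M}{u'} = u'(x) = \valuation{t_0}{}{u}$ by the definition of $u' = u[x/\valuation{t_0}{}{u}]$; the two sides agree.

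For the inductive step, suppose $t = t_1 \cdot t_2$ with $t_1 \in T_{\sigma\rightarrow\tau'}$, $t_2 \in T_\sigma$ and $\sigma \in \B$. As noted, both~$t_1$ and~$t_2$ have types different from~$o$, so the induction hypothesis applies to each, giving $\valuation{t_i[x/t_0]}{\M}{u} = \valuation{t_i}{\M}{u'}$ for $i = 1, 2$. Since substitution commutes with application, $t[x/t_0] = t_1[x/t_0] \cdot t_2[x/t_0]$, and the application clause then yields
\[
\valuation{t[x/t_0]}{\M}{u} = \valuation{t_1[x/t_0]}{\M}{u} \cdot \valuation{t_2[x/t_0]}{\M}{u} = \valuation{t_1}{\M}{u'} \cdot \valuation{t_2}{\M}{u'} = \valuation{t}{\M}{u'},
\]
completing the induction.

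The argument is entirely routine and I do not anticipate any serious obstacle. The single point requiring care is the one isolated in the first paragraph, namely verifying that the restriction $\tau' \neq o$ forces~$t$ and all of its subterms to avoid type~$o$, so that $\valuation{}{\M}{u}$ really is given by a compositional structural recursion and the problematic logical constructors never need to be interpreted.
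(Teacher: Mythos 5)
Your proof is correct and follows exactly the route the paper intends: the paper's proof is simply ``straightforward induction on the size of $t$'', and your structural induction with the variable/constant/application cases is that argument spelled out. Your preliminary observation about why $\tau' \ne o$ guarantees that no subterm of $t$ is a formula (so the three compositional clauses of $\valuation{}{\M}{u}$ suffice) is a correct and worthwhile clarification of the point the paper leaves implicit.
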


\begin{proof}
  Straightforward induction on the size of $t$.
\end{proof}

\begin{lemma}\label{r:lem_repres}
  If $\M$ is a Kripke model, $x \in V_\tau$, $t \in T_\tau$, $\varphi \in
  T_{o}$ and $u' = u[x/\valuation{t}{}{u}]$, then for all
  states~$s$:
  \[
  s, u \forces \varphi[x/t] \;\;\;\mathrm{iff}\;\;\; s, u' \forces
  \varphi
  \]
\end{lemma}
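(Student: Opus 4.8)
The plan is to prove this by structural induction on the formula~$\varphi$, for all states~$s$ simultaneously. Because the statement quantifies over every state, the induction hypothesis will be available at every~$s' \ge s$, which is exactly what is needed in the cases for~$\supset$ and~$\forall$. Since $s, u \forces \chi$ just means $s \in \varsigma(\valuation{\chi}{}{u})$, in each case it suffices to relate $\valuation{\varphi[x/t]}{}{u}$ to $\valuation{\varphi}{}{u'}$ (typically by showing they are equal) and then pass through~$\varsigma$.

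For the base cases I would argue as follows. If $\varphi$ is a constant $c \in \Sigma_o$, then $\varphi[x/t] = c$ and $\valuation{c}{}{u} = I(c) = \valuation{c}{}{u'}$. If $\varphi$ is a variable $p \in V_o$ distinct from~$x$, then $\varphi[x/t] = p$ and $\valuation{p}{}{u} = u(p) = u'(p)$, since $u$ and $u'$ agree off~$x$. If $\varphi = x$ (so necessarily $\tau = o$), then $\varphi[x/t] = t$ and $\valuation{x}{}{u'} = u'(x) = \valuation{t}{}{u}$ by the definition of~$u'$. Finally, if $\varphi = t_1 \cdot t_2$ with $t_1 \in T_{\sigma \to o}$, $t_2 \in T_\sigma$ and $\sigma \in \B$, then $t_1$ and $t_2$ both have type different from~$o$, so I would invoke Lemma~\ref{r:lem_repres_0} to get $\valuation{t_i[x/t]}{}{u} = \valuation{t_i}{}{u'}$ for $i = 1,2$, whence $\valuation{\varphi[x/t]}{}{u} = \valuation{t_1}{}{u'} \cdot \valuation{t_2}{}{u'} = \valuation{\varphi}{}{u'}$.

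For $\varphi = \psi_1 \supset \psi_2$ the argument is routine: by the model condition, $s, u \forces (\psi_1 \supset \psi_2)[x/t]$ holds iff for every $s' \ge s$ with $s', u \forces \psi_1[x/t]$ one has $s', u \forces \psi_2[x/t]$; applying the induction hypothesis to~$\psi_1$ and~$\psi_2$ at the state~$s'$ rewrites this directly as $s, u' \forces \psi_1 \supset \psi_2$.

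The main obstacle will be the quantifier case $\varphi = \forall y_\rho . \psi$, where I must be careful about bound variables. Using the identification of $\alpha$-equivalent formulas, I would first assume $y \ne x$ and $y \notin FV(t)$. Then $\varphi[x/t] = \forall y_\rho . (\psi[x/t])$, and by the model condition $s, u \forces \varphi[x/t]$ iff $s', u[y/d] \forces \psi[x/t]$ for all $s' \ge s$ and all $d \in \D_\rho$. Fixing such $s'$ and~$d$ and writing $v = u[y/d]$, the key point is that $\valuation{t}{}{v} = \valuation{t}{}{u}$ because $y \notin FV(t)$, so the valuation $v[x/\valuation{t}{}{v}]$ required by the induction hypothesis equals $u[y/d][x/\valuation{t}{}{u}]$, which in turn equals $u'[y/d]$ since $x \ne y$ lets the two updates commute. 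The induction hypothesis applied to~$\psi$ then gives $s', v \forces \psi[x/t]$ iff $s', u'[y/d] \forces \psi$, and quantifying over all $s' \ge s$ and all $d \in \D_\rho$ turns the condition into $s, u' \forces \forall y_\rho . \psi$, completing the induction.
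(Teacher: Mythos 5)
Your proof is correct and follows essentially the same route as the paper's: induction on the structure of $\varphi$, with Lemma~\ref{r:lem_repres_0} (or direct computation) disposing of the atomic and application cases, and a direct unfolding of the Kripke-model conditions for $\supset$ and $\forall$. You are somewhat more explicit than the paper about the base case $\varphi = x$ and about why $u[y/d][x/\valuation{t}{}{u}] = u'[y/d]$ in the quantifier case, but the argument is the same.
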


\begin{proof}
  We proceed by induction on the size of $\varphi$. If $\varphi$ is a
  constant, a variable, or $\varphi = t_1 t_2$, then the claim follows
  from Lemma~\ref{r:lem_repres_0}.

  Assume $\varphi = \varphi_1 \supset \varphi_2$. Suppose $s, u
  \forces \varphi_1[x/t] \supset \varphi_2[x/t]$ and let $s' \ge s$ be
  such that $s', u' \forces \varphi_1$. By the IH we have $s', u
  \forces \varphi_1[x/t]$, hence $s', u \forces
  \varphi_2[x/t]$. Applying the IH again we obtain $s', u' \forces
  \varphi_2$. This implies that $s, u' \forces \varphi$. The other
  direction is analogous.

  Assume $\varphi = \forall y . \varphi_0$. Without loss of generality
  $y \ne x$ and $y \notin FV(t)$. Suppose $s, u \forces \forall y
  . \varphi_0[x/t]$, and let $s' \ge s$ and $d \in \D_\tau$. We have
  $s', u[y/d] \forces \varphi_0[x/t]$. By the IH we obtain $s',
  u'[y/d] \forces \varphi_0$. This implies $s, u' \forces \forall y
  . \varphi_0$. The other direction is analogous.
\end{proof}

\begin{theorem}
  The conditions $\Delta \forces \varphi$ and $\Delta \proves \varphi$
  are equivalent.
\end{theorem}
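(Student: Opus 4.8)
The plan is to prove the two implications separately: soundness ($\Delta \proves \varphi$ implies $\Delta \forces \varphi$) by induction on derivations, and completeness ($\Delta \forces \varphi$ implies $\Delta \proves \varphi$) by a canonical term-model construction. For soundness I would first record the \emph{monotonicity} fact that $s, u \forces \chi$ together with $s \le s'$ implies $s', u \forces \chi$; this is immediate, since $s, u \forces \chi$ means $s \in \varsigma(\valuation{\chi}{}{u})$ and the image of $\varsigma$ consists of upward-closed sets. Then I induct on the derivation of $\Delta \proves \varphi$. The axiom is trivial. For $\supset_i$, given $s, u \forces \Delta$ and $s' \ge s$ with $s', u \forces \varphi$, monotonicity yields $s', u \forces \Delta, \varphi$, so the induction hypothesis gives $s', u \forces \psi$, and the $\supset$-clause of a Kripke model gives $s, u \forces \varphi \supset \psi$; the rule $\supset_e$ is just that clause instantiated at $s' = s$. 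For $\forall_i$, the side condition $x_\tau \notin FV(\Delta)$ ensures $s', u[x_\tau/d] \forces \Delta$ for all $s' \ge s$ and $d \in \D_\tau$, so the induction hypothesis and the $\forall$-clause close the case; for $\forall_e$ I apply the $\forall$-clause at $s' = s$ with $d = \valuation{t}{}{u}$ and rewrite via the substitution Lemma~\ref{r:lem_repres}. As $\mbox{PRED2}_0$ is intuitionistic, no double-negation case arises.

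For completeness I would build a canonical model refuting any underivable consequence. After expanding the language with a fresh countable supply of parameters of each type (to guarantee an inexhaustible reserve of witnesses), I take the states $\Sc$ to be the consistent sets $\Gamma$ of formulas that omit infinitely many variables of each type, ordered by set inclusion $\subseteq$. The domains are $\D_\tau = T_\tau$ with syntactic application, $I$ is the identity on constants, and the interpretation is syntactic substitution, which I extend to $\supset$ and $\forall$ syntactically (the pre-model constrains $\valuation{-}{}{u}$ only on variables, constants and applications, so this is a legitimate choice); write $\chi^u$ for $\valuation{\chi}{}{u}$. Finally set $\varsigma(\chi) = \{\Gamma \in \Sc \mid \Gamma \proves \chi\}$, each such set being upward-closed under $\subseteq$. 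By construction $\Gamma, u \forces \chi$ holds exactly when $\Gamma \proves \chi^u$.

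The heart of the argument, and the step I expect to be the main obstacle, is verifying that this pre-model satisfies the three Kripke-model clauses. The forward directions are routine, using weakening together with $\supset_e$ and $\forall_e$. The backward directions are delicate. For $\supset$, to obtain $\Gamma \proves (\varphi \supset \psi)^u$ from the semantic condition I case-split on whether $\Gamma \cup \{\varphi^u\}$ is consistent: if it is, it is a state $\ge \Gamma$ forcing $\varphi$, so the hypothesis gives $\Gamma, \varphi^u \proves \psi^u$; if it is not, then $\Gamma, \varphi^u \proves \forall p . p \proves \psi^u$ directly, since $\bot \equiv \forall p.p$ proves every formula by $\forall_e$; in either case $\supset_i$ closes the case. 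For $\forall$, I instantiate the hypothesis at $\Gamma' = \Gamma$ with a witness variable $y \notin FV(\Gamma)$ --- available precisely because states omit infinitely many variables --- obtaining a provable substitution instance whose universal closure follows by $\forall_i$, whose side condition $y \notin FV(\Gamma)$ is now met. The clause $\Gamma \notforces \forall p . p$ holds because states are consistent.

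To conclude, assume $\Delta \notproves \varphi$. Then $\Delta$ is consistent, for otherwise $\Delta \proves \forall p . p \proves \varphi$; and since $\Delta$ contains none of the fresh parameters, it is a state. With the identity valuation $u$, the state $\Gamma_0 := \Delta$ satisfies $\Gamma_0, u \forces \Delta$, whereas $\Gamma_0, u \notforces \varphi$: indeed $\Gamma_0 \proves \varphi$ would, by conservativity of the parameter expansion over the original language, give $\Delta \proves \varphi$. Hence $\Delta \notforces \varphi$, which establishes the contrapositive of completeness and, with soundness, the equivalence.
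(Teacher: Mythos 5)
Your proposal is correct and follows essentially the same route as the paper: soundness by induction on derivations using monotonicity and Lemma~\ref{r:lem_repres}, and completeness via the syntactic canonical model with consistent theories as states, provability as forcing, the consistency case-split for the $\supset$ clause, and a fresh witness variable for the $\forall$ clause. The only (cosmetic) differences are that the paper restricts states to finite extensions of the refuted theory rather than to sets omitting infinitely many variables of each type, and that the fresh witness $y$ should also be required to avoid $FV(\varphi^v)$ so that the $\alpha$-conversion identifying $\forall y.\varphi^v[x/y]$ with $\forall x.\varphi^v$ goes through.
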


\begin{proof}
  By induction on the length of derivation we first show that $\Delta
  \proves \varphi$ implies $\Delta \forces \varphi$. Note that it
  suffices to show this for finite $\Delta$. The implication is
  obvious for the axiom. Assume $\Delta \proves \varphi$ was obtained
  by rule $\forall_i$. Then $\varphi = \forall x . \psi$ for $x \in
  V_\tau$, $x \notin FV(\Delta)$. Let $\M,s,u$ be such that $s,u
  \forces_\M \Delta$. Hence for all $s' \ge s$ we have $s',u
  \forces_\M \Delta$, and $s',u[x/d] \forces_\M \Delta$ for any $d \in
  \D_\tau$ because $x \notin FV(\Delta)$. So by the inductive
  hypothesis we obtain $s',u[x/d] \forces_\M \psi$ for any $d \in
  \D_\tau$. By the definition of a Kripke model, this implies $s,u
  \forces_\M \forall x . \psi$. The remaining cases are equally
  straightforward. Lemma~\ref{r:lem_repres} is needed for the
  rule~$\forall_e$.

  To prove the other direction, we assume that $\Delta_0 \notproves
  \varphi_0$ and construct a Kripke model~$\M$ and a valuation~$u$
  such that for some state~$s$ of~$\M$ we have $s,u \forces_\M
  \Delta_0$, but $s, u \notforces_\M \varphi_0$.

  First, without loss of generality, we assume that there are
  infinitely many variables not occuring in the formulas
  of~$\Delta_0$. We can do this because extending the language with
  infinitely many new variables is conservative. The states of $\M$
  are consistent sets of formulas $\Delta' \supseteq \Delta_0$, i.e.,
  $\Delta' \not\proves \bot$, which differ from~$\Delta_0$ by only
  finitely many formulas. The ordering is by inclusion. For any
  type~$\tau$ as~$\D_\tau$ we take the set of terms of
  type~$\tau$. Let~$v$ be a valuation. Given a term~$t$, we denote
  by~$t^v$ a term obtained from~$t$ by simultaneously substituting any
  variable $x \in FV(t)$ by the term $v(x)$. We obviously assume that
  no variables are captured in these substitutions, which is possible
  because we treat formulas up to $\alpha$-equivalence. We define the
  interpretation~$I$ by $I(c) = c$. We also set $t_1 \cdot t_2 = t_1
  t_2$. Notice that now $\valuation{t}{}{v} = t^v$. Further, we define
  the function~$\varsigma$ of~$\M$ as follows: $\varsigma(\varphi) =
  \{ \Delta \;|\; \Delta \proves \varphi \}$ for a formula~$\varphi$,
  where~$\Delta$ ranges over sets of formulas which are valid
  states. Note that $\Delta, v \forces_\M \varphi$ is now equivalent
  to $\Delta \proves \varphi^v$. Finally, we set $u(x) = x$.

  Given a formula~$\phi$, a state~$\Delta$, and a valuation~$v$, we
  show by induction on the size of~$\phi$ that \mbox{$\Delta, v
    \forces_\M \phi$} satisfies the conditions required for a Kripke
  model. If $\phi = \varphi \supset \psi$, then we need to check that
  \mbox{$\Delta \proves \varphi^v \supset \psi^v$} iff for all
  $\Delta' \supseteq \Delta$ such that~$\Delta'$ is a valid state and
  $\Delta' \proves \varphi^v$, we have $\Delta' \proves
  \psi^v$. Suppose the right side holds and take $\Delta' = \Delta
  \cup \{ \varphi^v \}$. If~$\Delta'$ is a valid state then $\Delta'
  \proves \psi^v$, hence by rule~$\supset_i$ we obtain $\Delta \proves
  \varphi^v \supset \psi^v$. Because~$\Delta$ extends~$\Delta_0$ by
  finitely many formulas, so does~$\Delta'$. Hence if~$\Delta'$ is not
  a valid state, then it is inconsistent. Then obviously $\Delta'
  \proves \psi^v$ anyway, so we again obtain the left side by applying
  rule~$\supset_i$. The other direction follows by
  applying~$\supset_e$ and weakening finitely many times.

  Similarly, if $\psi = \forall x . \varphi$, then without loss of
  generality we assume $v(x) = x$, $x \in V_\tau$, and check that
  $\Delta \proves \forall x . \varphi^v$ iff for all valid states
  $\Delta' \supseteq \Delta$ and all $t_1 \in \D_\tau$ we have
  $\Delta' \proves \varphi^{v'}$ where $v' = v[x/t_1]$. If the right
  side of the equivalence holds, then it holds in particular for $t_1
  = y$ such that $y \notin FV(\Delta, \varphi^v)$, and $\Delta' =
  \Delta$. Such~$y$ exists, because we have assumed an infinite number
  of variables not occuring in the formulas of~$\Delta_o$,
  and~$\Delta$ extends~$\Delta_o$ by only finitely many formulas. By
  rule~$\forall_i$ we obtain $\Delta \proves \forall_y \varphi^v$,
  which is $\alpha$-equivalent to the left side, and we treat
  $\alpha$-equivalent formulas as identical. Conversely, if $\Delta
  \proves \forall x . \varphi^v$, then by rule~$\forall_e$ and
  weakening we obtain $\Delta' \proves \varphi^v[x/t_1]$. This is
  equivalent to $\Delta' \proves \varphi^{v'}$ where $v' = v[x/t_1]$.

  It is now a matter of routine to check that~$\M$ is a Kripke
  model. Obviously, in this model we have $\Delta_0, u \notforces
  \varphi_0$, i.e., $\Delta_0 \notin \valuation{\varphi_0}{}{u} =
  \varsigma(\varphi_0)$, because $\Delta_0 \notproves \varphi_0$. On
  the other hand, $\Delta_0, u \forces \psi$ for every $\psi \in
  \Delta_0$. This proves the theorem.
\end{proof}

\section{Illative systems}\label{r:sec_illat}

In this section we define the higher-order illative systems
$\I_\omega$, $\I_\omega^c$ and the second-order illative system
$\I_0$. We also define a semantics for these systems.

\begin{definition}\label{r:def_illat}
  By $\T(\Sigma)$ we denote the set of type-free lambda-terms over
  some specific set $\Sigma$ of primitive constants, which is assumed
  to contain $\Xi$, $L$ and $A_\tau$ for each $\tau \in \B$ where $\B$
  is some specific set of base types.

  We use the following abbreviations. The term $\supset$ is usually
  written in infix notation and is assumed to be right-associative.
  \begin{eqnarray*}
    I &=& \lambda x . x \\
    S &=& \lambda x y z . x z (y z) \\
    K &=& \lambda x y . x \\
    H &=& \lambda x . L (K x) \\
    \supset &=& \lambda x y . \Xi (K x) (K y) \\
%%    \bot &=& \Xi H I \\
%%    \top &=& \bot \supset \bot \\
    F &=& \lambda x y f . \Xi x \left(\lambda z . y \left(f
    z\right)\right)
%%     \wedge &=& \lambda x y . \Xi H \lambda z . (x \supset y \supset z)
%%     \supset z \\
%%     \vee &=& \lambda x y . \Xi H \lambda z . (x \supset z) \supset (y
%%     \supset z) \supset z \\
    %\nabla &=& \lambda x y . \Xi H \lambda z . \Xi x \lambda v . (y v
    %\supset z) \supset z
  \end{eqnarray*}

  The constant~$\Xi$ functions as a restricted quantification
  operator, i.e., $\Xi A B$ is intuitively interpreted as $\forall x
  . A x \supset B x$. The intended interpretation of~$L A$ is ``$A$ is
  a type'', or ``$A$ may be a range of quantification''. The term~$H$
  stands for the ``type'' of propositions, and $F A B$ denotes the
  ``type'' of functions from~$A$ to~$B$. The constants $A_\tau$ denote
  base types, i.e., different sorts of individuals. We use a notion of
  types informally in this section.

  For systems of illative combinatory logic, judgements have the form
  $\Gamma \proves t$ where $\Gamma$ is a finite subset of $\T(\Sigma)$
  and $t \in \T(\Sigma)$. The notation $\Gamma, t$ is an abbreviation
  for $\Gamma \cup \{t\}$.

  \medskip

  The system $\I_\omega$ is defined by the following axioms and rules.
  \medskip

  {\bf Axioms}
  \begin{enumerate}
  \item $\Gamma, t \proves t$
  \item $\Gamma \proves L H$
  \item $\Gamma \proves L A_\tau$ for $\tau \in \B$
  \end{enumerate}

  {\bf Rules}
  %%\begin{center}
  \begin{longtable}{cc}
    \(
    {\mbox{Eq}:}\; \inferrule{\Gamma \proves t_1 \\ t_1 =_{\beta\eta} t_2}{\Gamma \proves t_2}
    \)
    &
    \(
    {H_i:}\; \inferrule{\Gamma \proves t}{\Gamma \proves H t}
    \)
    \\
    & \\
    \multicolumn{2}{l}{
      \(
      {\Xi_e:}\; \inferrule{\Gamma \proves \Xi t_1 t_2 \\ \Gamma \proves t_1 t_3}{\Gamma \proves t_2 t_3}
      \)
    }
    \\
    & \\
    \multicolumn{2}{l}{
      \(
      {\Xi_i:}\; \inferrule{\Gamma, t_1 x \proves t_2 x \\ \Gamma \proves L t_1}{\Gamma
        \proves \Xi t_1 t_2}\; x \notin FV(\Gamma, t_1, t_2)
      \)
    }
    \\
    & \\
    \multicolumn{2}{l}{
      \(
      {\Xi_H:}\; \inferrule{\Gamma, t_1 x \proves H(t_2 x) \\ \Gamma \proves L
        t_1}{\Gamma \proves H(\Xi t_1 t_2)}\; x \notin FV(\Gamma, t_1, t_2)
      \)
    }
    \\
    & \\
    \multicolumn{2}{l}{
      \(
      {F_L:}\; \inferrule{\Gamma, t_1 x \proves L t_2 \\ \Gamma
        \proves L t_1}{\Gamma \proves L (F t_1 t_2)}\; x \notin
      FV(\Gamma, t_1, t_2)
      \)
    }
  \end{longtable}
  %%\end{center}

  The system $\I_\omega^c$ is $\I_\omega$ plus the axiom of double
  negation:
  \[
  \Gamma \proves \Xi H \left(\lambda x . \left(\left(x \supset \bot\right)
  \supset \bot\right) \supset x\right)
  \]
  where $\bot = \Xi H I$.\footnote{Note that here the symbol~$\bot$ is
    an abbreviation for a term in the syntax of~$\I_\omega$, which is
    distinct from previous uses of~$\bot$.}

  The system $\I_0$ is $\I_\omega$ minus the rule $F_L$. The rule
  $F_L$ allows us to quantify over functions and
  predicates. Obviously, the system becomes more useful if for $\tau
  \in \B$ we can add constants~$c$ representing some elements of
  type~$\tau$, axioms $A_\tau c$, and some axioms of the form e.g. $p
  (f c_1) (g c_2)$ where $f$, $g$ are constants representing functions
  and $p$ is a predicate constant (i.e.~of type $\tau_1\to\tau_2\to
  o$). That most such simple extensions are consistent with
  $\I_\omega$ is a consequence of the model construction in
  Section~\ref{r:sec_construction}.

  For an arbitrary set $\Gamma$, we write $\Gamma \proves_{\I} t$ if
  there is a finite subset $\Gamma' \subseteq \Gamma$ and a derivation
  of $\Gamma' \proves t$ in an illative system $\I$. The subscript is
  dropped when obvious from the context.
\end{definition}

\begin{lemma}\label{r:lem_illat_admissible}
  The following rules are admissible in $\I_\omega$ and $\I_0$.
  \begin{longtable}{cc}
    \(
    {P_e:}\; \inferrule{\Gamma \proves t_1 \supset t_2 \\ \Gamma
      \proves t_1}{\Gamma \proves t_2}
    \)
    &
    \(
    {P_i:}\; \inferrule{\Gamma, t_1 \proves t_2 \\ \Gamma \proves H
      t_1 }{\Gamma \proves t_1 \supset t_2}
    \)
    \\
    & \\
      \(
      {P_H:}\; \inferrule{\Gamma, t_1 \proves H t_2 \\ \Gamma \proves
        H t_1}{\Gamma \proves H (t_1 \supset t_2)}
      \)
      &
      \(
      {\mathrm{Weak}:}\; \inferrule{\Gamma \proves t}{\Gamma, t' \proves t}
      \)
  \end{longtable}
\end{lemma}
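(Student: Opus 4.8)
The plan is to derive each of the four admissible rules from the primitive axioms and rules of $\I_\omega$, using the abbreviations $\supset$ and $H$ unfolded according to Definition~\ref{r:def_illat}. Recall that $t_1 \supset t_2$ abbreviates $\Xi (K t_1) (K t_2)$ and that $H t$ abbreviates $L (K t)$. Since $\mathrm{Weak}$ is the simplest, I would dispatch it first: by the definition at the start of Section~\ref{r:sec_illat}, $\Gamma \proves t$ means some finite $\Gamma' \subseteq \Gamma$ derives $t$, and the same derivation witnesses $\Gamma, t' \proves t$; alternatively one observes that every axiom and every rule is insensitive to enlarging the context, so weakening is built into the formulation. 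This also licenses the free use of weakening in the remaining three derivations.

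For $P_e$ (modus ponens for $\supset$), I would start from $\Gamma \proves t_1 \supset t_2$, i.e. $\Gamma \proves \Xi (K t_1)(K t_2)$, and from $\Gamma \proves t_1$. The rule $\Xi_e$ requires a premise of the shape $\Gamma \proves (K t_1) t_3$ for some $t_3$. Since $(K t_1) t_3 =_{\beta\eta} t_1$ for any $t_3$, I can pick any term $t_3$, apply $\mathrm{Eq}$ to turn $\Gamma \proves t_1$ into $\Gamma \proves (K t_1) t_3$, then invoke $\Xi_e$ to obtain $\Gamma \proves (K t_2) t_3$, and finally apply $\mathrm{Eq}$ again, using $(K t_2) t_3 =_{\beta\eta} t_2$, to conclude $\Gamma \proves t_2$. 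The analogous strategy handles $P_i$ and $P_H$, except that there the relevant introduction rules ($\Xi_i$ and $\Xi_H$) carry a side condition $x \notin FV(\Gamma, t_1, t_2)$ and a premise $\Gamma \proves L t_1$, which in the $\supset$/$H$ case becomes $\Gamma \proves L (K t_1)$, i.e. precisely $\Gamma \proves H t_1$. So $P_i$ follows by: choosing a fresh variable $x$, converting the hypothesis $\Gamma, t_1 \proves t_2$ via $\mathrm{Eq}$ into $\Gamma, (K t_1) x \proves (K t_2) x$ (valid since $(K t_1) x =_{\beta\eta} t_1$ and $(K t_2) x =_{\beta\eta} t_2$), supplying $\Gamma \proves L (K t_1)$ from the premise $\Gamma \proves H t_1$, and applying $\Xi_i$ to get $\Gamma \proves \Xi (K t_1)(K t_2)$, which is $\Gamma \proves t_1 \supset t_2$. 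The derivation of $P_H$ is identical in structure but uses $\Xi_H$ in place of $\Xi_i$, yielding $\Gamma \proves H (\Xi (K t_1)(K t_2))$, i.e. $\Gamma \proves H(t_1 \supset t_2)$.

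The one point demanding genuine care is the freshness side condition on $\Xi_i$ and $\Xi_H$. The rules require $x \notin FV(\Gamma, t_1, t_2)$, so before applying them I must exhibit a variable $x$ not occurring free in $\Gamma$, $K t_1$, or $K t_2$ — equivalently not free in $\Gamma$, $t_1$, $t_2$ — which is always possible since contexts and terms are finite and the variable supply is infinite. I then need the $\beta\eta$-conversions $(K t_1) x =_{\beta\eta} t_1$ and $(K t_2) x =_{\beta\eta} t_2$ to justify the $\mathrm{Eq}$-steps that reshape the hypothesis $\Gamma, t_1 \proves t_2$ into the form demanded by the premise $\Gamma, (K t_1) x \proves (K t_2) x$; these hold because $K = \lambda x y . x$ reduces $K t\, s$ to $t$. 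Since all four derivations are uniform in $\I_\omega$ and $\I_0$ — neither $P_e$, $P_i$, $P_H$, nor $\mathrm{Weak}$ appeals to the rule $F_L$, which is the sole difference between the two systems — the same argument establishes admissibility in both systems simultaneously, so no separate treatment of $\I_0$ is needed.
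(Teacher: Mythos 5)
The paper's own proof of this lemma is literally the single word ``Routine,'' so there is no detailed argument to compare against; your derivations of $\mathrm{Weak}$ and $P_e$ are exactly the intended ones and are correct as written. There is, however, one step in your treatment of $P_i$ and $P_H$ that does not go through as stated: you claim to convert the hypothesis $\Gamma, t_1 \proves t_2$ ``via $\mathrm{Eq}$'' into $\Gamma, (K t_1)x \proves (K t_2)x$. The rule $\mathrm{Eq}$ only rewrites the term to the \emph{right} of the turnstile; it says nothing about replacing a member of $\Gamma$ by a $\beta\eta$-equal term. So while the passage from conclusion $t_2$ to $(K t_2)x$ is a legitimate $\mathrm{Eq}$-step, the passage from hypothesis $t_1$ to $(K t_1)x$ is not an instance of any primitive rule.

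To close this you need an auxiliary admissible principle, e.g.\ ``if $\Gamma, t \proves r$ and $t =_{\beta\eta} t'$ then $\Gamma, t' \proves r$,'' proved by induction on the derivation (the only place a hypothesis is consumed is the axiom $\Gamma, t \proves t$, which one repairs with an $\mathrm{Eq}$-step on the conclusion), or equivalently a cut rule: from $\Gamma, (K t_1)x \proves (K t_1)x$ and $\mathrm{Eq}$ one gets $\Gamma, (K t_1)x \proves t_1$, and then a substitution-of-derivations argument combines this with $\Gamma, t_1 \proves t_2$. Either route is routine but requires its own induction, and in your specific instance the replacement term $(K t_1)x$ introduces the \emph{new} free variable $x$ into the context, so one must also rename eigenvariables of inner $\Xi_i$/$\Xi_H$/$F_L$ applications to avoid clashes before finally discharging the outer side condition $x \notin FV(\Gamma, K t_1, K t_2)$. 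With that lemma in place the rest of your argument, including the observation that $H t_1 =_{\beta\eta} L(K t_1)$ supplies the second premise of $\Xi_i$/$\Xi_H$ and that none of the derivations uses $F_L$ (so both $\I_\omega$ and $\I_0$ are covered), is correct.
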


\begin{proof}
  Routine.
\end{proof}

%% \begin{lemma}\label{r:lem_var_subst}
%%   If $\Gamma \proves t$ then $\Gamma[x/t'] \proves t[x/t']$ for any
%%   term $t'$.
%% \end{lemma}

%% \begin{proof}
%%   Straightforward induction on the length of derivation.
%% \end{proof}

\begin{definition}
  A \emph{combinatory algebra} $\C$ is a tuple $\langle C, \cdot, S, K
  \rangle$, where $\cdot$ is a binary operation in $C$ and $S, K \in
  C$, such that for any $X, Y, Z \in C$ we have:
  \begin{itemize}
  \item $S \cdot X \cdot Y \cdot Z = (X \cdot Z) \cdot (Y \cdot Z)$,
  \item $K \cdot X \cdot Y = X$.
  \end{itemize}
  To save on notation we often write $X \in \C$ instead of $X \in
  C$. We assume $\cdot$ associates to the left, and sometimes omit it.

  A combinatory algebra is \emph{extensional} if for any $M_1, M_2 \in
  \C$, whenever for all $X \in \C$ we have $M_1 X = M_2 X$, then we
  also have $M_1 = M_2$.

  It is well-known that any combinatory algebra contains a fixed-point
  combinator and satisfies the principle of combinatory abstraction,
  so any equation of the form $z \cdot x = \Phi(z, x)$, where $\Phi(z,
  x)$ is an expression involving the variables $z$, $x$ and some
  elements of~$\C$, has a solution for $z$ satifying this equation for
  arbitrary $x$.
\end{definition}

\begin{definition}\label{r:def_ikm}
  An \emph{illative Kripke pre-model} for an illative system~$\I$ ($\I
  \in \{\I_\omega, \I_\omega^c, \I_0\}$) with primitive
  constants~$\Sigma$, is a tuple $\langle \Sc, \le, \C, I, \varsigma
  \rangle$, where $\Sc$ is a set of states, $\le$ is a partial order
  on the states, $\C$ is an extensional combinatory algebra, $I :
  \Sigma \rightarrow \C$ is an interpretation of primitive constants,
  and $\varsigma$ is a function assigning upward-closed (w.r.t. $\le$)
  subsets of~$\Sc$ to elements of~$\C$. We sometimes write
  $\sigma_\M$, $\Sc_\M$, etc., to stress that they are components of
  $\M$.

  Given an illative Kripke pre-model $\M$, the value
  $\valuation{t}{\M}{u}$ of term $t$ under valuation $u$, which is a
  function from variables to $\C$, is defined inductively:
  \begin{itemize}
  \item $\valuation{x}{}{u} = u(x)$ for a variable $x$,
  \item $\valuation{c}{}{u} = I(c)$ for a constant $c$,
  \item $\valuation{t_1 t_2}{}{u} = \valuation{t_1}{}{u} \cdot
    \valuation{t_2}{}{u}$,
  \item $\valuation{\lambda x . t}{}{u}$ is the element $d \in \C$
    satisfying $d \cdot d' = \valuation{t}{}{u'}$ for any $d' \in
    \C$, where $u' = u[x/d']$.
  \end{itemize}
  Note that the element in the last point is uniquely defined because
  of extensionality and combinatorial completeness of $\C$.

  To save on notation, we often confuse $\Xi$, $L$, etc. with
  $\valuation{\Xi}{\M}{u}$, $\valuation{L}{\M}{u}$, etc. The intended
  meaning is always clear from the context. The subscript $\M$ is also
  often dropped.

  Intuitively, for $X \in \C$ the set $\varsigma(X)$ is the set of all
  states~$s$ such that the element~$X$ is true in~$s$. The
  relation~$\le$ on states is analogous to an accessibility relation
  in a Kripke frame.

  An \emph{illative Kripke model} for $\I_\omega$ is an illative
  Kripke pre-model where $\varsigma$ satisfies the following conditions
  for any $X, Y \in \C$:
  \begin{enumerate}
  \item if $s \in \varsigma(L X)$ and for all $s' \ge s$ and all $Z \in
    \C$ such that $s' \in \varsigma(X Z)$ we have $s' \in \varsigma(Y
    Z)$, then $s \in \varsigma(\Xi X Y)$, \label{r:ikm_1}
  \item if $s \in \varsigma(\Xi X Y)$ then for all $Z \in \C$ such that
    $s \in \varsigma(X Z)$ we have $s \in \varsigma(Y Z)$, \label{r:ikm_2}
  \item if $s \in \varsigma(L X)$ and for all $s' \ge s$ and all $Z \in
    \C$ such that $s' \in \varsigma(X Z)$ we have $s' \in \varsigma(H(Y
    Z))$, then $s \in \varsigma(H(\Xi X Y))$, \label{r:ikm_3}
  \item if $s \in \varsigma(L X)$ and for all $s' \ge s$ such that $s' \in
    \varsigma(X Z)$ for some $Z \in \C$, we have $s' \in \varsigma(L Y)$, then
    $s \in \varsigma(L(F X Y))$, \label{r:ikm_4}
  \item if $s \in \varsigma(X)$ then $s \in \varsigma(H X)$, \label{r:ikm_5}
  \item $s \in \varsigma(L H)$, \label{r:ikm_6}
  \item $s \in \varsigma(L A_\tau)$ for $\tau \in \B$. \label{r:ikm_7}
  \end{enumerate}
  An illative Kripke model for $\I_0$ is defined analogously, but
  omitting condition~(\ref{r:ikm_4}). A model is a \emph{classical
    illative model} if it satisfies the law of double negation: if $s
  \in \varsigma(H X)$ and $s \in \varsigma((X \supset \bot) \supset
  \bot)$ then $s \in \varsigma(X)$, where $\bot = \Xi H I$. It is not
  difficult to see that every one-state illative Kripke model is a
  classical illative model. For a classical illative model with a
  single state $s$ we define the set $\Ts$ of \emph{true elements} by
  $\Ts = \{ X \in \C \;|\; s \in \varsigma(X) \}$. Note that
  $\varsigma(X)$ may be empty.

  For a term $t$ and a valuation~$u$, we write $s, u \forces_{\M} t$
  whenever $s \in \varsigma(\valuation{t}{\M}{u})$. For a set of terms
  $\Gamma$, we write $\Gamma \forces_{\I} t$ if for all Kripke models
  $\M$ of an illative system $\I$, all states $s$ of $\M$, and all
  valuations $u$ such that $s,u \forces_{\M} t'$ for all $t' \in
  \Gamma$, we have $s,u \forces_{\M} t$. Note that $s,u \forces_{\M}
  t$ implies $s',u \forces_{\M} t$ for $s' \ge s$, because $\varsigma(X)$
  is always an upward-closed subset of $\Sc$, for any argument $X$.
\end{definition}

Informally, one may think of illative Kripke models as combinatory
algebras with an added structure of a Kripke frame.

\begin{fact}\label{r:fact_supset_illat}
  In any illative Kripke model the following conditions are satisfied:
  \begin{enumerate}
  \item if $s \in \varsigma(H X)$ and for all $s' \ge s$ such that $s'
    \in \varsigma(X)$ we have $s' \in \varsigma(Y)$, then $s \in \varsigma(X
    \supset Y)$, \label{r:cond_supset_illat_01}
  \item if $s \in \varsigma(X \supset Y)$ then $s \in \varsigma(X)$ implies
    $s \in \varsigma(Y)$, \label{r:cond_supset_illat_02}
  \item if $s \in \varsigma(H X)$ and for all $s' \ge s$ such that $s'
    \in \varsigma(X)$ we have $s' \in \varsigma(H Y)$, then $s \in
    \varsigma(H(X \supset Y))$. \label{r:cond_supset_illat_03}
  \end{enumerate}
\end{fact}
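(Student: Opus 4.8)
The plan is to unfold the abbreviation $\supset = \lambda x y . \Xi (K x)(K y)$, so that for $X, Y \in \C$ the element $X \supset Y$ is literally $\Xi (K X)(K Y)$, and then to deduce each of the three claims from the corresponding model condition (\ref{r:ikm_1}), (\ref{r:ikm_2}) or (\ref{r:ikm_3}), applied with the arguments $K X$ and $K Y$ in place of $X$ and $Y$. Before doing this I would record two semantic identities valid in any extensional combinatory algebra under the valuation of Definition~\ref{r:def_ikm}: first, $(K X) \cdot Z = X$ for every $Z \in \C$; and second, $H X = L (K X)$, the latter because $H = \lambda x . L (K x)$. Both are immediate from the defining equation $\valuation{\lambda x . t}{}{u} \cdot d' = \valuation{t}{}{u'}$ for abstractions (with $u' = u[x/d']$) together with the combinatory-algebra axiom for $K$. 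The key consequence of the first identity is that the sets $\varsigma((K X) Z)$ and $\varsigma((K Y) Z)$ do not depend on $Z$: they are simply $\varsigma(X)$ and $\varsigma(Y)$. Hence the universally quantified variable $Z$ appearing in the model conditions collapses, and the inner condition ``for all $Z$ such that $s' \in \varsigma((K X) Z)$ we have $s' \in \varsigma((K Y) Z)$'' becomes just ``if $s' \in \varsigma(X)$ then $s' \in \varsigma(Y)$''.

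With these identities in hand, parts~(\ref{r:cond_supset_illat_01}) and~(\ref{r:cond_supset_illat_03}) of the Fact follow directly. For~(\ref{r:cond_supset_illat_01}) I would apply condition~(\ref{r:ikm_1}) with $K X$, $K Y$: its first premise $s \in \varsigma(L(K X))$ is exactly the hypothesis $s \in \varsigma(H X)$ by the identity $H X = L(K X)$, and its second premise reduces, via the collapse of $Z$, to the hypothesis that $s' \in \varsigma(X)$ implies $s' \in \varsigma(Y)$ for all $s' \ge s$; the conclusion $s \in \varsigma(\Xi (K X)(K Y))$ is then $s \in \varsigma(X \supset Y)$. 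Part~(\ref{r:cond_supset_illat_03}) is identical but uses condition~(\ref{r:ikm_3}), whose conclusion $s \in \varsigma(H(\Xi (K X)(K Y)))$ is $s \in \varsigma(H(X \supset Y))$.

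For part~(\ref{r:cond_supset_illat_02}) I would invoke condition~(\ref{r:ikm_2}) with $K X$, $K Y$: from $s \in \varsigma(X \supset Y) = \varsigma(\Xi (K X)(K Y))$ it yields that for all $Z$ with $s \in \varsigma((K X) Z)$ we have $s \in \varsigma((K Y) Z)$. Choosing any $Z \in \C$ (the algebra is nonempty, since $K \in \C$) and using the identities above, $s \in \varsigma(X)$ gives $s \in \varsigma((K X) Z)$, hence $s \in \varsigma((K Y) Z) = \varsigma(Y)$, as required. I do not expect any genuine obstacle here: the statement is a routine translation of the $\Xi$-conditions through the definition of $\supset$. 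The only points needing care are the verification of the two semantic identities $(K X) Z = X$ and $H X = L(K X)$ at the level of the combinatory algebra, and the observation that the $Z$-quantifier in the model conditions becomes vacuous because $K X$ and $K Y$ are constant functions.
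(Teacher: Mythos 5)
Your proof is correct and is precisely the routine unfolding the paper leaves implicit (the Fact is stated without proof): expand $X \supset Y$ as $\Xi(K X)(K Y)$ and $H X$ as $L(K X)$, note $(K X)\cdot Z = X$ so the $Z$-quantifier in conditions (1)--(3) of Definition~\ref{r:def_ikm} collapses, and instantiate those conditions at $K X$, $K Y$. Your attention to the two small points that need checking --- the algebra-level identities and the nonemptiness of $\C$ when instantiating $Z$ in part~(\ref{r:cond_supset_illat_02}) --- is exactly right.
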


%\vspace{-1.5em}

\begin{theorem}\label{r:thm_ikm_complete}
  The conditions $\Gamma \forces_{\I} t$ and $\Gamma \proves_{\I} t$
  are equivalent, where $\I = \I_\omega$ or $\I = \I_0$.
\end{theorem}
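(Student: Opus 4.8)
The statement is a soundness and completeness result for the illative Kripke semantics, so the plan is to prove the two implications separately, the semantic-from-syntactic direction (soundness) and the syntactic-from-semantic direction (completeness).

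\emph{Soundness} $(\Gamma \proves_\I t \Rightarrow \Gamma \forces_\I t)$. Since every derivation uses only a finite subcontext, it suffices to treat finite $\Gamma$ and then induct on the length of the derivation. Each axiom and rule is matched to exactly one defining condition on $\varsigma$: Axiom~(1) is immediate, Axioms~(2),(3) correspond to conditions~(\ref{r:ikm_6}),(\ref{r:ikm_7}), rule $H_i$ to~(\ref{r:ikm_5}), rule $\Xi_e$ to~(\ref{r:ikm_2}), and rules $\Xi_i$, $\Xi_H$, $F_L$ to conditions~(\ref{r:ikm_1}),(\ref{r:ikm_3}),(\ref{r:ikm_4}) respectively. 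Rule Eq is handled by first recording the standard fact that in an extensional combinatory algebra $\beta\eta$-equal terms receive equal valuations, so $\valuation{t_1}{}{u}=\valuation{t_2}{}{u}$ whenever $t_1 =_{\beta\eta} t_2$. The only rules needing care are those with a fresh variable. For these I would use a valuation-shift argument: fixing $\M,s,u$ with $s,u \forces \Gamma$, and given $s' \ge s$ and $Z \in \C$, pass to $u' = u[x/Z]$; since $x \notin FV(\Gamma,t_1,t_2)$ we get $s',u' \forces \Gamma$, while $\valuation{t_1 x}{}{u'} = \valuation{t_1}{}{u}\cdot Z$ and $\valuation{t_2 x}{}{u'} = \valuation{t_2}{}{u}\cdot Z$, which turns the premise supplied by the induction hypothesis into precisely the antecedent of the relevant model condition.

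\emph{Completeness} $(\Gamma \forces_\I t \Rightarrow \Gamma \proves_\I t)$. I would argue the contrapositive, building a term model in close analogy with the completeness proof for $\mbox{PRED2}_0$. Assume $\Gamma_0 \notproves_\I t_0$ and, as there, extend the language with infinitely many fresh variables (a conservative move). Take $\C$ to be the open term model $\T(\Sigma)/{=_{\beta\eta}}$, with application inherited from term application and $S,K$ the usual combinators; this is an extensional combinatory algebra ($\eta$ supplies extensionality) whose lambda-valuation coincides with substitution, so under the identity valuation $\valuation{t}{}{u} = [t]$. As states take all sets $\Gamma' \supseteq \Gamma_0$ differing from $\Gamma_0$ by finitely many terms, ordered by inclusion; set $I(c) = [c]$, $u(x) = [x]$, and $\varsigma([t]) = \{\,\Gamma' \mid \Gamma' \proves_\I t\,\}$. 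Rule Eq makes $\varsigma$ well-defined on $\beta\eta$-classes and rule Weak makes each $\varsigma([t])$ upward-closed.

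The heart of the argument is verifying that this pre-model is an illative Kripke model, i.e. that $\varsigma$ satisfies conditions~(\ref{r:ikm_1})--(\ref{r:ikm_7}) (omitting~(\ref{r:ikm_4}) for $\I_0$). Conditions~(\ref{r:ikm_2}),(\ref{r:ikm_5})--(\ref{r:ikm_7}) are immediate from $\Xi_e$, $H_i$, and the two $L$-axioms. For the introduction-style conditions~(\ref{r:ikm_1}),(\ref{r:ikm_3}),(\ref{r:ikm_4}), given a state $\Gamma'$ with $\Gamma' \proves L t_1$ and the stated hypothesis, I would instantiate that hypothesis at the state $\Gamma'' = \Gamma' \cup \{t_1 x\}$ (a legitimate state, since no consistency requirement is imposed) and at $Z = [x]$ for $x$ fresh for $\Gamma',t_1,t_2$: then $\Gamma'' \proves t_1 x$ by Axiom~(1), the hypothesis yields $\Gamma'' \proves t_2 x$ (resp. $H(t_2 x)$, $L t_2$), and one application of $\Xi_i$ (resp. $\Xi_H$, $F_L$) gives $\Gamma' \proves \Xi t_1 t_2$ (resp. $H(\Xi t_1 t_2)$, $L(F t_1 t_2)$). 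Finally $\Gamma_0, u \forces \psi$ for every $\psi \in \Gamma_0$ while $\Gamma_0, u \notforces t_0$ exactly because $\Gamma_0 \notproves t_0$, witnessing $\Gamma_0 \notforces t_0$.

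The main obstacle is this completeness direction, and within it the verification that the syntactic $\varsigma$ validates the introduction-style conditions~(\ref{r:ikm_1}),(\ref{r:ikm_3}),(\ref{r:ikm_4}): one must check that a fresh variable is always available after the language extension (this is why states differ from $\Gamma_0$ by only finitely many terms), that the required derivations succeed for arbitrary representatives of the $\beta\eta$-classes $X,Y$, and that none of the model conditions forces a state to refute any term, so that dropping the consistency constraint on states is harmless. Confirming that the open term model is a genuinely extensional combinatory algebra whose abstraction valuation agrees with substitution is the other technical point, though it is entirely standard.
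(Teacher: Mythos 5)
Your proposal is correct and follows essentially the same route as the paper: soundness by induction on derivations, matching each rule to its model condition and using the valuation shift $u'=u[x/Z]$ for the fresh-variable rules, and completeness via the term model over $\beta\eta$-classes with states the finite extensions of $\Gamma_0$, $\varsigma$ defined by derivability, and the introduction conditions verified by instantiating at $\Gamma'\cup\{t_1 x\}$ with a fresh variable. The details you flag as needing care (availability of fresh variables, well-definedness on $\beta\eta$-classes via rule Eq, absence of a consistency requirement on states) are exactly the points the paper's proof addresses.
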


\begin{proof}
  We first check that $\Gamma \proves_{\I} t$ implies $\Gamma
  \forces_{\I} t$, by a simple induction on the length of
  derivation. It suffices to prove this for finite $\Gamma$. The
  implication is immediate for the axioms. Now assume $\Gamma \proves
  t_2 t$ was obtained by rule $\Xi_e$, and we have $s,u \forces_{\M}
  \Gamma$. Hence, by the inductive hypothesis $s,u \forces_{\M} \Xi
  t_1 t_2$ and $s,u \forces_\M t_1 t$, which by
  condition~(\ref{r:ikm_2}) in Definition~\ref{r:def_ikm} implies $s,u
  \forces_{\M} t_2 t$. Assume $\Gamma \proves \Xi t_1 t_2$ was
  obtained by rule~$\Xi_i$, and that $s,u \forces_{\M} \Gamma$. Let
  $s' \ge s$ and $Z \in \C$ be such that $s' \in
  \varsigma(\valuation{t_1}{\M}{u} \cdot Z)$. We therefore have $s', u'
  \forces_{\M} \Gamma, t_1 x$, where $u' = u[x/Z]$ and $x \notin
  FV(\Gamma, t_1, t_2)$. So by the inductive hypothesis we obtain $s',
  u' \forces_{\M} t_2 x$. Because $x \notin FV(t_2)$, this is
  equivalent to $s' \in \varsigma(\valuation{t_2}{\M}{u} \cdot Z)$. The
  inductive hypothesis implies also that $s \in \varsigma(L \cdot
  \valuation{t_1}{\M}{u})$. We therefore obtain by
  condition~(\ref{r:ikm_1}) in Definition~\ref{r:def_ikm} that $s, u
  \forces_{\M} \Xi t_1 t_2$. The other cases are equally
  straightforward and we leave them to the reader. In the case of
  rule~\mbox{Eq} the extensionality of~$\C$ is needed.

  To prove the other direction, we assume $\Gamma_0 \notproves_{\I}
  t_0$, and construct an illative Kripke model $\M$ and a valuation
  $u$ such that for some state $s$ of $\M$ we have $s, u \forces_{\M}
  \Gamma_0$, but $s,u \notforces_{\M} t_0$.

  We construct the model as follows. First of all, we assume without
  loss of generality that there are infinitely many variables not
  occuring in~$\Gamma_0$. As states we take all sets of terms $\Gamma'
  \supseteq \Gamma_0$ which extend~$\Gamma_0$ by only finitely many
  formulas. The ordering is by inclusion. The combinatory algebra $\C$
  is the set of equivalence classes of $\beta\eta$-equality on
  $\T(\Sigma)$. We denote the equivalence class of a term $t$ by
  $[t]_{\beta\eta}$. We define $I(c) = [c]_{\beta\eta}$ for $c \in
  \Sigma$. The function~$\varsigma$ is defined by the condition:
  $\Gamma \in \varsigma([t]_{\beta\eta})$ iff $\Gamma \proves_{\I} t$
  and $\Gamma$ is a valid state. This is well-defined because of
  $\beta\eta$-equality in rule~\mbox{Eq}. The valuation~$u$ is defined
  by $u(x) = [x]_{\beta\eta}$. Note that $\valuation{t}{\M}{u} =
  [t]_{\beta\eta}$.

  We now show that this is an illative Kripke model. We only need to
  check the conditions on $\varsigma$. It is obvious that
  $\varsigma(X)$ is upward-closed for any $X \in \C$ because of
  weakening. Assume that $\Gamma \proves L t_1$, and for all $\Gamma'
  \supseteq \Gamma$ and all terms $t_3$ such that $\Gamma' \proves t_1
  t_3$ we have $\Gamma' \proves t_2 t_3$. Then, in particular, this
  holds for $\Gamma' = \Gamma \cup \{t_1 x\}$ and $t_3 = x$, where $x$
  is a variable, $x \notin FV(\Gamma, t_1, t_2)$. Such a variable~$x$
  exists because~$\Gamma$ differs from~$\Gamma_0$ by only finitely
  many formulas, and there are infinitely many variables not occuring
  in the formulas of~$\Gamma_0$. Therefore, by rule $\Xi_i$ we have
  $\Gamma \proves \Xi t_1 t_2$, hence $\Gamma \in \varsigma([\Xi t_1
    t_2]_{\beta\eta})$. This verifies
  condition~(\ref{r:ikm_1}). Conditions~(\ref{r:ikm_2}),
  (\ref{r:ikm_3}), (\ref{r:ikm_4}) and (\ref{r:ikm_5}) are verified in
  a similar manner, using rules $\Xi_e$, $\Xi_H$, $F_L$ and $H_i$,
  respectively. Condition~(\ref{r:ikm_6}) is immediate from the axiom
  $\Gamma \proves L H$. Condition~(\ref{r:ikm_7}) follows from the
  axioms $\Gamma \proves L A_\tau$ for $\tau \in \B$.

  It is obvious that $\Gamma_0, u \notforces_{\M} t_0$, i.e., $\Gamma_0
  \notin \varsigma([t_0]_{\beta\eta})$, because $\Gamma_0
  \notproves_{\I} t_0$. Clearly, we also have $\Gamma_0, u
  \forces_{\M} t$ for all $t \in \Gamma_0$. This proves the theorem.
\end{proof}

\begin{remark}
  Note one subtlety here. The above theorem does not imply that $\I_0$
  or $\I_\omega$ is consistent. This is because we allow trivial
  Kripke models, i.e., ones such that $\varsigma(X) = \Sc$ for any $X
  \in \C$, and it is not obvious that nontrivial ones exist. Indeed,
  if we dropped the restriction $s \in \varsigma(L X)$ in
  condition~(\ref{r:ikm_1}) in Definition~\ref{r:def_ikm}, then all
  illative Kripke models would be trivial. To see this, let $X \in \C$
  and $s \in \Sc$ be arbitrary and consider the element $\Y \in \C$
  defined by the equation $\Y = \Y \supset X$. Note that dropping $s
  \in \varsigma(L X)$ in condition~(\ref{r:ikm_1}) in
  Definition~\ref{r:def_ikm} means dropping $s \in \varsigma(H X)$ in
  condition~(\ref{r:cond_supset_illat_01}) in
  Fact~\ref{r:fact_supset_illat}. For any $s' \ge s$ we obviously have
  $s' \in \varsigma(\Y \supset X)$ whenever $s' \in \varsigma(\Y)$. By
  condition~(\ref{r:cond_supset_illat_02}) in
  Fact~\ref{r:fact_supset_illat} we conclude that $s' \in
  \varsigma(X)$ whenever $s' \in \varsigma(\Y)$. Therefore, by
  condition~(\ref{r:cond_supset_illat_01}) in
  Fact~\ref{r:fact_supset_illat}, we have $s \in
  \varsigma(\Y)$. Hence, $s \in \varsigma(\Y \supset X)$ as well, so
  again $s \in \varsigma(X)$. Thus $\varsigma(X) = \Sc$. This argument
  is essentially Curry's paradox.
\end{remark}

%% We also conjecture that $\I_\omega^c$ is complete w.r.t one-state
%% classical illative models, but the proof would be more difficult than
%% the completeness proof given above. (This is OK, we don't have
%% boolean extensionality here).

For convenience of reference we state the following simple fact about
one-state classical illative models for $\I_\omega^c$, as we will be
constructing such a model in the next section. Recall that for a
classical illative model with a single state~$s$, the set~$\Ts$ of
true elements is defined by $\Ts = \{ X \in \C \;|\; s \in
\varsigma(X) \}$.

\begin{fact}\label{r:fact_classical_illat_model}
  For a one-state classical illative model for $\I_\omega^c$ the
  conditions on $\varsigma$ may be reformulated as follows:
  \begin{enumerate}
  \item if $L X \in \Ts$ and for all $Z \in \C$ such that $X Z \in \Ts$
    we have $Y Z \in \Ts$, then $\Xi X Y \in \Ts$, \label{r:cikm_1}
  \item if $\Xi X Y \in \Ts$ then for all $Z \in \C$ such that $X Z \in
    \Ts$ we have $Y Z \in \Ts$, \label{r:cikm_2}
  \item if $L X \in \Ts$ and for all $Z \in \C$ such that $X Z \in \Ts$
    we have $H(Y Z) \in \Ts$, then $H(\Xi X Y) \in
    \Ts$, \label{r:cikm_3}
  \item if $L X \in \Ts$, and either $L Y \in \Ts$ or there is no $Z
    \in \C$ such that $X Z \in \Ts$, then $L(F X Y) \in
    \Ts$, \label{r:cikm_4}
  \item if $X \in \Ts$ then $H X \in \Ts$, \label{r:cikm_5}
  \item $L H \in \Ts$, \label{r:cikm_6}
  \item $L A_\tau \in \Ts$ for $\tau \in \B$. \label{r:cikm_7}
  \end{enumerate}
\end{fact}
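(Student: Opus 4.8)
The plan is to exploit the fact that, with a single state $s$, the whole of $\varsigma$ is encoded by the set $\Ts$: by definition $s \in \varsigma(X)$ holds precisely when $X \in \Ts$. Moreover, since $\le$ is a partial order and $\Sc = \{s\}$, the only state $s'$ with $s' \ge s$ is $s$ itself. Hence every quantifier ``for all $s' \ge s$'' occurring in Definition~\ref{r:def_ikm} degenerates into a statement about $s$ alone, i.e. into a statement about membership in $\Ts$. The proof then consists in performing this translation on each of the seven conditions and checking, condition by condition, that the result is an equivalence.

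First I would dispatch the conditions that contain no quantification over later states, namely~(\ref{r:ikm_2}), (\ref{r:ikm_5}), (\ref{r:ikm_6}) and~(\ref{r:ikm_7}). For these one simply rewrites each $s \in \varsigma(W)$ as $W \in \Ts$, obtaining~(\ref{r:cikm_2}), (\ref{r:cikm_5}), (\ref{r:cikm_6}) and~(\ref{r:cikm_7}) with no further work. I would then turn to~(\ref{r:ikm_1}) and~(\ref{r:ikm_3}). Using that $s' \ge s$ forces $s' = s$, the premise ``for all $s' \ge s$ and all $Z$ such that $s' \in \varsigma(X Z)$ we have $s' \in \varsigma(Y Z)$'' collapses to ``for all $Z$ such that $X Z \in \Ts$ we have $Y Z \in \Ts$'', and the $H$-variant collapses analogously; together with the rewriting of $s \in \varsigma(L X)$ and of the conclusions, this yields~(\ref{r:cikm_1}) and~(\ref{r:cikm_3}).

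The only clause that is not pure rewriting is~(\ref{r:ikm_4}). After collapsing $s'$ to $s$, its hypothesis becomes ``if there exists $Z \in \C$ with $X Z \in \Ts$, then $L Y \in \Ts$''. I would observe that this implication is equivalent to the disjunction ``$L Y \in \Ts$, or there is no $Z \in \C$ with $X Z \in \Ts$'', which is exactly the hypothesis of~(\ref{r:cikm_4}); combined with $L X \in \Ts$ and the conclusion $L(F X Y) \in \Ts$ this gives~(\ref{r:cikm_4}).

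I expect no genuine obstacle here: the statement is a routine unwinding of the one-state special case of Definition~\ref{r:def_ikm}, the only mildly non-trivial point being the propositional reformulation of the premise of~(\ref{r:ikm_4}) as a disjunction. For completeness I would also remark that the double-negation clause defining a \emph{classical} illative model already speaks only of the state $s$, so it requires no reformulation and is simply omitted from the list.
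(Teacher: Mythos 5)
Your proposal is correct and is exactly the intended justification: the paper states this as a Fact with no written proof, treating it as an immediate unwinding of Definition~\ref{r:def_ikm} in the one-state case, which is precisely what you carry out (including the only slightly non-routine step, rewriting the premise of condition~(\ref{r:ikm_4}) as the disjunction in~(\ref{r:cikm_4})).
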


\section{The model construction}\label{r:sec_construction}

In this section we construct a model for~$\I_\omega^c$. The
construction is parametrized by a full model for classical
higher-order logic.

\subsection{Definitions}

In this subsection we give definitions necessary for the construction
and fix some notational conventions.

\begin{ssdefinition}
  We define the set of \emph{types} $\Tc^+$ by the following grammar:
  \begin{eqnarray*}
  \Tc^+ &::=& \Tc_1 \;|\; \omega \;|\; \varepsilon \\
  \Tc_1 &::=& \Tc \;|\; \Tc_1 \rightarrow \Tc_1 \;|\;
  \omega \rightarrow \Tc_1 \\
  \Tc &::=& o \;|\; \B \;|\; \Tc \rightarrow \Tc
  \end{eqnarray*}
  where $\B$ is a specific finite set of base types. Intuitively, the
  type $o$ is the type of propositions, $\omega$ is the type of
  arbitrary objects, $\varepsilon$ is the empty type.

  For the sake of simplicity we use the following notational
  convention: we sometimes write $\tau\rightarrow\varepsilon$ for
  $\varepsilon$ when $\tau \ne \varepsilon$,
  $\varepsilon\rightarrow\tau$ for $\omega$, and
  $\tau\rightarrow\omega$ for $\omega$. There is never any ambiguity
  because $\tau\rightarrow\varepsilon$ etc. are not valid types
  according to the grammar for~$\Tc^+$. This convention is only to
  shorten some statements later on. We also use the abbreviation
  $\tau_1^n\to\tau_2$ for $\tau_1\to\ldots\to\tau_1\to\tau_2$ where
  $\tau_1$ occurs $n$ times (possibly $n = 0$).\hfill $\Box$
\end{ssdefinition}

% Note that this definition of types is just a definition of some
% syntactic objects. We do not assume \emph{a priori} anything about
% their meaning or their connection to the informal notion of types used
% in the previous section.

From now on we fix a full model $\N = \langle \{\D_\tau \;|\; \tau \in
\Tc \}, I \rangle$ of classical higher-order logic and construct a
one-state classical illative model~$\M$ for~$\I_\omega^c$. We assume
that $\Tc \subset \Tc^+$ defined above corresponds exactly to the
types of~$\N$, and that the base types~$\B$ correspond exactly to the
base types used in the definition of the syntax of~$\I_\omega^c$.

We will define the universe of the model as the set of equivalence
classes of a certain relation on the set of type-free lambda-terms
over a set~$\Sigma^+$ of primitive constants, to be defined below. We
assume these terms to be different objects than the terms of the
syntax of~$\I_\omega^c$. We also treat lambda-terms up to
$\alpha$-equivalence, i.e., terms differing only in the names of bound
variables are considered identical.

\begin{ssdefinition}\label{r:def_canonical}
  We define a set of primitive constants $\Sigma^+$, and a set of
  \emph{canonical terms} as follows. First, for every type~$\tau \ne
  \omega$ we define by induction on the size of~$\tau$ a set of
  \emph{canonical terms of type~$\tau$}, denoted by~$\T_\tau$. We also
  define a set of constants~$\Sigma_\tau$ for every type~$\tau \notin
  \{\omega, \varepsilon\} \cup \{\omega\rightarrow\tau' \;|\; \tau'
  \in \Tc^+ \}$, i.e., we leave $\Sigma_\tau$ undefined if $\tau$ is
  not of the form required. First, we set $\T_\varepsilon =
  \emptyset$. In the inductive step we consider possible forms
  of~$\tau$. If~$\tau \in \Tc$ (i.e.~it does not contain~$\omega$
  or~$\varepsilon$) then we define~$\Sigma_\tau$ to contain a unique
  constant for every element~$d \in \D_\tau$. We set $\T_\tau =
  \Sigma_\tau$. If $\tau \notin \Tc$, $\tau = \tau_1\rightarrow\tau_2$
  and $\tau_1\ne\omega$, then denote by $\Sigma_\tau$ a set of new
  constants for every (set-theoretical) function from $\T_{\tau_1}$ to
  $\T_{\tau_2}$. Again we set $\T_{\tau} = \Sigma_\tau$. If $\tau =
  \omega\rightarrow\tau_2$ then $\T_\tau$ consists of all terms of the
  form $\lambda x . \rho$ where $\rho \in
  \T_{\tau_2}$.\footnote{Formally, terms are $\alpha$-equivalence
    classes of certain strings, i.e., by $\lambda x . \rho$ we mean the
    $\alpha$-equivalence class of the string $"\lambda x . \rho"$, so
    e.g. $\lambda x . \rho$ and $\lambda y . \rho$ are the same, which
    we denote by $\lambda x . \rho \equiv \lambda y . \rho$.}

  The symbol $\Sigma^A$ stands for a set consisting of distinct new
  constants $A_\tau$ for each base type $\tau \in \B$. Finally, we set
  $\Sigma^+ = \{\Xi, L\} \cup \Sigma^A \cup \bigcup_\tau \Sigma_\tau$
  where the index in the sum ranges over $\tau \notin \{\omega,
  \varepsilon\} \cup \{ \omega \to \tau' \;|\; \tau' \in \Tc^+
  \}$. For the sake of uniformity, we use the notation $\T_\omega$ for
  the set of all type-free lambda terms over~$\Sigma^+$. Note that
  terms in $\T_\omega$ are not necessarily canonical and all canonical
  terms are closed.

  Note that for $\tau \in \Tc$ the set~$\Sigma_\tau$ contains a unique
  constant for every element of~$\D_\tau$. Hence for each $\tau \in
  \Tc$ there is a natural bijection from~$\Sigma_\tau$
  onto~$\D_\tau$. We denote this bijection by~$\pi_\tau$.

  We now define a mapping~$\F$ such that for $\rho \in
  \T_{\tau_1\to\tau_2}$ we have $\F(\rho) :
  \T_{\tau_1}\to\T_{\tau_2}$, where $\tau_1\to\tau_2 \in \Tc_1$. If
  $\tau_1\to\tau_2 \in \Tc$ then $\tau_1, \tau_2 \in \Tc$,
  $\T_{\tau_1} = \Sigma_{\tau_1}$, and both~$\pi_{\tau_1}$
  and~$\pi_{\tau_2}$ are defined. In this case we set $\F(c)(c_1) =
  \pi_{\tau_2}^{-1}(\pi_{\tau_1\to\tau_2}(c)(\pi_{\tau_1}(c_1)))$ for
  $c \in \Sigma_{\tau_1\to\tau_2}$, $c_1 \in \Sigma_{\tau_1}$. If
  $\tau_1\to\tau_2 \notin \Tc$ and $\tau_1 \ne \omega$ then also
  $\T_{\tau_1\to\tau_2} = \Sigma_{\tau_1\to\tau_2}$ and by our
  construction to each $c \in \Sigma_{\tau_1\to\tau_2}$ corresponds a
  set-theoretical function~$f_c$ from~$\T_{\tau_1}$
  to~$\T_{\tau_2}$. In this case we set $\F(c) = f_c$. Finally, if
  $\rho \in \T_{\omega\rightarrow\tau}$ then $\rho = \lambda x
  . \rho'$ and by~$\F(\rho)$ we denote the constant function
  from~$\T_\omega$ to~$\T_\tau$ whose value is always $\rho'$. Note
  that because~$\N$ is assumed to be a full model, so by our
  construction if~$\tau_1\to\tau_2 \in \Tc_1$ and $\tau_1 \ne \omega$
  then for every set-theoretical function~$f$ from~$\T_{\tau_1}$
  to~$\T_{\tau_2}$ there exists a constant~$\rho_f \in
  \Sigma_{\tau_1\to\tau_2}$ such that $\F(\rho_f) = f$.

  By $\top \in \Sigma_o$ we denote the constant corresponding to the
  element $\top \in \D_o$, by $\bot \in \Sigma_o$ the one
  corresponding to $\bot \in \D_o$. Note that $\Sigma_o = \{\top,
  \bot\}$, because $\D_o = \{\top,\bot\}$.

  Note that if $\tau_1, \tau_2 \ne \omega$ and $\tau_1 \ne \tau_2$
  then $\T_{\tau_1} \cap \T_{\tau_2} = \emptyset$. Hence every
  canonical term $\rho$ may be assigned a unique type $\tau \ne
  \omega$ such that $\rho \in \T_\tau$. When talking about the
  \emph{canonical type}, or simply \emph{the type}, of a canonical
  term we mean the type thus defined.\hfill $\Box$
\end{ssdefinition}

An \emph{$n$-ary context} $C$ is a lambda-term over the set of
constants $\Sigma^+ \cup \{\Box_1,\ldots,\Box_n\}$, where
$\Box_1,\ldots,\Box_n \notin \Sigma^+$. The constants
$\Box_1,\ldots,\Box_n$ are the \emph{boxes} of $C$. If~$C$ is an
$n$-ary context then by $C[t_1,\ldots,t_n]$ we denote the term $C$
with all occurences of~$\Box_i$ replaced with~$t_i$ for
$i=1,\ldots,n$. Unless otherwise stated, we assume that the free
variables of $t_1,\ldots,t_n$ \emph{do not become bound} in
$C[t_1,\ldots,t_n]$. By a \emph{context} we usually mean a unary
context, unless otherwise qualified. In this case we write~$\Box$
instead of~$\Box_1$.

In what follows $\alpha$, $\beta$, etc. stand for ordinals; $t$,
$t_1$, $t_2$, $r$, $r_1$, $r_2$, $q$, $q_1$, $q_2$ etc. stand for
type-free lambda-terms over~$\Sigma^+$ from which we build the model;
$c$, $c_1$, $c_2$, etc. stand for constants from~$\Sigma^+$; $\tau$,
$\tau_1$, $\tau_2$, etc. stand for types; $\rho$, $\rho_1$, $\rho_2$
stand for canonical terms (i.e.~terms $\rho \in \T_\tau$ for
$\tau\ne\omega$); and $C$, $C'$, $C_1$, $C_2$, etc. denote contexts;
unless otherwise qualified.

The following simple fact states some easy properties of canonical
terms. It will sometimes be used implicitly in what follows.

\begin{ssfact}\label{r:fact_canonical}
  If $\rho$ is a canonical term then:
  \begin{enumerate}
  \item $\rho \equiv \lambda x_1 \ldots x_n . c$ where $n \ge 0$, $c
    \in \Sigma_\tau$ for some $\tau$ (so $\tau \ne \omega\to\tau_1$),
    and $\rho \in \T_{\omega^n\to\tau}$, \label{r:canon_1}
  \item if $\rho \equiv C[t]$ then either $C \equiv \lambda x_1 \ldots
    x_k . \Box$ and $t$ is a canonical term, or $C \equiv
    \rho$. \label{r:canon_2}
  \end{enumerate}
\end{ssfact}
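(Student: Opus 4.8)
The plan is to prove both statements from the inductive definition of the sets $\T_\tau$: the first by induction on the size of the canonical type of $\rho$, and the second as an easy consequence of the explicit shape that the first produces.

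For the first statement I would induct on the size of the unique type $\tau \ne \omega$ with $\rho \in \T_\tau$. The case $\tau = \varepsilon$ is vacuous since $\T_\varepsilon = \emptyset$. If $\tau \in \Tc$, or if $\tau = \tau_1 \to \tau_2$ with $\tau_1 \ne \omega$ and $\tau \notin \Tc$, then by definition $\T_\tau = \Sigma_\tau$, so $\rho \equiv c$ for a constant $c \in \Sigma_\tau$; here $n = 0$ and $\tau$ is not of the form $\omega \to \tau'$, exactly as required. The only remaining case is $\tau = \omega \to \tau_2$, where $\rho \equiv \lambda x . \rho'$ with $\rho' \in \T_{\tau_2}$ canonical and $\tau_2$ of strictly smaller size. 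The inductive hypothesis gives $\rho' \equiv \lambda x_1 \ldots x_m . c$ with $c \in \Sigma_{\tau'}$, $\tau' \ne \omega \to \tau''$, and $\rho' \in \T_{\omega^m \to \tau'}$; uniqueness of canonical types forces $\tau_2 = \omega^m \to \tau'$, whence $\tau = \omega^{m+1} \to \tau'$ and $\rho \equiv \lambda x x_1 \ldots x_m . c \in \T_{\omega^{m+1}\to\tau'}$, which is the claim with $n = m+1$.

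For the second statement I would exploit the shape just obtained: $\rho \equiv \lambda x_1 \ldots x_n . c$ is built solely from $n$ abstractions followed by a single primitive constant $c$, and in particular contains no applications. Since $C[t] \equiv \rho$ and substituting $t$ for $\Box$ performs no reduction (canonical terms being compared up to syntactic identity), the box can occur only along the spine of abstractions. I would make this precise by a short secondary induction on $n$: either $\Box$ does not occur in $C$, in which case $C \equiv C[t] \equiv \rho$; or $C \equiv \Box$, which is the case $k = 0$ with $t \equiv \rho$ canonical; or $C \equiv \lambda x_1 . C_1$ with $C_1[t] \equiv \lambda x_2 \ldots x_n . c$, and the inductive hypothesis applied to this canonical tail yields the conclusion after prepending $\lambda x_1$. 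In every case the subterm occupying $\Box$ is one of the tails $\lambda x_{k+1} \ldots x_n . c \in \T_{\omega^{n-k}\to\tau}$, which is canonical.

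The argument is essentially bookkeeping once the normal form of the first statement is available; the only point requiring care is confirming that $C$ cannot place $\Box$ off the spine, for instance as an argument of an application or inside the atomic constant $c$, and this is precisely what the application-free, constant-headed shape of $\rho$ rules out. I therefore expect no real obstacle, but I would treat abstraction under $\alpha$-equivalence so that the bound names $x_1, \ldots, x_n$ in $C$ and in $\rho$ may be taken to coincide.
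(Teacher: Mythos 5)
Your proof is correct, and it is the intended routine verification: the paper states this fact without proof (it is used implicitly later), and your argument --- induction on the canonical type using the clauses of Definition~\ref{r:def_canonical} for part~(\ref{r:canon_1}), then reading part~(\ref{r:canon_2}) off the abstraction-spine shape, with the observation that the chain structure of $\lambda x_1 \ldots x_n . c$ forces the box onto the spine and makes every such subterm a canonical tail in $\T_{\omega^{n-k}\to\tau}$ --- is exactly what is needed.
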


For each ordinal~$\alpha$ we inductively define reduction
systems~$R_\alpha$ and~$\widehat{R_\alpha}$, a relation~$\sim_\alpha$
between terms and types in~$\Tc^+$, and a
relation~$\succ_\alpha$ between terms and canonical
terms. Formally, all these notions are defined by one induction in a
mutually recursive way, but we split up the definitions for the sake
of readability. These definitions are monotone with respect
to~$\alpha$, so the induction closes at some ordinal, i.e., the
relations do not get larger after this ordinal.

First, let us fix some notations. We write~$R_{<\alpha}$
for~$\bigcup_{\beta<\alpha}R_{\beta}$, $\succ_{<\alpha}$
for~$\bigcup_{\beta<\alpha}\succ_\beta$, $\sim_{<\alpha}$
for~$\bigcup_{\beta<\alpha}\sim_\beta$. We use the notation $\equiv$
for identity of terms up to
$\alpha$-equivalence. By~$\contr_{\le\alpha}$ we denote the reduction
relation of~$R_\alpha$, by~$\contr_{\le\alpha}^\equiv$ the reflexive
closure of~$\contr_{\le\alpha}$, by~$\reduces_{\le\alpha}$ the
transitive reflexive closure of~$\contr_{\le\alpha}$, and
by~$=_{\le\alpha}$ the transitive reflexive symmetric closure. We
write $[t]_\alpha$ for the equivalence class of a term $t$ w.r.t. the
relation~$=_{\le\alpha}$. Analogously, we use the
subscript~$_{<\alpha}$ for relations corresponding to $R_{<\alpha}$,
and~$_{=\alpha}$ for relations corresponding to
$\widehat{R_{\alpha}}$. We drop the subscripts when they are obvious
or irrelevant.

\begin{ssnotation}\label{r:convention_kt}
  In what follows a term of the form $K t$ should be read as $\lambda
  x . t$ where $x \notin FV(t)$, a term $H t$ as $L \lambda x . t$
  where $x \notin FV(t)$, and $F t_1 t_2$ as $\lambda f . \Xi t_1
  (\lambda x . t_2 (f x))$. We adopt this convention to shorten
  notations.\hfill $\Box$
\end{ssnotation}

Before embarking on the task of rigorously constructing the model we
explain the intuitive meaning of various notions formally introduced
later. This is necessarily informal and at points rather vague.

Informally speaking, we identify types with sets of terms. A base type
corresponds to the set of all constants of this type, the type~$o$ to
the set of all propositions, the type~$\omega$ to the set of all
terms, the type $\varepsilon$ to the empty set, and a function
type~$\tau_1\to\tau_2$ to the set of all terms~$t$ such that for all
terms~$t_1$ of type~$\tau_1$ the term $t t_1$ has type~$\tau_2$. It is
known at the beginning of the transfinite inductive definition exactly
which terms have base types, but not so for type~$o$ or function
types. During the course of the induction new terms may obtain
types. If~$r$ is a term, and~$\alpha$ an ordinal, then by $r
\leadsto_\alpha \top$ we mean that at stage~$\alpha$ in the induction,
$r$ has been shown to be ``true''. If $r \equiv F A_{\tau_1}
A_{\tau_2} t$, we interpret this as saying that, at stage~$\alpha$ in
the induction, the term~$t$ has been shown to have type
$\tau_1\to\tau_2$. It may be that for all $\beta < \alpha$ we may have
$F A_{\tau_1} A_{\tau_2} t \not\leadsto_\beta \top$, yet $F A_{\tau_1}
A_{\tau_2} t \leadsto_\alpha \top$. So the fact that~$t$ has
type~$\tau_1\to\tau_2$ becomes known only at stage~$\alpha$ of the
induction. Our induction stops when no new typings may be obtained and
no new terms may become true or false, i.e., when we have all
information we need to construct the model.

Note that canonical terms may obtain types different from their
canonical types. For instance, a term of the form $\lambda x . c$
where $c \in \Sigma_\tau$ will ultimately obtain the type~$\omega$ and
all of the types~$\tau'\to\tau$ for any type~$\tau'$. As far as
canonical terms are concerned, we mostly care about their canonical
types, and it is known beforehand what types these are.

In $R_\alpha$ we will have reduction rules of $\beta$- and
$\eta$-reduction, and rules of the form $c \rho \to \F(c)(\rho)$,
where $c \in \Sigma_{\tau_1\to\tau_2}$ and $\rho \in \T_{\tau_1}$. We
will also add some other rules to make certain terms
``indistinguishable'', as explained in the paragraph below.

Intuitively, $t \succ_\alpha \rho$ is intended to hold if~$\rho \in
\T_\tau$ is a ``canonical'' term which is ``equivalent'' to~$t$ in
type~$\tau$, basing on the information we have at stage $\alpha$. Let
us give some examples to elucidate what we mean by this. For instance,
suppose we have two distinct (hence disjoint) base types~$\tau_1$
and~$\tau_2$, and two functions $\mathrm{Id}_{\tau_1\to\tau_1} \in
\D_{\tau_1\to\tau_1}$ and $\mathrm{Id}_{\tau_2\to\tau_2} \in
\D_{\tau_2\to\tau_2}$ which are identities on~$\D_{\tau_1}$
and~$\D_{\tau_2}$ respectively. In~$\Sigma^+$ we will have two
canonical constants $\mathrm{id}_{\tau_1\to\tau_1}$ and
$\mathrm{id}_{\tau_2\to\tau_2}$ of type~$\tau_1\to\tau_1$ and
$\tau_2\to\tau_2$ respectively, associated with the functions
$\mathrm{Id}_{\tau_1\to\tau_1}$ and $\mathrm{Id}_{\tau_2\to\tau_2}$,
i.e., such that $\F(\mathrm{id}_{\tau_1\to\tau_1}) = \pi_{\tau_1}^{-1}
\circ \mathrm{Id}_{\tau_1\to\tau_1} \circ \pi_{\tau_1}$ and
$\F(\mathrm{id}_{\tau_2\to\tau_2}) = \pi_{\tau_2}^{-1} \circ
\mathrm{Id}_{\tau_2\to\tau_2} \circ \pi_{\tau_2}$. The reduction rules
associated with $\mathrm{id}_{\tau_1\to\tau_1}$ will be
$\mathrm{id}_{\tau_1\to\tau_1} c \to c$ for every canonical
constant~$c$ \emph{of~type~$\tau_1$}, and analogously for
$\mathrm{id}_{\tau_2\to\tau_2}$. Note that
$\mathrm{id}_{\tau_1\to\tau_1} c$ will not form a redex if~$c$ is a
canonical constant of type different from~$\tau_1$. Now we have both
$\lambda x . x \succ_1 \mathrm{id}_{\tau_1\to\tau_1}$ and $\lambda x
. x \succ_1 \mathrm{id}_{\tau_2\to\tau_2}$, because $\lambda x . x$
behaves exactly like $\mathrm{id}_{\tau_1\to\tau_1}$ when given
arguments of type~$\tau_1$, and exactly like
$\mathrm{id}_{\tau_2\to\tau_2}$ when given arguments of
type~$\tau_2$. In fact, we will define the reduction systems
$R_\alpha$ so as to make $\lambda x . x$ and
$\mathrm{id}_{\tau_1\to\tau_1}$ indistinguishable, for sufficiently
large~$\alpha$, wherever a term of type~$\tau_1\to\tau_1$ is
``expected''. For instance, for any reduction rule in $R_\alpha$ of
the form $\rho\, \mathrm{id}_{\tau_1\to\tau_1} \to c$, where~$\rho$ is
a canonical term of type~$(\tau_1\to\tau_1)\to\tau$ for some~$\tau$,
we will add to $R_{\alpha+1}$ a reduction rule $\rho\, (\lambda x . x)
\to c$.

In the case $\rho \in \{\top, \bot\}$, the relation $t \succ_\alpha
\rho$ encompasses a definition of truth. The condition $t \succ_\alpha
\top$ means that~$t$ is certainly true, basing on the information from
the earlier stages $\beta < \alpha$ of the inductive definition. So if
$t \succ_\alpha \top$ then $t$ should behave like $\top$ wherever a
truth-value is expected. If $t \succ_\alpha \bot$, then $t$ is
certainly not true.

If $t \ne \rho$ then we never have $t \succ_\alpha \rho$ for a
canonical term~$\rho$ of some base type~$\tau \in \B$, because no term
different from~$\rho$ behaves like $\rho$ if the type of~$\rho$ is an
atomic type different from~$o$.

\begin{ssnotation}
  We use the notation $t \leadsto_{\alpha} \rho$ when $t
  \reduces_{\le\alpha} t' \succ_\alpha \rho$. We
  write~$\leadsto_{<\alpha}$
  for~$\bigcup_{\beta<\alpha}\leadsto_\beta$.
\end{ssnotation}

Informally, $t \leadsto_{\alpha} \rho$ holds if we can reduce $t$,
using the rules of $R_\alpha$, to a term equivalent to a canonical
term~$\rho$ in the type of~$\rho$ basing on what we know at stage
$\alpha$ of the inductive definition. A careful reader will notice
that what we ultimately really care about is the relation
$\leadsto_{\alpha}$, not $\succ_\alpha$, because we want to identify
$R_\alpha$-equivalent terms. The relation $\succ_\alpha$ is needed
chiefly to facilitate the proofs.

The condition $t \sim_\alpha \tau$ is intended to hold if $t$
``represents'' the type~$\tau$ basing on what we know at
stage~$\alpha$, i.e., it is a ``predicate'' which is true when applied
to terms of type~$\tau$, and is never true when applied to terms which
are not of type~$\tau$. In other words, $L t \leadsto_\alpha \top$ and
for all terms~$r$ known to be of type~$\tau$ we have $t r
\leadsto_\alpha \top$, but we should not have $t r \leadsto_\alpha
\top$ for any~$r$ which is not of type~$\tau$. So for instance for
each type~$\tau \in \B$ we should have $A_\tau \sim_\alpha \tau$ for
sufficently large~$\alpha$. Because~$\varepsilon$ is the empty type,
if $t \sim_\alpha \varepsilon$ then we should never have $t r
\leadsto_{<\alpha} \top$ for any term~$r$. Since~$\omega$ is the type
of arbitrary objects we should have $t \sim_\alpha \omega$ if for all
terms~$r$ we have $t r \leadsto_{<\alpha} \top$.

Having explained the intuitive meaning of the relations, we may
proceed to formal definitions. The definition below depends on the
definition of~$\succ_{<\alpha}$, and thus on~$\succ_\beta$ for~$\beta
< \alpha$.

\begin{ssdefinition}\label{r:def_reductions}
  A \emph{reduction system} is a set of reduction rules over a
  specified set of terms, i.e., a set of pairs of terms.  In all
  reduction systems we consider we assume the set of terms to be the
  type-free lambda-terms over $\Sigma^+$. Instead of writing $\langle
  t_1,t_2\rangle \in R$ we usually say that $t_1 \to t_2$ is a
  reduction rule of~$R$. Given a reduction system~$R$ we define its
  associated \emph{reduction relation} $\contr_R$ by: $t_1 \contr_R
  t_2$ iff there exists a context~$C$ with exactly one box and
  terms~$r_1, r_2$ such that $t_1 \equiv C[r_1]$, $t_2 \equiv C[r_2]$
  and $r_1 \to r_2$ is a rule of~$R$. In contrast to all subsequent
  uses of contexts, here we allow the free variables of $r_1$ and
  $r_2$ to become bound in $C[r_1]$ or $C[r_2]$.

  We define $\widehat{R_\alpha}$ to contain the following reduction
  rules:
  \begin{itemize}
  \item for $\alpha = 0$: rules of $\beta$- and $\eta$-reduction,
  \item for $\alpha > 0$: rules $c t \rightarrow \rho_2$ for
    every $c \in \Sigma_{\tau_1\rightarrow\tau_2}$ (so $\tau_1 \ne
    \omega$), every $\rho_2 \in \T_{\tau_2}$ and every term $t$ such
    that $t \succ_{<\alpha} \rho_1$ and $\F(c)(\rho_1) \equiv \rho_2$.
  \end{itemize}
  We set $R_\alpha = R_{<\alpha} \cup \widehat{R_\alpha}$.\hfill
  $\Box$
\end{ssdefinition}

\begin{ssdefinition}\label{r:def_sim}
  The relation $\sim_\alpha$ is defined by the following rules. Recall
  that $\tau_1\rightarrow\varepsilon = \varepsilon$ for $\tau_1 \ne
  \varepsilon$, $\varepsilon\rightarrow\tau_2 = \omega$, and
  $\tau_1\rightarrow\omega = \omega$.

  \begin{longtable}{cc}
    \( {(\mathrm{A}):}\; \inferrule{\tau \in \B}{A_\tau \sim_\alpha
      \tau} \) & \( {(\mathrm{H}):}\; \inferrule{ }{H \sim_\alpha o}
    \)
    \\
    & \\
    \( {(\mathrm{K\omega}):}\; \inferrule{t \leadsto_{<\alpha} \top}{K
      t \sim_\alpha \omega} \) & \( {(\mathrm{K\varepsilon}):}\;
    \inferrule{t \leadsto_{<\alpha} \bot}{K t \sim_\alpha \varepsilon}
    \)
    \\
    & \\
    \multicolumn{2}{c}{
      \(
      {(\mathrm{F}):}\; \inferrule{t_1 \sim_{<\alpha} \tau_1 \\
        t_2 \sim_{<\alpha} \tau_2}
      {F t_1 t_2 \sim_\alpha \tau_1\rightarrow\tau_2} \)
    }
    \\
    & \\
    \multicolumn{2}{c}{
      \(
      {(\mathrm{F'}):}\; \inferrule{t_1 \sim_{<\alpha} \tau_1 \\
        \lambda z . t_2 \sim_{<\alpha} \tau_2 \\
        f,x \notin FV(t_1,t_2)}
      {\lambda f . \Xi t_1 (\lambda x . t_2[z/f x]) \sim_\alpha
        \tau_1\rightarrow\tau_2} \)
    }
    \\
    & \\
    \multicolumn{2}{c}{
      \(
      {(\mathrm{F''}):}\; \inferrule{t_1 \sim_{<\alpha} \tau_1 \\
        t_2 \sim_{<\alpha} \tau_2 \in \{ \omega, \varepsilon \} \\ f \notin FV(t_1,t_2)}
      {\lambda f . \Xi t_1 t_2 \sim_\alpha \tau_1\rightarrow\tau_2} \)
    }
    \\
    & \\
    \(
    {(\mathrm{F\omega}):}\; \inferrule{t_1 \sim_{<\alpha}
      \varepsilon}
    {\lambda f . \Xi t_1 t_2 \sim_\alpha \omega} \) &
    \(
    {(\mathrm{F\omega'}):}\; \inferrule{t_1 \sim_{<\alpha}
      \varepsilon}
    {\Xi t_1 \sim_\alpha \omega} \)
  \end{longtable}
\end{ssdefinition}

The above definition depends on the definitions of~$R_\beta$,
$\sim_\beta$ and~$\succ_\beta$ for~$\beta < \alpha$. The next
definition of~$\succ_\alpha$ depends on the definitions of~$R_\beta$
and~$\sim_\beta$ for~$\beta \le \alpha$, and on~$\succ_{<\alpha}$.

\begin{ssdefinition}\label{r:def_succ}
  We define the relation $t \succ_\alpha \rho$ for canonical terms
  $\rho$ by the following conditions:
  \begin{itemize}
  \item $\rho \succ_\alpha \rho$ if the canonical type of~$\rho$
    is~$o$ or a base type,
  \item $t \succ_\alpha \rho$ if the canonical type of $\rho$ is
    $\tau_1\to\tau_2$ and $t$ is a term such that for any $t_1 \in
    \T_{\tau_1}$ we have $t t_1 \leadsto_{<\alpha}
    \F(\rho)(t_1)$. Note that we allow $\tau_1 = \omega$ but not
    $\tau_1 = \varepsilon$.
  \end{itemize}

  In particular, $\top \succ_\alpha \top$ and $\bot \succ_\alpha \bot$
  by the above definition. For $\rho \in \{\top,\bot\}$ we give
  additional postulates. For $\alpha \ge 0$ we postulate $t
  \succ_\alpha \top$ for all terms $t$ such that at least one of the
  following holds:
  \begin{enumerate}
  \item[$(A_\tau^\top)$] $t \equiv A_{\tau} c$ where $\tau \in \B$ and
    $c \in \Sigma_\tau$,
  \item[$(\Xi^\top)$] $t \equiv \Xi t_1 t_2$ where $t_1$, $t_2$ are
    terms such that there exists $\tau$ s.t. $t_1 \sim_\alpha \tau$
    and for all $t_3 \in \T_\tau$ we have $t_2 t_3 \leadsto_{<\alpha}
    \top$,
  \item[$(L^\top)$] $t \equiv L t_1$ and $t_1 \sim_{\alpha} \tau$ for
    some type~$\tau$.
  \end{enumerate}
  Finally, when $\alpha \ge 0$ we postulate $t \succ_\alpha \bot$ for
  all terms $t$ such that:
  \begin{enumerate}
  \item[$(\Xi^\bot)$] $t \equiv \Xi t_1 t_2$ and there exists a type
    $\tau$ such that:
    \begin{itemize}
    \item $t_1 \sim_\alpha \tau$, and
    \item for every term $t_3 \in \T_\tau$ we have $t_2 t_3
      \leadsto_{<\alpha} \top$ or $t_2 t_3 \leadsto_{<\alpha} \bot$,
    \item there exists a term $t_3 \in \T_\tau$ with $t_2 t_3
      \leadsto_{<\alpha} \bot$.
    \end{itemize}
  \end{enumerate}
\end{ssdefinition}

The intuitive interpretation of $\Xi t_1 t_2$ is restricted
quantification $\forall x . t_1 x \supset t_2 x$, but $t_1$ is
required to represent a type, if $\Xi t_1 t_2$ is to have a logical
value. In illative combinatory logic the notions of being
(representing) a type and being eligible to stand as a quantifier
range are equivalent. It turns out that the types of $\I_\omega^c$ are
just the types defined by $\Tc^+$. This explains putting $t_1
\sim_\alpha \tau$ in some of the cases above.

During the course of the transfinite inductive definition some
previously untyped terms $t$ will obtain types, e.g. a statement of
the form $F A_{\tau_1} A_{\tau_2} t$ will become true at some stage
$\alpha$. At that point we need to decide which term among the
canonical terms of type $\tau_1\to\tau_2$ behaves exactly
like~$t$. The whole correctness proof rests on the fact that this
decision is always possible. That we may choose such a canonical term
implies that quantifying over only canonical terms of a certain
type~$\tau$ is equivalent to quantifying over all terms of
type~$\tau$. This justifies restricting quantification to canonical
terms in the above definition of $t \succ_\alpha \top$.

\medskip

Let us now give some examples illustrating the above definitions.

\begin{ssexample}
  Suppose we have a base type~$\tau$ and~$\widehat{\mathrm{Id}} \in
  \D_{\tau\to\tau}$ is the identity function on~$\D_\tau$. Let
  $\mathrm{Id} = \pi_{\tau}^{-1} \circ \widehat{\mathrm{Id}} \circ
  \pi_{\tau}$, i.e., $\mathrm{Id}(c) = c$ for any $c \in
  \Sigma_\tau$. There is a constant $\mathrm{id} \in
  \Sigma_{\tau\to\tau}$ such that $\F(\mathrm{id}) = \mathrm{Id}$. We
  show $\lambda x . x \succ_1 \mathrm{id}$. Let $c \in \Sigma_\tau =
  \T_\tau$. We have $(\lambda x . x) c \contr_{<1} c$, because $R_{<1}
  = \bigcup_{n<1}R_n = R_0$ and~$R_0$ contains the rules of
  $\beta$-reduction. We also have $c \succ_{<1} c \equiv
  \F(\mathrm{id})(c)$ by the first part of
  Definition~\ref{r:def_succ}. Therefore $(\lambda x . x) c
  \leadsto_{<1} c$. Since $c \in \Sigma_\tau$ was arbitrary, we obtain
  $\lambda x . x \succ_{1} \mathrm{id}$ by the second part of
  Definition~\ref{r:def_succ}.

  Now we show that $\lambda y x . x \succ_2 \lambda y
  . \mathrm{id}$. We have $\lambda y . \mathrm{id} \in
  \T_{\omega\to\tau\to\tau}$. So let $t \in \T_\omega$. We have
  $(\lambda y x . x) t \contr_{<2} \lambda x . x$. We already proved
  that $\lambda x . x \succ_{1} \mathrm{id}$. Note that $\F(\lambda y
  . \mathrm{id})$ is the constant function from~$\T_\omega$
  to~$\T_{\tau\to\tau}$ whose value is always~$\mathrm{id}$. This
  implies that $(\lambda y x. x) t \leadsto_{<2} \mathrm{id} \equiv
  \F(\lambda y . \mathrm{id})(t)$ for any $t \in \T_\omega$. Hence
  $\lambda y x. x \succ_2 \lambda y . \mathrm{id}$.

  Let $\rho \in \Sigma_{((\omega\to\tau)\to\tau)\to\tau}$ be such that
  $\F(\rho)(f) \equiv \F(f)(\lambda y . \mathrm{id})$ for $f \in
  \Sigma_{(\omega\to\tau)\to\tau}$. As another example we will show
  that $\lambda z . z (\lambda y x . x) \succ_4 \rho$. So suppose $f
  \in \Sigma_{(\omega\to\tau)\to\tau} =
  \T_{(\omega\to\tau)\to\tau}$. We have $(\lambda z . z (\lambda y x
  . x)) f \contr_{<4} f (\lambda y x . x)$. We proved in the previous
  paragraph that $\lambda y x . x \succ_2 \lambda y . \mathrm{id}$. By
  the second part of Definition~\ref{r:def_reductions} we obtain $f
  (\lambda y x . x) \contr_{=3} \F(f)(\lambda y . \mathrm{id})$. Hence
  $(\lambda z . z (\lambda y x . x)) f \reduces_{<4} \F(f)(\lambda y
  . \mathrm{id})$ for any $f \in
  \Sigma_{(\omega\to\tau)\to\tau}$. Obviously we have $\F(f)(\lambda y
  . \mathrm{id}) \succ_0 \F(f)(\lambda y . \mathrm{id})$ by the first
  part of Definition~\ref{r:def_succ}, because the range of~$\F(f)$ is
  included in~$\Sigma_\tau$. Recalling that $\F(\rho)(f) \equiv
  \F(f)(\lambda y . \mathrm{id})$ for any $f \in
  \Sigma_{(\omega\to\tau)\to\tau}$ we obtain $\lambda z . z (\lambda y
  x . x) \succ_4 \rho$ by the second part of
  Definition~\ref{r:def_succ}.
\end{ssexample}

\begin{sslemma}\label{r:lem_monotonous}
  For $\alpha \le \beta$ we have the following inclusions: $R_{\alpha}
  \subseteq R_{\beta}$, $\sim_\alpha\; \subseteq \;\sim_\beta$,
  and~$\succ_\alpha\; \subseteq \;\succ_\beta$.
\end{sslemma}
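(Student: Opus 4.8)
The plan is to exploit the fact, emphasized in the text preceding Definition~\ref{r:def_reductions}, that all three families are built by a single transfinite recursion in which every premise of every defining clause refers to strictly earlier stages, through the unions $R_{<\alpha}$, $\sim_{<\alpha}$, $\succ_{<\alpha}$ and $\leadsto_{<\alpha}$ --- the lone exception being the occurrences of $\sim_\alpha$ in the clauses for $\succ_\alpha$. Since $\alpha\le\beta$ entails $\{\gamma\mid\gamma<\alpha\}\subseteq\{\gamma\mid\gamma<\beta\}$, each of these unions is automatically monotone in its bound, i.e. $R_{<\alpha}\subseteq R_{<\beta}$, $\sim_{<\alpha}\subseteq\sim_{<\beta}$, $\succ_{<\alpha}\subseteq\succ_{<\beta}$ and $\leadsto_{<\alpha}\subseteq\leadsto_{<\beta}$, with no recursion needed. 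I would therefore prove the three inclusions in the order $R$, then $\sim$, then $\succ$, mirroring the dependency layering stated between the definitions, so that when I treat $\succ$ the inclusion for $\sim$ is already available.

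For $R$, the inclusion is immediate from $R_\alpha = R_{<\alpha}\cup\widehat{R_\alpha}$: if $\alpha<\beta$ then $R_\alpha$ is one of the sets in the union defining $R_{<\beta}$, so $R_\alpha\subseteq R_{<\beta}\subseteq R_\beta$, and the case $\alpha=\beta$ is trivial; note that the behaviour of $\widehat{R_\alpha}$ never enters. Because the associated reduction relation is the closure of the rule set under arbitrary one-hole contexts, $R_\alpha\subseteq R_\beta$ gives $\contr_{\le\alpha}\subseteq\contr_{\le\beta}$, and hence $\reduces_{\le\alpha}\subseteq\reduces_{\le\beta}$ and $=_{\le\alpha}\subseteq=_{\le\beta}$ by monotonicity of the reflexive, transitive and symmetric closure.

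For $\sim$, I would inspect the six clauses of Definition~\ref{r:def_sim}. The axioms $(\mathrm{A})$ and $(\mathrm{H})$ carry no premise and so fire at every stage; each of $(\mathrm{K\omega})$, $(\mathrm{K\varepsilon})$, $(\mathrm{F})$ and $(\mathrm{F\omega})$ has premises only of the forms $t\leadsto_{<\alpha}\top$, $t\leadsto_{<\alpha}\bot$, or $t_i\sim_{<\alpha}\tau_i$, all of which are preserved when the bound is enlarged from $\alpha$ to $\beta$ by the union-monotonicity above. Hence any derivation of $t\sim_\alpha\tau$ is also a derivation of $t\sim_\beta\tau$, giving $\sim_\alpha\subseteq\sim_\beta$. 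The point to record here is that $\sim_\alpha$ refers neither to $\sim_\alpha$ nor to $\succ_\alpha$ at the current stage, so this step is genuinely non-circular and can be completed before touching $\succ$.

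For $\succ$, the first two clauses of Definition~\ref{r:def_succ} (reflexivity $\rho\succ_\alpha\rho$, and the function-type clause requiring $t t_1\leadsto_{<\alpha}\F(\rho)(t_1)$) together with the $\alpha=0$ postulates for $\top$ are either purely syntactic or use only $\leadsto_{<\alpha}$, so they transfer directly. The delicate clauses are $(\Xi_i^\top)$, $(\Xi_H^\top)$, $(F_L^\top)$, $(H_i^\top)$ and $(\Xi^\bot)$: besides premises of the stable forms $\cdot\leadsto_{<\alpha}\cdot$ and $H t\succ_{<\alpha}\top$, they each carry a same-stage hypothesis $t_1\sim_\alpha\tau$ (or $t_1\sim_\alpha\varepsilon$). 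This is exactly where I would invoke the already-proved inclusion $\sim_\alpha\subseteq\sim_\beta$ to upgrade $t_1\sim_\alpha\tau$ to $t_1\sim_\beta\tau$, while the remaining premises survive by union-monotonicity. Consequently every clause that fires at stage $\alpha$ also fires at stage $\beta$, yielding $\succ_\alpha\subseteq\succ_\beta$. The only real obstacle is precisely this same-stage coupling of $\succ_\alpha$ to $\sim_\alpha$; I expect the proof to hinge on verifying, from the dependency note preceding Definition~\ref{r:def_succ}, that $\succ_\alpha$ uses $\sim$ only up to and including stage $\alpha$ and uses $\succ$ only strictly below $\alpha$, so that treating $\sim$ before $\succ$ removes any circularity and no genuine transfinite induction on $\beta$ is actually required.
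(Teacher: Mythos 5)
Your argument is correct and is exactly the routine unfolding that the paper compresses into its one-line proof (``Follows easily from definitions''): every premise of every clause refers to a stage strictly below $\alpha$ via a union over ordinals, except for the same-stage occurrences of $\sim_\alpha$ in the clauses for $\succ_\alpha$, which you rightly handle by establishing the $\sim$ inclusion first. Nothing is missing.
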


\begin{proof}
  Follows easily from definitions.
\end{proof}

It follows from Lemma~\ref{r:lem_monotonous} by appealing to the
well-known Knaster-Tarski fixpoint theorem that there exists an
ordinal~$\zeta$ such that $\succ_\zeta\; = \;\succ_{<\zeta}$ and
$R_\zeta = R_{<\zeta}$. This simple fact may also be shown directly as
follows. Suppose~$\zeta$ is an ordinal with cardinality greater than
$(\T_\omega \cup \{\Box\})^4$ and there is no $\alpha < \zeta$ such
that $R_\alpha = R_{<\alpha}$ and $\succ_\alpha\; =
\;\succ_{<\alpha}$. Then for each $\alpha < \zeta$ either $R_{\alpha}
\setminus R_{<\alpha}$ or $\succ_\alpha \setminus \succ_{<\alpha}$ is
non-empty. Because $R_\alpha \subseteq \T_\omega \times \T_\omega$ and
$\succ_\alpha \subseteq \T_\omega \times \T_\omega$, we may thus
define, using the axiom of choice, an injection~$f$ from~$\zeta$ to
$(\T_\omega \cup \{\Box\})^4$ (recall that in set theory an
ordinal~$\zeta$ is the set of all ordinals less than~$\zeta$). If
$R_\alpha \setminus R_{<\alpha}$ is non-empty, then let $f(\alpha) =
\langle t_1, t_2, \Box, \Box \rangle$ where $\langle t_1, t_2 \rangle
\in R_\alpha \setminus R_{<\alpha}$ is chosen
arbitrarily. Analogously, if $\succ_\alpha \setminus \succ_{<\alpha}$
is non-empty, then let $f(\alpha) = \langle \Box, \Box, t_1, t_2
\rangle$ where $\langle t_1, t_2 \rangle \in\; \succ_\alpha \setminus
\succ_{<\alpha}$ is chosen arbitrarily. Since $R_\alpha \subseteq
R_{<\beta}$ and $\succ_\alpha\;\subseteq\;\succ_{<\beta}$ for $\alpha
< \beta$, we have $f(\alpha) \ne f(\beta)$, so~$f$ really is an
injection. But this implies that the cardinality of~$\zeta$ is not
greater than the cardinality of~$(\T_\omega \cup
\{\Box\})^4$. Contradiction.

Let~$\zeta$ be an ordinal such that $R_\zeta = R_{<\zeta}$ and
$\succ_\zeta\; = \;\succ_{<\zeta}$. We may assume without loss of
generality that also $\sim_\zeta\; = \;\sim_{<\zeta}$. In what follows
we will use the notations $R$, $\succ$, $\leadsto$, etc. for
$R_\zeta$, $\succ_\zeta$, $\leadsto_\zeta$, etc.

\medskip

Finally, we are ready to define the model $\M$ for $\I_\omega^c$.

\begin{ssdefinition}\label{r:def_model}
  The one-state classical illative Kripke model $\M$ is defined as
  follows. We take the combinatory algebra $\C$ of $\M$ to be the set
  of equivalence classes of $=_R$. We define the interpretation $I$ of
  $\M$ by $I(c) = [c]_R$. We define the set $\Ts$ of true elements of
  $\M$ by $\Ts = \{ d \in \C \;|\; \exists t \,.\, d = [t]_R \wedge t
  \leadsto \top \}$.
\end{ssdefinition}

\subsection{Correctness proof}

In this subsection we prove that the preceding lengthy definition of
$\M$ is actually correct, i.e., that $\M$ is a classical illative
Kripke model for $\I_\omega^c$.

Below we will silently use the following simple lemma, without
mentioning it explicitly every time.

\begin{sslemma}\label{r:lem_xi_unique}
  If $\lambda \vec{x} . \Xi t_1 t_2 \reduces \lambda \vec{x} . t$ then
  $t \equiv \Xi t_1' t_2'$ where $t_1 \reduces t_1'$ and $t_2 \reduces
  t_2'$. An analogous result holds when $\lambda \vec{x} . A_\tau t_1
  \reduces \lambda \vec{x} . t$ for $\tau \in \B$, and when $\lambda
  \vec{x} . L t_1 \reduces \lambda \vec{x} . t$. Here the reduction
  $\reduces$ may stand for any of $\reduces_{\le\alpha}$,
  $\reduces_{<\alpha}$, etc.
\end{sslemma}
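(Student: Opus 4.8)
The plan is to exploit the very restricted shape of the reduction rules. Across all the systems $R_\alpha$, $\widehat{R_\alpha}$ and $R_{<\alpha}$ involved, every reduction rule is either a rule of $\beta$- or $\eta$-reduction, or a rule of the form $c\,t \contr \rho_2$ with $c \in \Sigma_{\tau_1\rightarrow\tau_2}$ (Definition~\ref{r:def_reductions}). The three constants $\Xi$, $L$ and $A_\tau$ are primitive constants of $\Sigma^+$ lying outside $\bigcup_\tau \Sigma_\tau$, so none of them occurs as the head of the left-hand side of any reduction rule; moreover none of them is a $\lambda$-abstraction. Consequently the head constant of a term such as $\Xi t_1 t_2$ can never itself be contracted: a contraction either takes place strictly inside one of the arguments, or, via $\eta$, removes one of the leading abstractions together with a trailing argument. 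Making this observation precise is the heart of the proof.

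First I would prove the statement in the special case without the leading abstractions, namely that $\Xi t_1 t_2 \reduces u$ implies $u \equiv \Xi t_1' t_2'$ with $t_1 \reduces t_1'$ and $t_2 \reduces t_2'$, by induction on the length of the reduction. For a single contraction of $\Xi t_1 t_2 \equiv (\Xi t_1)\, t_2$ one checks that the redex can be neither the whole term nor the subterm $\Xi t_1$: their head $\Xi$ is neither a $\lambda$ nor a constant admitting a rule, and neither term is an abstraction, so neither is an $\eta$-redex. Hence the redex lies inside $t_1$ or inside $t_2$, yielding $\Xi t_1'' t_2$ or $\Xi t_1 t_2''$, and the inductive hypothesis closes the case. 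This part is clean precisely because the absence of a leading $\lambda$ rules out any $\eta$-interaction with the head.

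The remaining work, and the only genuine subtlety, is to lift this to $\lambda \vec{x}.\,\Xi t_1 t_2$, where a leading abstraction $\lambda x_n$ could in principle be consumed by an $\eta$-step once $t_2$ reduces to $x_n$ (with $x_n \notin FV(\Xi t_1)$), producing a term with one fewer abstraction. The key remark is that such a step cannot be undone: by the special case above the body stays headed by $\Xi$ and hence remains an application, never an abstraction, so the number of leading abstractions in front of the $\Xi$-head is non-increasing along any reduction and strictly decreases whenever an outer $\eta$-step fires. Since the reduct in the hypothesis is assumed to have the form $\lambda \vec{x}.\,t$ with the full prefix $\vec{x}$, this number is preserved, so no outer $\eta$-step occurred and every contraction took place strictly inside $t_1$ or $t_2$. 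Reading off the two argument positions of the fixed skeleton $\lambda \vec{x}.\,\Xi\,\Box_1\,\Box_2$ then gives $t \equiv \Xi t_1' t_2'$ with $t_1 \reduces t_1'$ and $t_2 \reduces t_2'$. The cases $\lambda \vec{x}.\,A_\tau t_1$ and $\lambda \vec{x}.\,L t_1$ are identical, as $A_\tau$ and $L$ are equally inert, and the argument is uniform over $\reduces_{\le\alpha}$, $\reduces_{<\alpha}$ and $\reduces_{=\alpha}$ because all their rules share the stated shape. I expect the bookkeeping around the $\eta$-steps and the leading abstractions to be the main obstacle; everything else is routine.
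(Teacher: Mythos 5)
Your proof is correct and follows essentially the same route as the paper's: the paper's entire argument is the observation that no reduction rule has $\Xi$, $L$ or $A_\tau$ at its head, so contractions can only occur inside $t_1$ and $t_2$. Your additional bookkeeping for outer $\eta$-steps (the number of leading abstractions is non-increasing and the body stays an application headed by the inert constant) is a sound way of making explicit a point the paper's one-sentence proof leaves implicit.
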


\begin{proof}
  This follows from the fact that there are no reduction rules which
  involve $\Xi$, $L$, or $A_\tau$ for $\tau \in \B$, so the reductions
  may happen only inside $t_1$ and $t_2$.
\end{proof}

Note that together with our convention stated in
Notation~\ref{r:convention_kt} regarding the meaning of~$H t_1$,
Lemma~\ref{r:lem_xi_unique} implies that if $H t_1 \reduces t$ then $t
\equiv H t_1'$ where $t_1 \reduces t_1'$.

The proof of the following lemma illustrates a pattern common to many
of the proofs below. We give this single proof in full, but when later
an argument follows this same pattern we treat only some of the cases
to spare the reader excessive tedious details.

\begin{sslemma}\label{r:lem_nu_context}
  If $x_1,\ldots,x_n$ are variables, $n \ge 1$, and $C$ is a context,
  then the following conditions hold:
  \begin{enumerate}
  \item if $t \reduces_{\le\alpha} t'$ and $t \equiv C[x_1\ldots x_n]$
    then $t' \equiv C'[x_1 \ldots x_n]$ and $C[t'']
    \reduces_{\le\alpha} C'[t'']$ for any term~$t''$, \label{r:lcsp_1}
  \item if $C[x_1 \ldots x_n] \succ_\alpha \rho$ then $C[t]
    \succ_\alpha \rho$ for any term~$t$, \label{r:lcsp_2}
  \item if $C[x_1 \ldots x_n] \sim_\alpha \tau$ then $C[t] \sim_\alpha
    \tau$ for any term~$t$. \label{r:lcsp_3}
  \end{enumerate}
\end{sslemma}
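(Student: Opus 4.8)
The plan is to prove the three statements simultaneously by transfinite induction on $\alpha$, exploiting the stratification of the mutually recursive definitions: within stage $\alpha$ I would establish (1) first, then (3), then (2), since each then uses only data that is already available. Concretely, the reduction relation $\contr_{\le\alpha}$ of $R_\alpha = R_{<\alpha}\cup\widehat{R_\alpha}$ refers to $\succ_{<\alpha}$ only through the side condition of the rules $c\,t\to\rho_2$, so (1) needs only part (2) of the induction hypothesis at stages $<\alpha$; the relation $\sim_\alpha$ refers only to $\leadsto_{<\alpha}$ and $\sim_{<\alpha}$, so (3) needs only the hypotheses at stages $<\alpha$; and $\succ_\alpha$ refers to $\sim_\alpha$ at the current stage together with $\leadsto_{<\alpha}$ and $\succ_{<\alpha}$, so (2) may use (3) at stage $\alpha$ and (1),(2),(3) at earlier stages. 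Throughout I would read a context as a term that may contain several (or zero) marked occurrences of the variable $x$, kept free by the standing no-capture convention.

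For (1) I would argue by induction on the length of $t\reduces_{\le\alpha}t'$ and analyse a single step $C[x]\contr_{\le\alpha}s$ according to the contracted rule. The essential point is that an occurrence of $x$ can never be the head of a redex (the heads of the rules $\beta$, $\eta$, $c\,t\to\rho_2$ are an abstraction or a primitive constant) and, being free, is never the target of a substitution; hence the contracted redex is always a feature of $C$ rather than of the box contents, and extra redexes that $t''$ might create in $C[t'']$ are simply left uncontracted. For a $\beta$- or $\eta$-step I would locate the box inside an immediate subterm and rebuild $C'$ so that $s\equiv C'[x]$, the no-capture convention guaranteeing that the $\eta$ side condition $y\notin FV(\cdot)$ survives the passage from $x$ to $t''$; a $\beta$-step may duplicate or erase box occurrences, which is harmless now that contexts may carry any number of boxes. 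For a step by $c\,t\to\rho_2$ with the box inside $t\equiv C_0[x]$, the side condition reads $C_0[x]\succ_{<\alpha}\rho_1$; invoking part (2) at the relevant stage $<\alpha$ gives $C_0[t'']\succ_{<\alpha}\rho_1$, so the same rule fires on $C[t'']$ and collapses to the closed canonical term $\rho_2$. In every case the step replays verbatim after substituting $t''$ for $x$, and composing yields $C[t'']\reduces_{\le\alpha}C'[t'']$.

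Parts (3) and (2) I would then obtain by direct case analysis on the clauses of Definition~\ref{r:def_sim} and Definition~\ref{r:def_succ}. The constant clauses $(\mathrm{A})$, $(\mathrm{H})$ and the reflexivity clause $\rho\succ_\alpha\rho$ are immediate, the relevant term being closed and hence boxless: for reflexivity I would appeal to Fact~\ref{r:fact_canonical}(2) together with the fact that the variable $x$ is not canonical, forcing $C\equiv\rho$. Every remaining clause matches $C[x]$ against a fixed outer shape ($K\,t_0$, $F\,t_1\,t_2$, $\Xi\,t_1\,t_2$, $H\,t_1$, and so on) whose head is a constant or $\Xi$, so by Lemma~\ref{r:lem_xi_unique} the box lies in the displayed subterms; I then write $t_i\equiv C_i[x]$ and discharge each premise for $C_i[t'']$ by the appropriate hypothesis: the $\sim$-premises by part (3), the premises of the form $C_i[x]\,t_3\leadsto_{<\alpha}\top$ (or $\bot$) by combining parts (1) and (2) at stages $<\alpha$, and the premise $H\,C[x]\succ_{<\alpha}\top$ of clause $(\Xi^\bot)$ by part (2) applied to the context $H\,C$. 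Reassembling the clause gives $C[t'']\sim_\alpha\tau$, respectively $C[t'']\succ_\alpha\rho$.

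I expect the main obstacle to be the bookkeeping in (1): keeping the stratified induction honest, so that the appeal to (2) inside (1) and to (3) inside (2) never becomes circular, and handling the duplication and erasure of box occurrences under $\beta$-reduction and under the collapsing rules $c\,t\to\rho_2$ while checking that the no-capture convention really does preserve the $\eta$ side condition. Once (1) guarantees that reductions replay uniformly in the box, the two relational parts (2) and (3) reduce to a routine clause-by-clause verification, which is why the authors later treat only representative cases of arguments following this pattern.
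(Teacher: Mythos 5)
Your proposal is correct and follows essentially the same route as the paper: a transfinite induction on $\alpha$ in which (1) is proved first by an inner induction on the reduction length (the only nontrivial case being a collapsing rule $c\,t\to\rho_2$ whose side condition $\succ_{<\alpha}$ is discharged by part (2) of the induction hypothesis), then (3) and (2) by clause-by-clause analysis of Definitions~\ref{r:def_sim} and~\ref{r:def_succ}, with (2) allowed to use (3) at the current stage. Your explicit accounting of why the stratified appeals to (1)--(3) are non-circular is exactly the dependency structure the paper's proof relies on implicitly.
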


\begin{proof}
  Induction on $\alpha$.

  First, we show~(\ref{r:lcsp_1}) by induction on the length of the
  reduction $C[x_1 \ldots x_n] \reduces_{\le\alpha} t'$. The only
  interesting case is when $c C[x_1 \ldots x_n] \contr_{\le\alpha}
  \rho_2$ by virtue of $C[x_1 \ldots x_n] \succ_{<\alpha} \rho_1$. But
  then by part~(\ref{r:lcsp_2}) of the IH we have $C[t'']
  \succ_{<\alpha} \rho_1$, so $c C[t''] \contr_{\le\alpha} \rho_2$.

  Next we shall verify~(\ref{r:lcsp_3}). If $C[x_1 \ldots x_n]
  \sim_\alpha \tau$ is obtained by rule~$(\mathrm{A})$
  or~$(\mathrm{H})$, then $C \equiv A_\tau$ for $\tau \in \B$ or $C
  \equiv H$, and the claim is obvious.

  If $C[x_1 \ldots x_n] \sim_\alpha \tau$ is obtained by
  rule~$(\mathrm{K}\omega)$ or~$(\mathrm{K}\varepsilon)$ then $\tau
  \in \{\omega, \varepsilon\}$ and $C[x_1 \ldots x_n]
  \leadsto_{<\alpha} c$ for $c \in \{\top, \bot\}$, i.e., $C[x_1 \ldots
  x_n] \reduces_{<\alpha} t' \succ_{<\alpha} c$. By
  part~(\ref{r:lcsp_1}) of the IH we obtain $t' \equiv C'[x_1 \ldots
  x_n]$ where $C[t] \reduces_{<\alpha} C'[t]$. Then by
  part~(\ref{r:lcsp_2}) of the IH we have $C'[t] \succ_{<\alpha}
  c$. Hence $C'[t] \leadsto_{<\alpha} c$, so $C'[t] \sim_\alpha \tau$
  by rule~$(\mathrm{K}\omega)$ or~$(\mathrm{K}\varepsilon)$.

  If $C[x_1 \ldots x_n] \sim_\alpha \tau$ is obtained by
  rule~$(\mathrm{F}\omega)$ then $\tau = \omega$, $C \equiv \lambda f
  . \Xi C_1 C_2$ and $C_1[x_1\ldots x_n] \sim_{<\alpha}
  \varepsilon$. By part~(\ref{r:lcsp_3}) of the IH we obtain $C_1[t]
  \sim_{<\alpha} \varepsilon$, and thus $C[t] \equiv \lambda f . \Xi
  C_1[t] C_2[t] \sim_\alpha \omega$ by rule~$(\mathrm{F}\omega)$.

  If $C[x_1 \ldots x_n] \sim_\alpha \tau$ is obtained by
  rule~$(\mathrm{F}\omega')$ then $\tau = \omega$, $C \equiv \Xi C_1$
  and $C_1[x_1\ldots x_n] \sim_{<\alpha} \varepsilon$. By
  part~(\ref{r:lcsp_3}) of the IH we obtain $C_1[t] \sim_{<\alpha}
  \varepsilon$, and thus $C[t] \equiv \Xi C_1[t] \sim_\alpha \omega$
  by rule~$(\mathrm{F}\omega')$.

  If $C[x_1 \ldots x_n] \sim_\alpha \tau$ is obtained by
  rule~$(\mathrm{F})$ then $\tau = \tau_1\to\tau_2$ and $C \equiv
  \lambda f . \Xi C_1 (\lambda x . C_2 (f x))$ where $C_1[x_1\ldots
  x_n] \sim_{<\alpha} \tau_1$ and $C_2[x_1 \ldots x_n] \sim_{<\alpha}
  \tau_2$. But then by part~(\ref{r:lcsp_3}) of the~IH we have $C_1[t]
  \sim_{<\alpha} \tau_1$ and $C_2[t] \sim_{<\alpha} \tau_2$, which
  implies $C[t] \equiv \lambda f . \Xi C_1[t] (\lambda x . C_2[t] (f
  x)) \sim_\alpha \tau$.

  If $C[x_1 \ldots x_n] \sim_\alpha \tau$ is obtained by
  rule~$(\mathrm{F'})$ then $\tau = \tau_1\to\tau_2$ and $C \equiv
  \lambda f . \Xi C_1 (\lambda x . C_2[z/f x])$ where $C_1[x_1\ldots
  x_n] \sim_{<\alpha} \tau_1$ and $\lambda z . C_2[x_1 \ldots x_n]
  \sim_{<\alpha} \tau_2$. But then by part~(\ref{r:lcsp_3}) of the~IH
  we have $C_1[t] \sim_{<\alpha} \tau_1$ and $\lambda z . C_2[t] \sim_{<\alpha}
  \tau_2$, which implies $C[t] \equiv \lambda f . \Xi C_1[t] (\lambda
  x . C_2[z/f x][t] \sim_\alpha \tau$ (recall that by our convention
  regarding contexts, the free variables of~$t$ are assumed not to
  become bound in~$C[t]$).

  Finally, if $C[x_1 \ldots x_n] \sim_\alpha \tau$ is obtained by
  rule~$(\mathrm{F''})$ then $\tau = \tau_1\to\tau_2$ and $C \equiv
  \lambda f . \Xi C_1 C_2$ where $C_1[x_1\ldots x_n] \sim_{<\alpha}
  \tau_1$ and $C_2[x_1 \ldots x_n] \sim_{<\alpha} \tau_2 \in
  \{\omega,\varepsilon\}$. But then by parts~(\ref{r:lcsp_1})
  and~(\ref{r:lcsp_3}) of the~IH we have $C_1[t] \sim_{<\alpha}
  \tau_1$ and $C_2[t] \sim_{<\alpha} \tau_2$, which implies $C[t]
  \equiv \lambda f . \Xi C_1[t] C_2[t] \sim_\alpha \tau$.

  Now we check condition~(\ref{r:lcsp_2}). Suppose $C[x_1 \ldots x_n]
  \succ_\alpha \rho$ for a canonical term~$\rho$. If $C[x_1 \ldots
  x_n] \equiv \rho$ then the claim is obvious because canonical terms
  are closed, so $C[t] \equiv C \equiv C[x_1 \ldots x_n] \equiv
  \rho$. If the canonical type of~$\rho$ is $\tau_1\to\tau_2$ then by
  definition for any $t_1 \in \T_{\tau_1}$ we have $C[x_1 \ldots x_n]
  t_1 \leadsto_{<\alpha} \F(\rho)(t_1)$. By parts~(\ref{r:lcsp_2})
  and~(\ref{r:lcsp_3}) of the IH and by the definition of
  $\leadsto_{<\alpha}$ we obtain $C[t] t_1 \leadsto_{<\alpha}
  \F(\rho)(t_1)$. Hence $C[t] \succ_\alpha \rho$.

  Suppose $\rho \equiv \top$. If $C[x_1 \ldots x_n] \not\succ_0 \top$
  then one of the conditions~$(A_\tau^\top)$, $(\Xi^\top)$ or
  $(L^\top)$ in Definition~\ref{r:def_succ} must
  hold. If~$(A_\tau^\top)$ holds then the claim is obvious, because
  $C[x_1 \ldots x_n]$ is closed.

  If~$(\Xi^\top)$ holds then $C \equiv \Xi C_1 C_2$ and there exists
  $\tau$ such that $C_1[x_1 \ldots x_n] \sim_\alpha \tau$ and for all
  $t' \in \T_\tau$ we have $C_2[x_1 \ldots x_n] t' \leadsto_{<\alpha}
  \top$. By claim~(\ref{r:lcsp_3}), which has already been verified in
  this inductive step, we obtain $C_1[t] \sim_\alpha \tau$. By
  parts~(\ref{r:lcsp_1}) and~(\ref{r:lcsp_2}) of the IH we conclude
  that for all $t' \in \T_\tau$ we have $C_2[t] t' \leadsto_{<\alpha}
  \top$. Therefore $C[t] = \Xi C_1[t] C_2[t] \succ_\alpha \top$.

  If condition~$(L^\top)$ holds then $C \equiv L C_1$ and $C_1[x_1
  \ldots x_n] \sim_\alpha \tau$ for some type~$\tau$. By
  calim~(\ref{r:lcsp_3}), which has already been verified in this
  inductive step, we obtain $C_1[t] \sim_\alpha \tau$. Therefore $C[t]
  \succ_\alpha \top$.

  It remains to verify the case $C[x_1 \ldots x_n] \succ_\alpha
  \bot$. Assuming $C[x_1 \ldots x_n] \not\succ_0 \bot$, the
  condition~$(\Xi^\bot)$ must hold. Then the claim again follows by
  applying the already verified condition~(\ref{r:lcsp_3}) and
  parts~(\ref{r:lcsp_1}) and~(\ref{r:lcsp_2}) of the inductive
  hypothesis.
\end{proof}

\begin{sscorollary}\label{r:cor_subst}
  If $t_1 \contr_{=\alpha} t_1'$ and the free variables of~$t_2$ do
  not become bound in~$t_1[x/t_2]$, then $t_1[x/t_2] \contr_{=\alpha}
  t_1'[x/t_2]$.
\end{sscorollary}

\begin{proof}
  If $\alpha = 0$ then this is obvious. If $\alpha > 0$ then assume
  without loss of generality that $c t_1 \contr_{=\alpha} \rho_2
  \equiv t_1'$ by virtue of $t_1 \succ_{<\alpha} \rho_1$. But then by
  part~(\ref{r:lcsp_2}) of Lemma~\ref{r:lem_nu_context} we have
  $t_1[x/t_2] \succ_{<\alpha} \rho_1$, so $c t_1[x/t_2]
  \contr_{=\alpha} \rho_2 \equiv t_1'[x/t_2]$, since the canonical
  term~$\rho_2$ is closed.
\end{proof}

\begin{sslemma}\label{r:lem_k_sim}
  If $K t \sim_\alpha \tau$ then $\tau = \omega$ or $\tau =
  \varepsilon$.
\end{sslemma}

\begin{proof}
  Induction on $\alpha$. The non-obvious case is when $K t \equiv
  \lambda f . \Xi t_1 (\lambda x . t_2[z/fx]) \sim_\alpha
  \tau_1\to\tau_2$ is obtained by rule~$(\mathrm{F'})$, and $t_1
  \sim_{<\alpha} \tau_1$ for $\tau_1 \ne \varepsilon$, and $\lambda z
  . t_2 \sim_{<\alpha} \tau_2$. But then $t \equiv \Xi t_1 (\lambda x
  . t_2[z/fx])$ and $z \notin FV(t_2)$. Since $K t_2 \sim_{<\alpha}
  \tau_2$ by the inductive hypothesis we conclude $\tau_2 = \omega$ or
  $\tau_2 = \varepsilon$. In either case $\tau = \omega$ or $\tau =
  \varepsilon$.
\end{proof}

The next lemma and Lemma~\ref{r:lem_context} are the two key technical
lemmas justifying the correctness of our model construction.

\begin{sslemma}\label{r:lem_commute}
  For all ordinals $\alpha$, $\beta$ the following conditions hold:
  \begin{enumerate}
  \item $R_\alpha$ and $R_\beta$ commute, i.e., if $t
    \reduces_{\le\alpha} t_1$ and $t \reduces_{\le\beta} t_2$ then
    $t_1 \reduces_{\le\beta} t'$ and $t_2 \reduces_{\le\alpha} t'$ for
    some term~$t'$, \label{r:comm_1} \vspace{-0.5em}
  \item if $t_1 \succ_\alpha \rho$ and $t_1 \reduces_{\le\beta}
    t_2$ then $t_2 \succ_\alpha \rho$, \label{r:comm_2}
  \item if $t \succ_\alpha \rho_1$, $t \succ_\beta \rho_2$ and
    $\rho_1, \rho_2 \in \T_\tau$ then $\rho_1 \equiv
    \rho_2$, \label{r:comm_3}
  \item if $t_1 \sim_\alpha \tau$ and $t_1 \reduces_{\le\beta} t_2$
    then $t_2 \sim_\alpha \tau$, \label{r:comm_4}
  \item if $t \sim_\alpha \tau_1$ and $t \sim_\beta \tau_2$ then
    $\tau_1 = \tau_2$, \label{r:comm_5}
  \item if $t \sim_\alpha \omega$ then $t r \leadsto_{<\alpha} \top$
    for all $r$, and if $t \sim_\alpha \varepsilon$ then $t r
    \leadsto_{<\alpha} \bot$ for all $r$. \label{r:comm_6}
  \end{enumerate}
\end{sslemma}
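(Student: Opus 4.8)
The plan is to prove all six statements simultaneously by transfinite induction, the governing measure being the largest ordinal that occurs as a subscript of a $\succ$- or $\sim$-fact in the statement at hand; the reduction levels $\le\alpha$, $\le\beta$ are then handled by ordinary inner inductions on reduction length. This is legitimate because the clauses defining $R_\gamma$, $\sim_\gamma$ and $\succ_\gamma$ mention only $\succ_{<\gamma}$, $\sim_{<\gamma}$ and $\leadsto_{<\gamma}$, so that when we analyse how a fact at stage $\gamma$ was obtained, its side conditions live at strictly smaller stages. Within a fixed stage $\gamma$ I would establish the six claims in an order respecting their internal dependencies: first (\ref{r:comm_1}), then (\ref{r:comm_4}) and (\ref{r:comm_5}), and finally (\ref{r:comm_2}), (\ref{r:comm_3}) and (\ref{r:comm_6}); each step then appeals only to earlier claims at stage $\gamma$ or to arbitrary claims at stages $<\gamma$.

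For (\ref{r:comm_1}) I would tile single-step peaks. Disjoint and nested redexes are routine; the only genuine overlaps are (i) a $\beta$- or $\eta$-step inside the argument of a rule $c\,t\to\rho_2$ fired by $t\succ_{<\gamma}\rho_1$, which closes because (\ref{r:comm_2}) at the smaller stage gives $t'\succ_{<\gamma}\rho_1$ so that $c\,t'\to\rho_2$ is still available, and (ii) two instances $c\,t\to\rho_2$, $c\,t\to\rho_2'$ applied to the same redex, where $\rho_1,\rho_1'\in\T_{\tau_1}$ forces $\rho_1\equiv\rho_1'$ by (\ref{r:comm_3}), hence $\rho_2\equiv\rho_2'$. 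Statements (\ref{r:comm_2}) and (\ref{r:comm_4}) are then proved by case analysis on the derivation of $t_1\succ_\alpha\rho$ (resp. $t_1\sim_\alpha\tau$), using Lemma~\ref{r:lem_xi_unique} and the remark following it to track the syntactic shape along the reduction, thereby reducing each case to the preservation of $\leadsto_{<\alpha}$ under $\reduces_{\le\beta}$, which is exactly (\ref{r:comm_1}) combined with (\ref{r:comm_2}) at smaller stages. Canonical and base-postulate witnesses such as $L A_\tau$, $L H$, $A_\tau c$ and $H c$ are $R_\beta$-normal, so those subcases are immediate.

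The core of the lemma is the consistency pair (\ref{r:comm_3}) and (\ref{r:comm_5}). For (\ref{r:comm_3}) the cases $\tau\in\B$ are trivial (the only applicable clause of Definition~\ref{r:def_succ} is $\rho\succ_\alpha\rho$, so $t\succ\rho$ forces $t\equiv\rho$), and the function-type case reduces, via (\ref{r:comm_1}), (\ref{r:comm_2}) and the inductive hypothesis for (\ref{r:comm_3}) at the smaller type $\tau_2$, to agreement of $\F(\rho_1)$ and $\F(\rho_2)$ on all of $\T_{\tau_1}$, whence $\rho_1\equiv\rho_2$ by injectivity of $\F$. The delicate case is $\tau=o$, where one must exclude $t\succ_\alpha\top$ together with $t\succ_\beta\bot$. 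The only syntactic form in which these can collide is $t\equiv\Xi t_1 t_2$, with $\top$ from $(\Xi_i^\top)$ and $\bot$ from $(\Xi^\bot)$; I would use (\ref{r:comm_5}) to identify the two witnessing types, so that the universal $\leadsto_{<\alpha}\top$ of $(\Xi_i^\top)$ and the existential $\leadsto_{<\beta}\bot$ of $(\Xi^\bot)$ apply to one common argument $t_2 t_3$, and then close via (\ref{r:comm_1}), (\ref{r:comm_2}) and the inductive hypothesis for (\ref{r:comm_3}) to force $\top\equiv\bot$, a contradiction. Statement (\ref{r:comm_5}) is a parallel clash analysis driven by the head symbol of $t$: the head selects the admissible $\sim$-rules, and the only overlaps ($(\mathrm{K}\omega)$ versus $(\mathrm{K}\varepsilon)$, and $(\mathrm{F})$ versus $(\mathrm{F}\omega)$) are ruled out or reconciled using the conventions on $\varepsilon$ and $\omega$ together with (\ref{r:comm_1})--(\ref{r:comm_3}). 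I expect this $\tau=o$ subcase, and its entanglement with confluence, preservation and type-uniqueness, to be the main obstacle: it is precisely where one shows that the self-referential truth clauses cannot render a single term both true and false, i.e. where Curry's paradox is blocked (the reason the quantifier clauses carry the hypothesis $t_1\sim_\alpha\tau$, the semantic analogue of the $L X$ restriction).

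Finally, (\ref{r:comm_6}) is handled by inspecting the derivation of $t\sim_\alpha\omega$ (resp. $\varepsilon$). The $(\mathrm{K}\omega)$ and $(\mathrm{K}\varepsilon)$ cases are immediate since $(K t')\,r$ $\beta$-reduces to $t'$. In the $(\mathrm{F})$/$(\mathrm{F}\omega)$ cases, $(F t_1 t_2)\,r$ reduces to $\Xi t_1(\lambda x.t_2(r x))$; when the result type is $\omega$ this becomes $\top$ by $(\Xi_i^\top)$, using the inductive hypothesis for (\ref{r:comm_6}) on the body (vacuously when $t_1\sim\varepsilon$, as $\T_\varepsilon=\emptyset$). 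The genuinely fiddly subcase is $F t_1 t_2\sim_\alpha\varepsilon$ from $(\mathrm{F})$ with $t_2\sim\varepsilon$ and $t_1\sim\tau_1\ne\varepsilon$: to fire $(\Xi^\bot)$ one needs a witness $t_3\in\T_{\tau_1}$ (which exists since $\T_{\tau_1}\ne\emptyset$) and, as an extra premise, $H(\Xi t_1(\lambda x.t_2(r x)))\succ\top$. For this I would first record the auxiliary fact that $u\succ\bot$ implies $H u\succ\top$ --- immediate, since either $u\equiv\bot$ and $H\bot\succ_0\top$ by postulate~(\ref{r:rule_t_0_4}), or $(\Xi^\bot)$ itself supplied $H u\succ\top$ as a premise --- and then feed it through $(\Xi_H^\top)$.
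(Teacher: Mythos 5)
Your overall architecture matches the paper's: a simultaneous transfinite induction with the six clauses settled in a dependency-respecting order at each stage, a tiling argument for commutation whose only critical overlaps are resolved by (\ref{r:comm_2}) and (\ref{r:comm_3}) at smaller stages, and the $\tau=o$ case of (\ref{r:comm_3}) closed by using (\ref{r:comm_5}) to force the $(\Xi_i^\top)$ and $(\Xi^\bot)$ witnesses into a common type. Your treatment of the $\varepsilon$ half of (\ref{r:comm_6}), including the auxiliary observation that $u\succ\bot$ implies $Hu\succ\top$, is in fact more explicit than the paper's, which dismisses that half as ``analogous.''

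There is, however, a genuine gap in your clause (\ref{r:comm_5}). You assert that the head symbol of $t$ selects the admissible $\sim$-rules and that the only overlaps are $(\mathrm{K}\omega)$ vs.\ $(\mathrm{K}\varepsilon)$ and $(\mathrm{F})$ vs.\ $(\mathrm{F}\omega)$. This is false: under Notation~\ref{r:convention_kt} both $K t_0$ and $F t_1 t_2$ are abbreviations for lambda-abstractions, and $K t_0 \equiv F t_1 t_2$ holds precisely when $t_0 \equiv \Xi t_1 r$ with $t_2 \equiv K t_2'$, so a single term can be typed $\omega$ or $\varepsilon$ by a $\mathrm{K}$-rule and simultaneously $\tau_1\to\tau_2$ by an $\mathrm{F}$-rule. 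This cross-overlap is the actual crux of (\ref{r:comm_5}): the paper resolves it with a dedicated sub-induction showing that $K t_0 \sim_\gamma \tau$ forces $\tau\in\{\omega,\varepsilon\}$, and then rules out $\omega$ against $\varepsilon$ by invoking (\ref{r:comm_6}) \emph{at the current stage} (deriving $t\,t_3\leadsto\top$ and $t\,t_3\leadsto\bot$ and contradicting (\ref{r:comm_3})). This also breaks your proposed ordering, which places (\ref{r:comm_5}) before (\ref{r:comm_6}): you would need (\ref{r:comm_6}) available first (as the paper arranges: (\ref{r:comm_1}), (\ref{r:comm_4}), (\ref{r:comm_2}), (\ref{r:comm_6}), (\ref{r:comm_5}), (\ref{r:comm_3})), and you would need the missing sub-induction on $K t_0$. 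Without both, the proof of (\ref{r:comm_5}) does not go through, and since (\ref{r:comm_3}) at type $o$ leans on (\ref{r:comm_5}), the consistency core of the lemma is left unsupported.
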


\begin{proof}
  Induction on pairs $\langle \alpha, \beta \rangle$ ordered
  lexicographically. Together with every condition we show its dual,
  i.e., the condition with~$\alpha$ and~$\beta$ exchanged. We give
  proofs only for the original conditions, but it can be easily seen
  that in every case the dual condition follows by exactly the same
  proof with~$\alpha$ and~$\beta$ exchanged. Note that for a proof of
  a condition to be a proof of its dual, it suffices that we never use
  the inductive hypothesis with~$\beta$ increased.

  %% NOTE: we may never increase \beta, because we need to prove the
  %% dual conditions

  First note that conditions~(\ref{r:comm_1}) and~(\ref{r:comm_2})
  imply that if $t_1 \leadsto_{\alpha} \rho$ and $t_1
  \reduces_{\le\beta} t_2$, then $t_2 \leadsto_{\alpha} \rho$. Indeed,
  if $t_1 \reduces_{\le\alpha} t_1' \succ_{\alpha} \rho$ and $t_1
  \reduces_{\le\beta} t_2$, then by~(\ref{r:comm_1}) we have $t_2
  \reduces_{\le\alpha} t_2'$ and $t_1' \reduces_{\le\beta}
  t_2'$. Hence by~(\ref{r:comm_2}) it follows that $t_2'
  \succ_{\alpha} \rho$, so $t_2 \leadsto_{\alpha} \rho$.

  Instead of~(\ref{r:comm_1}) we prove a stronger claim that
  $\widehat{R_\alpha}$ and $\widehat{R_\beta}$
  commute. Condition~(\ref{r:comm_1}) follows from this claim by a
  simple tiling argument, similar to the proof of the Hindley-Rosen
  lemma.

  %% NOTE: lexicographic order: $\langle \alpha_1, \beta_1 \rangle <
  %% \langle \alpha_2, \beta_2 \rangle$ iff $\alpha_1 < \alpha_2$, or
  %% $\alpha_1 = \alpha_2$ and $\beta_1 < \beta_2$ (I always confuse
  %% such things).
  %% NOTE: We need the dual conditions to verify (1)

  If $\alpha = \beta = 0$ then the claim is obvious, because
  $\widehat{R_0} = R_0$ is the ordinary
  $\lambda\beta\eta$-calculus. We therefore check that~$R_{0}$
  commutes with~$\widehat{R_\alpha}$ for $\alpha > 0$. We show that if
  $t \contr_{=\alpha} t_1$ and $t \contr_{\beta\eta} t_2$ then there
  exists $t_3$ such that $t_1 \contr_{\beta\eta}^\equiv t_3$ and $t_2
  \reduces_{=\alpha} t_3$. The claim then follows by a simple diagram
  chase. First suppose $t \equiv (\lambda x . r_1) r_2 \contr_\beta
  r_1[x/r_2] \equiv t_2$ and $r_1 \contr_{=\alpha} r_1'$. Then by
  Corollary~\ref{r:cor_subst} we have $r_1[x/r_2] \contr_{=\alpha}
  r_1'[x/r_2]$. Also obviously $(\lambda x . r_1') r_2 \contr_{\beta}
  r_1'[x/r_2]$. If $t \equiv (\lambda x . r_1) r_2 \contr_\beta
  r_1[x/r_2] \equiv t_2$ and $r_2 \contr_{=\alpha} r_2'$ then the
  claim is obvious. Suppose $t \equiv \lambda x . r x \contr_\eta r$
  where $x \notin FV(r)$. The only interesting case is when $r \equiv
  c$ and $c x \contr_{=\alpha} \rho_2$ by virtue of~$x \succ_{<\alpha}
  \rho_1$. But then by part~(2) of Lemma~\ref{r:lem_nu_context} we
  have $\rho' \succ_{<\alpha} \rho_1$ for $\rho' \ne \rho_1$ with
  $\rho'$ of the same canonical type as~$\rho_1$. This is, however,
  impossible by part~(\ref{r:comm_3}) of the~IH. Without loss of
  generality, the only remaining case is $t \equiv c t'
  \contr_{=\alpha} t_1 \equiv \F(c)(\rho)$, $t' \succ_{<\alpha} \rho$,
  $t_2 \equiv c t_2'$, and $t' \contr_{\beta\eta} t_2'$. By
  part~(\ref{r:comm_2}) of the IH we obtain $t_2' \succ_{<\alpha}
  \rho$. Therefore $t_2 \equiv c t_2' \contr_{=\alpha} \F(c)(\rho)
  \equiv t_1$.

  We now check that $\widehat{R_\alpha}$ commutes with
  $\widehat{R_\beta}$ for $\alpha, \beta > 0$. It suffices to show
  that if $t \contr_{=\alpha} t_1$ and $t \contr_{=\beta} t_2$ then
  there exists $t_3$ such that $t_1 \contr_{=\beta}^\equiv t_3$ and
  $t_2 \contr_{=\alpha}^\equiv t_3$. If the redexes do not overlap
  then this is obvious. Suppose they overlap at the root, i.e., $t
  \equiv c t'$, $c t' \contr_{=\alpha} t_1 \equiv \F(c)(\rho_1)$ where
  $t' \succ_{<\alpha} \rho_1$, and $c t' \contr_{=\beta} t_2 \equiv
  \F(c)(\rho_2)$ where $t' \succ_{<\beta} \rho_2$. But then $\rho_1$
  and $\rho_2$ are canonical terms of the same type, which is
  determined by the type of $c$. So by part~(\ref{r:comm_3}) of the IH
  we obtain $\rho_1 \equiv \rho_2$. Hence $t_1 \equiv t_2$. If the
  overlap does not happen at the root, then without loss of generality
  $t \equiv c t'$, $c t' \contr_{=\alpha} t_1 \equiv \F(c)(\rho)$
  where $t' \succ_{<\alpha} \rho$, $t_2 \equiv c t_2'$, and $t'
  \contr_{=\beta} t_2'$. By part~(\ref{r:comm_2}) of the IH we obtain
  $t_2' \succ_{<\alpha} \rho$, so $t_2 \equiv c t_2' \contr_{=\alpha}
  \F(c)(\rho) \equiv t_1$.

  Now we shall prove~(\ref{r:comm_4}). If $t_1 \equiv A_\tau
  \sim_\alpha \tau$ for $\tau \in \B$ or $t_1 \equiv H$, then the
  claim is obvious. If $t_1 \equiv K t_1' \sim_\alpha \omega$ and
  $t_1' \leadsto_{<\alpha} \top$, then $t_2 = K t_2'$, $t_1'
  \reduces_{\le\beta} t_2'$, and by parts~(\ref{r:comm_1})
  and~(\ref{r:comm_2}) of the IH we have $t_2' \leadsto_{<\alpha}
  \top$. Hence $t_2 \sim_\alpha \omega$. If $t_1 \equiv K t_1'
  \sim_\alpha \varepsilon$ and $t_1' \leadsto_{<\alpha} \bot$ the
  proof is analogous.

  If $t_1 \sim_\alpha \tau_1\to\tau_2$ follows by~$(\mathrm{F})$ then
  $t_1 \equiv \lambda f . \Xi t_1^1 (\lambda x . t_1^2 (f x))$ where
  $t_1^1 \sim_{<\alpha} \tau_1$ and $t_1^2 \sim_{<\alpha}
  \tau_2$. Without loss of generality, we may assume $t_1
  \to_{\le\beta} t_2$, i.e., the reduction $t_1 \reduces_{\le\beta}
  t_2$ consists of a single step. Then $t_2 \equiv \lambda f . \Xi
  t_2^1 s$ with $t_1^1 \to_{\le\beta}^\equiv t_2^1$ and $\lambda x
  . t_1^2 (f x) \to_{\le\beta}^\equiv s$. By the IH we have $t_2^1
  \sim_{<\alpha} \tau_1$. If $t_1^2 \equiv \lambda z . s_1$ and $s
  \equiv \lambda x . s_2[z/f x]$ then $t_2 \sim_\alpha
  \tau_1\to\tau_2$ by~$(\mathrm{F'})$. It is impossible that $t_1^2 (f
  x)$ is a redex with~$t_1^2$ a constant. Indeed, then $f x
  \succ_{<\beta} \rho$ for some canonical~$\rho$. Using the definition
  of~$\succ$ and noting that a term of the form $f x w_1 \ldots w_k$
  is not a $\to_\gamma$-redex for any~$\gamma$ because~$f$ is a
  variable, we may conclude that $f x w_1 \ldots w_n \succ_{\gamma}
  \rho'$ for some~$\gamma,w_1,\ldots,w_n$ and some canonical~$\rho'$
  of type~$o$ or base type. But this contradicts the definition
  of~$\succ_\gamma$. Therefore, the only remaining possibility is $s
  \equiv \lambda x . t_2^2 (f x)$ with $t_1^2 \to_{\le\beta}^\equiv
  t_2^2$. By the IH we obtain $t_2^2 \sim_{<\alpha} \tau_2$. Therefore
  $t_2 \sim_\alpha \tau_1\to\tau_2$ by~$(\mathrm{F})$.

  If $t_1 \sim_\alpha \tau_1\to\tau_2$ follows by~$(\mathrm{F'})$ then
  $t_1 \equiv \lambda f . \Xi t_1^1 (\lambda x . t_1^2[z/fx])$ where
  $t_1^1 \sim_{<\alpha} \tau_1$ and $\lambda z . t_1^2 \sim_{<\alpha}
  \tau_2$. Without loss of generality, we may assume $t_1
  \to_{\le\beta} t_2$, i.e., the reduction $t_1 \reduces_{\le\beta}
  t_2$ consists of a single step. By Lemma~\ref{r:lem_xi_unique} we
  have $t_2 \equiv \lambda f . \Xi t_2^1 s_2$ where $t_1^1
  \to_{\le\beta}^\equiv t_2^1$ and $\lambda x . t_1^2[z/fx]
  \to_{\le\beta}^\equiv s_2$. We show that $s_2 \equiv \lambda x
  . t_2^2[z/fx]$ with $t_1^2 \reduces_{\le\beta} t_2^2$. Suppose the
  contraction in $\lambda x . t_1^2[z/fx]$ occurs at the root. Then
  this must be an $\eta$-contraction, and because $x \notin FV(t_1^2)$
  we have $t_1^2 \equiv z$. But then $\lambda z . z \sim_{<\alpha}
  \tau_2$. By inspecting the definition of~$\sim_{<\alpha}$ this is
  seen to be impossible. Hence the contraction does not occur at the
  root, and thus it follows from Lemma~\ref{r:lem_xi_unique} and
  Lemma~\ref{r:lem_nu_context} that $s_2 \equiv \lambda x
  . t_2^2[z/fx]$ with $t_1^2 \reduces_{\le\beta} t_2^2$. By the IH we
  obtain $t_2^1 \sim_{<\alpha} \tau_1$ and $\lambda z . t_2^2
  \sim_{<\alpha} \tau_2$. Thus $t_2 \sim_\alpha \tau_1\to\tau_2$.

  If $t_1 \sim_\alpha \tau_1\to\tau_2$ follows by~$(\mathrm{F''})$
  then $t_1 \equiv \lambda f . \Xi t_1^1 t_1^2$ with $t_1^1
  \sim_{<\alpha} \tau_1$ and $t_1^2 \sim_{<\alpha} \tau_2$, with $f
  \notin FV(t_1^2)$. Since $t_1^2 \not\equiv f$ we have $t_2 \equiv
  \lambda f . \Xi t_2^1 t_2^2$ with $t_1^1 \reduces_{\le\beta} t_2^1$
  and $t_1^2 \reduces_{\le\beta} t_2^2$. By the IH we obtain $t_2^1
  \sim_{<\alpha} \tau_1$ and $t_2^2 \sim_{<\alpha} \tau_2$. Thus $t_2
  \sim_\alpha \tau_1\to\tau_2$.

  If $t_1 \sim_\alpha \omega$ follows by~$(\mathrm{F}\omega)$ then
  $t_1 \equiv \lambda f . \Xi t_1^1 t_1^2$ and $t_1^1 \sim_{<\alpha}
  \varepsilon$. Without loss of generality we assume $t_1
  \to_{\le\beta} t_2$. There are two possibilities.
  \begin{itemize}
  \item $t_2 \equiv \lambda f . \Xi t_2^1 t_2^2$ with $t_1^1
    \reduces_{\le\beta} t_2^1$ and $t_1^2 \reduces_{\le\beta}
    t_2^2$. Then $t_2^1 \sim_{<\alpha} \varepsilon$ by the~IH, so $t_2
    \sim_\alpha \omega$ by~$(\mathrm{F}\omega)$.
  \item $t_2 \equiv \Xi t_1^1$. Then $t_2 \sim_\alpha \omega$ follows
    by~$(\mathrm{F}\omega')$.
  \end{itemize}

  If $t_1 \sim_\alpha \omega$ follows by~$(\mathrm{F}\omega')$ then
  $t_1 \equiv \Xi t_1'$, $t_1' \sim_{<\alpha} \varepsilon$ and $t_2
  \equiv \Xi t_2'$ with $t_1' \reduces_{\le\beta} t_2'$. Then $t_2'
  \sim_{<\alpha} \varepsilon$ by the~IH. Thus $t_2 \sim_\alpha
  \omega$.

  We show~(\ref{r:comm_2}). If $t_1 \equiv \rho$ then $t_1$ is in
  $R_\beta$-normal form, so there is nothing to prove. If $t_1
  \not\equiv \rho$, $t_1 \succ_{\alpha} \rho$ and $t_1
  \reduces_{\le\beta} t_2$, where $\rho \in \T_{\tau_1\to\tau_2}$,
  then by definition for all $\rho_1 \in \T_{\tau_1}$ we have $t_1
  \rho_1 \leadsto_{<\alpha} \rho_2$, where $\rho_2 \equiv
  \F(\rho)(\rho_1)$. But then by parts~(\ref{r:comm_1})
  and~(\ref{r:comm_2}) of the inductive hypothesis $t_2 \rho_1
  \leadsto_{<\alpha} \rho_2$, so $t_2 \succ_\alpha \rho$. Therefore
  suppose $t_1 \succ_\alpha \top$. When $t_1 \succ_\alpha \bot$ the
  argument is similar. If $\alpha = 0$ then the claim is obvious,
  because the right sides of the identities in the postulates for $t_1
  \succ_0 \top$ are normal forms. If $\alpha > 0$ then assume $t_1
  \reduces_{\le\beta} t_2$, $t_1 \equiv \Xi t_1^1 t_1^2$ and
  condition~$(\Xi^\top)$ in the definition of~$t_1 \succ_\alpha \top$
  is satisfied, i.e., there exists~$\tau$ s.t. $t_1^1 \sim_\alpha
  \tau$ and for all $t_3 \in \T_\tau$ we have $t_1^2 t_3
  \leadsto_{<\alpha} \top$. When any of the other conditions in the
  definition of~$t_1 \succ_\alpha \top$ is satisfied instead
  of~$(\Xi^\top)$, then the proof is analogous. By
  Lemma~\ref{r:lem_xi_unique} we have $t_2 \equiv \Xi t_2^1 t_2^2$
  where $t_1^1 \reduces_{\le\beta} t_2^1$ and $t_1^2
  \reduces_{\le\beta} t_2^2$. By~(\ref{r:comm_4}), which has already
  been verified in this inductive step, we obtain $t_2^1 \sim_\alpha
  \tau$. It therefore suffices to check that for all $t_3 \in \T_\tau$
  we have $t_2^2 t_3 \leadsto_{<\alpha} \top$. But for $t_3 \in
  \T_\tau$ obviously $t_1^2 t_3 \leadsto_{<\alpha} \top$, so $t_2^2
  t_3 \leadsto_{<\alpha} \top$ by parts~(\ref{r:comm_1})
  and~(\ref{r:comm_2}) of the IH.

  We show~(\ref{r:comm_6}). Suppose $t \sim_\alpha \omega$. When $t
  \sim_\alpha \varepsilon$ the argument is similar. If $t \sim_\alpha
  \omega$ is obtained by rule $(\mathrm{K\omega})$ then the claim is
  obvious. If $t \sim_\alpha \omega$ is obtained by~$(\mathrm{F})$
  then $t \equiv \lambda f . \Xi t_1 (\lambda x . t_2 (f x))$ with $f,
  x \notin FV(t_1,t_2)$, $t_1 \sim_{<\alpha} \tau_1$ and $t_2
  \sim_{<\alpha} \tau_2$. Because $\tau=\omega$ we must have
  $\tau_1=\varepsilon$ or $\tau_2=\omega$. Since $t r \to_\beta \Xi
  t_1 (\lambda x . t_2 (r x))$ it suffices to show $\Xi t_1 (\lambda x
  . t_2 (r x)) \succ_{<\alpha} \top$. If $\tau_1=\varepsilon$ then
  this follows from~$(\Xi^\top)$ because $\T_\varepsilon =
  \emptyset$. So assume $\tau_2=\omega$. Let $\gamma < \alpha$ be such
  that $t_1 \sim_\gamma \tau_1$ and $t_2 \sim_\gamma \omega$. Let $t_3
  \in \T_{\tau_1}$. By part~(\ref{r:comm_6}) of the~IH we have
  $(\lambda x . t_2 (r x)) t_3 \to_\beta t_2 (r t_3)
  \leadsto_{<\gamma} \top$. Hence $\Xi t_1 t_2 \succ_{<\alpha} \top$
  by~$(\Xi^\top)$. If $t \sim_\alpha \omega$ is obtained
  by~$(\mathrm{F''})$ then the argument is analogous to the case
  for~$(\mathrm{F})$. If the derivation of $t \sim_\alpha \omega$ is
  by $(\mathrm{F\omega})$ then $t \equiv \lambda f . \Xi t_1 t_2$ with
  $t_1 \sim_{<\alpha} \varepsilon$. Then $ t r \to_\beta \Xi t_1
  (t_2[f/r]) \succ_{<\alpha} \top$ by definition. When $t \sim_\alpha
  \omega$ is obtained by~$(\mathrm{F\omega'})$ the argument is
  analogous to the case for~$(\mathrm{F\omega})$. The only other
  possiblity is that $t \sim_\alpha \omega$ is obtained by rule
  $(\mathrm{F'})$. Then $t \equiv \lambda f . \Xi t_1 (\lambda x
  . t_2[z/fx])$, $t_1 \sim_{<\alpha} \tau_1$ and $\lambda z . t_2
  \sim_{<\alpha} \tau_2$. It suffices to verify that $\Xi t_1 (\lambda
  x . (\lambda z . t_2) (r x)) \succ_{<\alpha} \top$, because for $t'
  \equiv \Xi t_1 (\lambda x . t_2[z/rx])$ we have $\Xi t_1 (\lambda x
  . (\lambda z . t_2) (r x)) \to_\beta t'$ and $t r \to_\beta t'$, so
  then $t' \succ_{<\alpha} \top$ by part~(\ref{r:comm_2}) of the IH,
  which implies $t r \leadsto_{<\alpha} \top$. But the argument to
  show $\Xi t_1 (\lambda x . (\lambda z . t_2) (r x)) \succ_{<\alpha}
  \top$ is analogous to the case for~$(\mathrm{F})$.

  We show~(\ref{r:comm_5}). Suppose $t \sim_\alpha \tau_1$ and $t
  \sim_\beta \tau_2$. If $t \equiv A_\tau$ for~$\tau \in \B$ or $t
  \equiv H$ then the claim is obvious. So suppose $t \not\equiv
  A_\tau$ for $\tau \in \B$ and $t \not\equiv H$. First assume that
  both $t \sim_\alpha \tau_1$ and $t \sim_\beta \tau_2$ are obtained
  by rule $(\mathrm{F'})$. Hence $\tau_1 = \tau_1^1\to\tau_1^2$,
  $\tau_2 = \tau_2^1\to\tau_2^2$, and $t \equiv \lambda f . \Xi t_1
  (\lambda x . t_2[z/fx])$ where $t_1 \sim_{<\alpha} \tau_1^1$,
  $\lambda z . t_2 \sim_{<\alpha} \tau_1^2$, $t_1 \sim_{<\beta}
  \tau_2^1$ and $\lambda z . t_2 \sim_{<\beta} \tau_2^2$. By the IH we
  obtain $\tau_1^1 = \tau_2^1$ and $\tau_1^2 = \tau_2^2$. Hence
  $\tau_1 = \tau_2$. If one of $t \sim_\alpha \tau_1$ or $t \sim_\beta
  \tau_2$ is obtained by~$(\mathrm{F\omega})$ and the other
  by~$(\mathrm{F})$, $(\mathrm{F'})$ or~$(\mathrm{F''})$, or one
  by~$(\mathrm{F})$ and the other by~$(\mathrm{F'})$, or both are
  obtained by~$(\mathrm{F})$, etc., then the argument is similar. If
  one is obtained by $(\mathrm{K\omega})$ and the other by
  $(\mathrm{K\varepsilon})$, then the claim follows from
  parts~(\ref{r:comm_2}) and~(\ref{r:comm_3}) of the IH. The only
  other possibility is, without loss of generality, when $t
  \sim_\alpha \tau_1$ is obtained by $(\mathrm{K\omega})$ or
  $(\mathrm{K\varepsilon})$ and $t \sim_\beta \tau_2$ by
  $(\mathrm{F})$, $(\mathrm{F'})$, $(\mathrm{F''})$ or
  $(\mathrm{F\omega})$. Then $t \equiv K t'$. So by
  Lemma~\ref{r:lem_k_sim} we have $\tau_1, \tau_2 \in \{\omega,
  \varepsilon\}$. For instance, suppose $\tau_1 = \omega$ and $\tau_2
  = \varepsilon$. By~(\ref{r:comm_6}) and its dual, which we have
  already verified in this inductive step, for all $t_3$ we have $t
  t_3 \leadsto_{<\alpha} \top$ and $t t_3 \leadsto_{<\beta} \bot$. By
  parts~(\ref{r:comm_1}) and~(\ref{r:comm_2}) of the IH this implies
  the existence of $t_4$ such that $t_4 \succ_{<\alpha} \top$ and $t_4
  \succ_{<\beta} \bot$, which contradicts part~(\ref{r:comm_3}) of
  the~IH.

  It remains to verify~(\ref{r:comm_3}). If $\tau \in \B$ then this is
  obvious. Suppose $\tau=\tau_1\to\tau_2 \in \Tc_1$. Note that for all
  $t_1 \in \T_{\tau_1}$ we have $\F(\rho_1)(t_1) \equiv
  \F(\rho_2)(t_1)$. This follows from the definition of $\succ_\alpha$
  for $\tau=\tau_1\to\tau_2 \in \Tc_1$, from parts~(\ref{r:comm_1}),
  (\ref{r:comm_2}) and~(\ref{r:comm_3}) of the IH, and from the fact
  that canonical terms are in normal form. Now, if $\tau_1 = \omega$
  then $\rho_1 \equiv \lambda x . \rho_1'$ and $\rho_2 \equiv \lambda
  x . \rho_2'$. Thus for any $t_1$ we have $\rho_1' \equiv
  \F(\rho_1)(t_1) \equiv \F(\rho_2)(t_1) \equiv \rho_2'$, so $\rho_1
  \equiv \rho_2$. If $\tau_1 \ne \omega$ then the claim is immediate,
  because $\T_{\tau_1\to\tau_2} = \Sigma_{\tau_1\to\tau_2}$ for
  $\tau_1 \ne \omega$ was defined to contain exactly one constant for
  every function from $\T_{\tau_1}$ to~$\T_{\tau_2}$.

  The last remaining case is $\tau = o$. Thus, suppose $t
  \succ_{\alpha} \top$ and $t \succ_{\beta} \bot$. It is easily seen
  that this is possible only when the conditions~$(\Xi^\top)$
  and~$(\Xi^\bot)$ are satisfied. So we have $t \equiv \Xi t_1 t_2$
  and there exists~$\tau_1$ such that $t_1 \sim_\alpha \tau_1$ and for
  all $t' \in \T_{\tau_1}$ we have $t_2 t' \leadsto_{<\alpha}
  \top$. There also exists~$\tau_2$ and $t_3 \in \T_{\tau_2}$ such
  that $t_1 \sim_\beta \tau_2$ and $t_2 t_3 \leadsto_{<\beta}
  \bot$. But by~(\ref{r:comm_5}) we have $\tau_1 = \tau_2$. Hence $t_2
  t_3 \leadsto_{<\alpha} \top$ and $t_2 t_3 \leadsto_{<\beta} \bot$,
  which contradicts the inductive hypothesis.
\end{proof}

\begin{sscorollary}\label{r:corollary_t_conv}
  If $t =_{\le\alpha} t'$ then $t \leadsto_\alpha \top$ is equivalent
  to $t' \leadsto_\alpha \top$.
\end{sscorollary}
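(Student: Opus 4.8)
The plan is to derive the corollary from two facts already available: the confluence of $R_\alpha$ and the forward stability of $\leadsto_\alpha$ under reduction. Confluence is exactly part~(\ref{r:comm_1}) of Lemma~\ref{r:lem_commute} read in its self-commutation instance $\beta = \alpha$: if $t \reduces_{\le\alpha} t_1$ and $t \reduces_{\le\alpha} t_2$, then $t_1$ and $t_2$ share a common $\reduces_{\le\alpha}$-reduct. Since $=_{\le\alpha}$ is the transitive reflexive symmetric closure of $\contr_{\le\alpha}$, the usual Church--Rosser argument then shows that $t =_{\le\alpha} t'$ implies the existence of a term $s$ with $t \reduces_{\le\alpha} s$ and $t' \reduces_{\le\alpha} s$. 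Forward stability is the remark established at the very beginning of the proof of Lemma~\ref{r:lem_commute}: conditions~(\ref{r:comm_1}) and~(\ref{r:comm_2}) together give that $r_1 \leadsto_\alpha \rho$ and $r_1 \reduces_{\le\beta} r_2$ imply $r_2 \leadsto_\alpha \rho$; I will use this with $\beta = \alpha$ and $\rho = \top$.

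With these in hand the argument is short and symmetric. Assume $t \leadsto_\alpha \top$; I want $t' \leadsto_\alpha \top$. Using the common reduct $s$ obtained from confluence, forward stability applied to $t \reduces_{\le\alpha} s$ yields $s \leadsto_\alpha \top$, so by the definition of $\leadsto_\alpha$ there is a term $s'$ with $s \reduces_{\le\alpha} s' \succ_\alpha \top$. Now I simply prepend the reduction $t' \reduces_{\le\alpha} s$ to obtain $t' \reduces_{\le\alpha} s \reduces_{\le\alpha} s' \succ_\alpha \top$, which is precisely $t' \leadsto_\alpha \top$. The converse implication is obtained by exchanging the roles of $t$ and $t'$, which is legitimate because $=_{\le\alpha}$ is symmetric and the same common reduct $s$ serves both directions.

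There is no real obstacle here: the statement is a direct corollary of the preceding lemma, and the only points requiring slight care are to invoke Lemma~\ref{r:lem_commute}(\ref{r:comm_1}) in the self-commutation case $\alpha = \beta$ (rather than for two distinct ordinals), so as to get genuine confluence of a single reduction system, and to keep straight the asymmetry between the \emph{forward} stability of $\leadsto_\alpha$ under $\reduces_{\le\alpha}$ (which genuinely needs the commutation lemma) and the trivial \emph{backward} closure obtained merely by concatenating two reductions.
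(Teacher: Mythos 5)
Your proof is correct and takes essentially the same route as the paper, which simply cites conditions~(1) and~(2) of Lemma~\ref{r:lem_commute}: you use confluence (the self-commutation instance of condition~(1)) to obtain a common reduct and the forward stability of $\leadsto_\alpha$ under $\reduces_{\le\alpha}$ (the combination of conditions~(1) and~(2) noted at the start of that lemma's proof) to transfer $\leadsto_\alpha \top$ to the common reduct, with the backward direction being trivial concatenation. This is exactly the intended argument, just written out in full.
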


\begin{proof}
  Follows from conditions~(\ref{r:comm_1}) and~(\ref{r:comm_2}) in
  Lemma~\ref{r:lem_commute}.
\end{proof}

\begin{sscorollary}\label{r:corollary_leadsto_consistent}
  If $t \leadsto_\alpha \rho_1$ and $t \leadsto_\alpha \rho_2$ where
  $\rho_1$, $\rho_2$ are canonical terms with the same canonical type,
  then $\rho_1 \equiv \rho_2$.
\end{sscorollary}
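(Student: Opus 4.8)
The plan is to unfold the definition of $\leadsto_\alpha$ and then reduce everything to the three confluence/consistency clauses already proved in Lemma~\ref{r:lem_commute}. By the notation $t \leadsto_\alpha \rho$ means $t \reduces_{\le\alpha} t' \succ_\alpha \rho$, so the two hypotheses give terms $t_1'$ and $t_2'$ with $t \reduces_{\le\alpha} t_1' \succ_\alpha \rho_1$ and $t \reduces_{\le\alpha} t_2' \succ_\alpha \rho_2$. The whole point is that both reductions use the \emph{same} reduction system $R_\alpha$, so I can exploit confluence of $R_\alpha$ with itself.

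First I would instantiate Lemma~\ref{r:lem_commute}(\ref{r:comm_1}) with $\beta = \alpha$: since $t \reduces_{\le\alpha} t_1'$ and $t \reduces_{\le\alpha} t_2'$, there is a common reduct $t''$ with $t_1' \reduces_{\le\alpha} t''$ and $t_2' \reduces_{\le\alpha} t''$. In other words, commutation of $R_\alpha$ with itself is exactly confluence of $R_\alpha$, which is what lets the two divergent reductions from $t$ be joined.

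Next I would push the two $\succ_\alpha$ facts down along these reductions. Applying Lemma~\ref{r:lem_commute}(\ref{r:comm_2}) (again with $\beta = \alpha$) to $t_1' \succ_\alpha \rho_1$ and $t_1' \reduces_{\le\alpha} t''$ yields $t'' \succ_\alpha \rho_1$; symmetrically from $t_2' \succ_\alpha \rho_2$ and $t_2' \reduces_{\le\alpha} t''$ I get $t'' \succ_\alpha \rho_2$. Now the single term $t''$ satisfies $t'' \succ_\alpha \rho_1$ and $t'' \succ_\alpha \rho_2$, where by assumption $\rho_1$ and $\rho_2$ lie in the same $\T_\tau$. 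Finally, Lemma~\ref{r:lem_commute}(\ref{r:comm_3}) with $\alpha = \beta$ forces $\rho_1 \equiv \rho_2$, completing the argument.

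I do not expect any genuine obstacle here: the corollary is essentially the ``functionality'' of $\leadsto_\alpha$ on canonical terms of a fixed type, and all the real work has already been done in Lemma~\ref{r:lem_commute}. The only point requiring a moment's care is recognizing that the three clauses of that lemma, stated for arbitrary $\alpha,\beta$, must here be used in the diagonal case $\alpha = \beta$ so that confluence of a single reduction system and the single-type injectivity of $\succ_\alpha$ both apply.
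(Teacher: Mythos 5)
Your argument is exactly the paper's: the paper proves this corollary by citing conditions (1)--(3) of Lemma~\ref{r:lem_commute}, and your write-up simply spells out the intended instantiation (join the two reducts by confluence, transport both $\succ_\alpha$ facts to the common reduct, then apply the uniqueness clause). Correct and the same approach.
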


\begin{proof}
  Follows from conditions~(\ref{r:comm_1})-(\ref{r:comm_3}) in
  Lemma~\ref{r:lem_commute}.
\end{proof}

\begin{sslemma}\label{r:lem_extensional}
  Let $t_1$ and $t_2$ be terms. If for all terms $t_0$ we have $t_1
  t_0 =_{\le\alpha} t_2 t_0$ then $t_1 =_{\le\alpha} t_2$. In
  particular, the combinatory algebra of $\M$, as defined in
  Definition~\ref{r:def_model}, is extensional.
\end{sslemma}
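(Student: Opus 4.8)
The plan is to reduce the statement to $\eta$-expansion together with the observation that $=_{\le\alpha}$ is a congruence with respect to $\lambda$-abstraction. First I would check that if $s \contr_{\le\alpha} s'$ then $\lambda x . s \contr_{\le\alpha} \lambda x . s'$ for any variable $x$. Indeed, by Definition~\ref{r:def_reductions} the step $s \contr_{\le\alpha} s'$ arises from some context $C$ and a rule $r_1 \to r_2$ of $R_\alpha$ with $s \equiv C[r_1]$ and $s' \equiv C[r_2]$; taking the context $\lambda x . C$ then gives $\lambda x . s \equiv (\lambda x . C)[r_1] \contr_{\le\alpha} (\lambda x . C)[r_2] \equiv \lambda x . s'$. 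This is the one delicate point, and it is exactly where the peculiarity of $\contr_R$ matters: the argument is legitimate even when $x$ occurs free in $r_1, r_2$ and is thereby captured, because Definition~\ref{r:def_reductions} explicitly permits the free variables of the redex to become bound in the context (this is the clause ``in contrast to all subsequent uses of contexts''). Passing to the transitive, reflexive and symmetric closure, it follows that $s =_{\le\alpha} s'$ implies $\lambda x . s =_{\le\alpha} \lambda x . s'$.

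Next I would assume $t_1 t_0 =_{\le\alpha} t_2 t_0$ for all terms $t_0$, choose a variable $x$ with $x \notin FV(t_1) \cup FV(t_2)$, and instantiate the hypothesis at $t_0 \equiv x$, obtaining $t_1 x =_{\le\alpha} t_2 x$. By the congruence just established, $\lambda x . t_1 x =_{\le\alpha} \lambda x . t_2 x$. Since $R_0 \subseteq R_\alpha$ by Lemma~\ref{r:lem_monotonous} and $R_0$ contains the rules of $\eta$-reduction, and since $x \notin FV(t_1) \cup FV(t_2)$, a single $\eta$-step gives $t_1 =_{\le\alpha} \lambda x . t_1 x$ and $t_2 =_{\le\alpha} \lambda x . t_2 x$. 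Chaining these three equalities yields $t_1 =_{\le\alpha} t_2$, which is the first claim.

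For the extensionality of the combinatory algebra $\C$ of $\M$ from Definition~\ref{r:def_model}, recall that $\C$ consists of the $=_R$-equivalence classes with $[t_1]_R \cdot [t_0]_R = [t_1 t_0]_R$, and that $=_R$ is $=_{\le\zeta}$. Suppose $M_1 = [t_1]_R$ and $M_2 = [t_2]_R$ satisfy $M_1 \cdot Z = M_2 \cdot Z$ for all $Z \in \C$. As every element of $\C$ is of the form $[t_0]_R$, this says precisely that $t_1 t_0 =_R t_2 t_0$ for all terms $t_0$. Applying the first part of the lemma with $\alpha = \zeta$ gives $t_1 =_R t_2$, i.e. $M_1 = M_2$, so $\C$ is extensional.

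I do not expect any serious obstacle here: the whole argument is essentially the standard derivation of extensionality from $\eta$, and the only thing that needs to be verified with care is the closure of $=_{\le\alpha}$ under $\lambda$-abstraction. That closure would fail for a naive context-based reduction that forbids variable capture, but it holds in the present setting precisely because of the capture-permitting clause in the definition of $\contr_R$; once that is in hand, the remaining steps are routine.
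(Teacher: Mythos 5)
Your proof is correct and follows essentially the same route as the paper: instantiate the hypothesis at a fresh variable $x$, pass to $\lambda x . t_1 x =_{\le\alpha} \lambda x . t_2 x$, and collapse both sides by $\eta$. The paper states this in three lines without comment; your explicit verification that $=_{\le\alpha}$ is a congruence for $\lambda$-abstraction, resting on the capture-permitting clause in Definition~\ref{r:def_reductions}, is exactly the point the paper leaves implicit, and you identify it correctly.
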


\begin{proof}
  If $t_1 t_0 =_{\le\alpha} t_2 t_0$ for all terms $t_0$ then in
  particular $t_1 x =_{\le\alpha} t_2 x$ where $x$ is variable which
  does not occur in $t_1$ and $t_2$. Hence $t_1
  \leftidx{_\eta}{\from}{} \lambda x . t_1 x =_{\le\alpha} \lambda x
  . t_2 x \to_\eta t_2$. Therefore $t_1 =_{\le\alpha} t_2$.
\end{proof}

The \emph{rank} of a type $\tau$, denoted $\rank(\tau)$, is defined as
follows. If $\tau \in \B \cup \{o, \omega, \varepsilon\}$ then
$\rank(\tau) = 1$. Otherwise $\tau = \tau_1\to\tau_2 \in \Tc_1$ and we
set $\rank(\tau) = \max\{\rank(\tau_1)+1,\rank(\tau_2)\}$. By the rank
of a canonical term we mean the rank of its canonical type.

We write $t \gg_\alpha t'$ if there exists an $n$-ary context~$C$,
terms $t_1, \ldots, t_n$, and canonical terms $\rho_1, \ldots,
\rho_n$, such that $t_i \succ_\alpha \rho_i$ for $i=1,\ldots,n$, $t
\equiv C[t_1,\ldots,t_n]$ and $t' \equiv C[\rho_1,\ldots,\rho_n]$. If
the maximal rank of $\rho_1,\ldots,\rho_n$ is at most $k$ then we
write $t \gg_\alpha^k t'$, and if it is less than $k$ we write $t
\gg_\alpha^{<k} t'$.

Recall that whenever we write $C[t_1,\ldots,t_n]$ we assume that the
free variables of $t_1,\ldots,t_n$ do not become bound in
$C[t_1,\ldots,t_n]$.

\begin{sslemma}\label{r:lem_succ_abstraction}
  If $t \succ_\alpha \rho$ and $x_1,\ldots,x_n \notin FV(t)$ then
  $\lambda x_1 \ldots x_k . t \succ_{\alpha + k} \lambda x_1 \ldots
  x_k . \rho$.
\end{sslemma}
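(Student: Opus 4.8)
The plan is to prove the statement by a straightforward induction on $k$, with essentially all the content concentrated in the case of a single abstraction. First I would record the key structural observation: if $\tau$ denotes the canonical type of $\rho$, then by $k$-fold application of the clause ``$\T_{\omega\rightarrow\tau_2}$ consists of all $\lambda x . \rho'$ with $\rho' \in \T_{\tau_2}$'' in Definition~\ref{r:def_canonical}, the term $\lambda x_1 \ldots x_k . \rho$ is itself a canonical term, of canonical type $\omega^k \rightarrow \tau$. Moreover, peeling off a single outermost abstraction, $\F(\lambda x_1 . (\lambda x_2 \ldots x_k . \rho))$ is by definition the constant function on $\T_\omega$ whose value is always $\lambda x_2 \ldots x_k . \rho$. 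These two facts are exactly what allow the second clause of Definition~\ref{r:def_succ} to be applied with first argument type $\omega$ (which is explicitly permitted there).

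For the single-abstraction case, suppose $t \succ_\alpha \rho$ and $x \notin FV(t)$; I claim $\lambda x . t \succ_{\alpha+1} \lambda x . \rho$. Since $\lambda x . \rho$ has canonical type $\omega \rightarrow \tau$, by the second clause of Definition~\ref{r:def_succ} it suffices to show that for every $t_1 \in \T_\omega$ we have $(\lambda x . t) t_1 \leadsto_{<\alpha+1} \F(\lambda x . \rho)(t_1)$. The right-hand side equals $\rho$, since $\F(\lambda x . \rho)$ is the constant function with value $\rho$. For the left-hand side, $x \notin FV(t)$ gives $(\lambda x . t) t_1 \contr_\beta t$, and this $\beta$-step already belongs to $R_0 \subseteq R_\alpha$ (by Lemma~\ref{r:lem_monotonous}), so $(\lambda x . t) t_1 \reduces_{\le\alpha} t$. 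Combined with the hypothesis $t \succ_\alpha \rho$ this yields $(\lambda x . t) t_1 \leadsto_\alpha \rho$, and since $\alpha < \alpha + 1$ we get $(\lambda x . t) t_1 \leadsto_{<\alpha+1} \rho$, as required.

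The general statement then follows by induction on $k$: the case $k=0$ is trivial, and for the step I would apply the induction hypothesis to obtain $\lambda x_2 \ldots x_k . t \succ_{\alpha + (k-1)} \lambda x_2 \ldots x_k . \rho$ (the variables $x_2,\ldots,x_k$ still lie outside $FV(t)$), and then apply the single-abstraction case above to the canonical term $\lambda x_2 \ldots x_k . \rho$ and the variable $x_1$, raising the index by one to $\alpha + (k-1) + 1 = \alpha + k$. I do not anticipate any genuine obstacle here: the argument is essentially bookkeeping. The only points demanding care are verifying that $\lambda x_1 \ldots x_k . \rho$ really is a canonical term (so that $\succ$ is even applicable to it on the right), correctly identifying $\F$ of such a ``constant'' canonical term, and tracking the ordinal index so that each peeled abstraction increments it by exactly one, matching the $\alpha + k$ in the statement.
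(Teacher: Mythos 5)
Your proof is correct and takes exactly the route the paper intends: the paper's own proof is just the phrase ``Easy induction on $k$,'' and your argument is a careful fleshing-out of that induction, with the single-abstraction step correctly reduced to the second clause of Definition~\ref{r:def_succ} via the $\beta$-step $(\lambda x . t)\,t_1 \contr_\beta t$ and the observation that $\F(\lambda x . \rho)$ is the constant function with value $\rho$.
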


\begin{proof}
  Easy induction on $k$.
\end{proof}

\begin{sslemma}\label{r:lem_f_gg_0}
  If $t \gg^n F r_1' r_2'$ then $t \equiv F r_1 r_2$ with $r_1 \gg^n
  r_1'$ and $r_2 \gg^n r_2'$.
\end{sslemma}

\begin{proof}
  This follows from $F r_1' r_2' \equiv \lambda f . \Xi r_1' (\lambda
  x . r_2' (f x))$ and from the fact that canonical terms are closed
  and do not contain~$\Xi$.
\end{proof}

\begin{sslemma}\label{r:lem_f_gg}
  If $t \gg^n \lambda f . \Xi r_1' (\lambda x . r_2'[z/fx])$ with $x,f
  \notin FV(r_2')$ then one of the following holds:
  \begin{itemize}
  \item $t \equiv \lambda f . \Xi r_1 (\lambda x . r_2[z/fx])$, $x,f
    \notin FV(r_2)$, $r_1 \gg^n r_1'$ and $r_2 \gg^n r_2'$, or
  \item $t \equiv \lambda f . \Xi r_1 r_2$, $z \notin FV(r_2')$, $r_1
    \gg^n r_1'$ and $r_2 \gg^n \lambda z . r_2'$.
  \end{itemize}
\end{sslemma}

\begin{proof}
  Let $q \equiv \lambda x . r_2'[z/fx]$. Since $t \gg^n \lambda f
  . \Xi r_1' q$ there exist contexts $C_1, C_2$, terms $t_1, \ldots,
  t_k$, and canonical terms $\rho_1, \ldots, \rho_k$, such that $t_i
  \succ_\alpha \rho_i$ for $i=1,\ldots,k$, $t \equiv \lambda f . \Xi
  C_1[t_1,\ldots,t_k] C_2[t_1,\ldots,t_k]$, $r_1' \equiv
  C_1[\rho_1,\ldots,\rho_k]$ and $q \equiv
  C_2[\rho_1,\ldots,\rho_k]$. We take $r_1 \equiv
  C_1[t_1,\ldots,t_k]$. If $C_2 \equiv \lambda x . (C_2')[z / (f x)]$,
  then $C_2'[\rho_1,\ldots,\rho_k] \equiv r_2'$ and we take $r_2
  \equiv C_2'[t_1,\ldots,t_k]$. Otherwise $z \notin FV(r_2')$ and
  $\lambda x . r_2' \equiv \rho_i$ for some $1 \le i \le k$. Then
  $C_2[t_1,\ldots,t_k] \equiv t_i \succ \rho_i$ and the second point
  in the statement of the lemma holds.
\end{proof}

Recall that we use the notations $R$, $\succ$, $\leadsto$, $\gg$,
etc. without subscripts to denote $R_\zeta$, $\succ_\zeta$,
$\leadsto_\zeta$, $\gg_\zeta$, etc., where~$\zeta$ is the ordinal
introduced just before Definition~\ref{r:def_model}. For this ordinal
we have $\succ_\zeta\, =\, \succ_{<\zeta}$, $R_{\zeta} = R_{<\zeta}$,
etc.

\begin{sslemma}\label{r:lem_context}
  If $t_1$, $t_2$, $t_3$ are terms, $\rho$ is a canonical term, and
  $\tau$ is a type, then for every ordinal~$\alpha$ and every natural
  number~$n$ the following conditions hold:
  \begin{enumerate}
  \item if $t_1 \gg^n t_2 \succ_\alpha \rho$ then $t_1 \succ
    \rho$, \label{r:con_1}
  \item if $t_1 \gg^n t_2 \sim_\alpha \tau$ then $t_1 \sim
    \tau$, \label{r:con_2}
  \item if $t_1 \gg^n t_2 \reduces_{\le\alpha} t_2'$ then $t_1
    \reduces_{R} t_1' \gg^n t_2'$. \label{r:con_3}
  \end{enumerate}
\end{sslemma}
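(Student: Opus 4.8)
The plan is to prove the three conditions simultaneously by transfinite induction, keeping in mind that the conclusions are always phrased at the fixpoint stage~$\zeta$ (so $\succ$, $\sim$, $\reduces_R$ and $\leadsto$ all mean $\succ_\zeta$, etc.). The natural measure is a triple $\langle \alpha, k, \iota\rangle$ ordered lexicographically, where $\alpha$ is the ordinal from the hypothesis, $k$ is a structural measure (the rank~$n$ together with the size of the canonical term~$\rho$ or type~$\tau$ occurring in the hypothesis), and $\iota$ orders the three conditions as $(3)\prec(2)\prec(1)$. The innermost component is needed because the proof of~(1) for the postulates $(\Xi_i^\top)$, $(\Xi_H^\top)$, $(F_L^\top)$ in Definition~\ref{r:def_succ} appeals to~(2) at the \emph{same}~$\alpha$ (through a premise of the form $t \sim_\alpha \tau$), while appealing to~(1) and~(3) only at ordinals strictly below~$\alpha$ (through premises phrased with $\leadsto_{<\alpha}$); the component~$k$ is needed for the purely structural recursion in the canonical cases, where $\alpha$ does not drop.

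The backbone of every case is a \emph{parallel decomposition}: from $t_1 \gg^n t_2$ one reads off the shape of~$t_1$ from the shape of~$t_2$. Since $\Xi$, $L$ and the $A_\tau$ are not canonical, no box of the context can straddle them, so an analogue of Lemma~\ref{r:lem_f_gg} gives that $t_2 \equiv \Xi u_1 u_2$ forces $t_1 \equiv \Xi v_1 v_2$ with $v_i \gg^n u_i$, and similarly for $L t_2$, $A_\tau t_2$, and (via Notation~\ref{r:convention_kt}) for $F$ and $H$. With these in hand, condition~(2) is proved by following the defining rule for $u \sim_\alpha \tau$, transporting each premise through the induction hypothesis, and re-applying the same rule to~$t_1$. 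Condition~(3) is a simulation argument: a canonicalizing step $c\,u \contr \F(c)(\rho)$ arising from $u \succ_{<\alpha}\rho$ is matched in~$t_1$ by first using~(1) at a lower stage to obtain $u' \succ \rho$ for the corresponding $u' \gg^n u$, and then contracting $c\,u' \contr_R \F(c)(\rho)$; genuine $\beta\eta$-steps and steps lying entirely inside the context are copied verbatim, after which the $\gg^n$ relation is re-established.

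For condition~(1) I would split on why $t_2 \succ_\alpha \rho$. If $\rho$ has a function type, the definition yields $t_2\,r \leadsto_{<\alpha}\F(\rho)(r)$ for each $r\in\T_{\tau_1}$; appending~$r$ gives $t_1\,r \gg^n t_2\,r$, and conditions~(3) and~(1) at the lower stage produce $t_1\,r \leadsto \F(\rho)(r)$, whence $t_1 \succ \rho$. The purely canonical case $t_2 \equiv \rho$ is handled by recursion on the structure of~$\rho$ (here $k$ decreases). If $\rho\in\{\top,\bot\}$ I would run through the postulates of Definition~\ref{r:def_succ}: the base forms $L A_\tau$, $L H$, $A_\tau c$ force $t_1\equiv t_2$, because their only canonical subterms are of atomic type, on which $\succ$ is the identity; and the inductive forms $(\Xi_i^\top)$, $(\Xi_H^\top)$, $(F_L^\top)$, $(H_i^\top)$, $(\Xi^\bot)$ follow by parallel decomposition together with the induction hypotheses, exactly as in the function-type computation, using in the $(\Xi^\bot)$ case that its premise $H t \succ_{<\alpha}\top$ is itself transported by~(1).

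The step I expect to be the main obstacle is the subcase of~(1) in which a single box of the context captures an \emph{entire} canonical subterm of function type, so that the outer logical structure of~$t_2$ is absorbed into one $\rho_i$ and is no longer syntactically visible in~$t_1$ (the representative instance being $t_2 \equiv L(K c)$ with the whole $K c$ replaced by some $s \succ K c$). Here one cannot simply re-apply a defining rule to~$t_1$, since the syntactic trigger of the rule has been dissolved; instead one must recover the required property of the replacement~$s$ from the extensional behaviour encoded by $s \succ \rho_i$, routing through condition~(2) (to pass from $\rho_i \sim \tau$ to $s \sim \tau$) and through the commutation and consistency results of Lemma~\ref{r:lem_commute} and Corollary~\ref{r:corollary_leadsto_consistent}. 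Arranging that the measure $\langle\alpha,k,\iota\rangle$ strictly decreases in precisely these absorbing cases—where neither~$\alpha$ nor the rank visibly drops at the top level—is the delicate point of the argument, and is what makes the combined ordinal/structural induction, rather than a plain induction on~$\alpha$, unavoidable.
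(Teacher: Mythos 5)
Your overall architecture---a simultaneous induction over the three conditions, a parallel decomposition of $t_1$ along the syntactic shape of $t_2$ (your analogue of Lemma~\ref{r:lem_f_gg}), and establishing condition~(2) before condition~(1) inside a single inductive step---agrees with the paper. But your induction measure is set up backwards, and this is not cosmetic. The paper inducts on pairs $\pair{n}{\alpha}$ ordered lexicographically with the \emph{rank bound $n$ as the dominant component}; you put the ordinal $\alpha$ first. The case that forces the paper's choice is precisely the one absent from your simulation argument for condition~(3): a redex of $t_2$ whose \emph{head position is a box}, i.e.\ $C_0\equiv\Box_i\,C_1$ with $\rho_i$ of function type $\tau_1\to\tau_2$. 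There the contraction $\rho_i\,C_1[\vec{\rho}]\contr_{\le\alpha}\F(\rho_i)(\rho')$ cannot be matched by a single step in $t_1$, because the head of the corresponding subterm is an arbitrary $r_i$ with $r_i\succ\rho_i$; all one knows is $r_i\,\rho'\leadsto\F(\rho_i)(\rho')$, a many-step reduction in $R=R_\zeta$ followed by $\succ_\zeta$. Closing the diagram then requires invoking conditions~(3) and~(1) of the induction hypothesis for $\gg^{<n}$ \emph{at the closure ordinal $\zeta$}, which may exceed $\alpha$. This is legitimate in the paper because the leading component has strictly dropped ($\rank(\rho')=\rank(\tau_1)<\rank(\tau_1\to\tau_2)=\rank(\rho_i)\le n$); under your ordering, where $\alpha$ is compared first, these appeals are unavailable, and no refinement of your secondary component $k$ can rescue a lexicographic order whose leading coordinate has increased. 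Your closing remark that "neither $\alpha$ nor the rank visibly drops" shows you sensed the difficulty, but the resolution is to make the rank the primary coordinate, not to add further structural components below $\alpha$.

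On your "main obstacle": you are right that decompositions in which a box absorbs an entire canonical subterm of function type (e.g.\ $t_2\equiv L(Kc)$ with the whole $Kc$ replaced by some $s\succ Kc$) are not handled by re-applying the defining postulate, since $L\,s$ need not have the syntactic shape $L(K\,t)$ that the postulates for $\succ\top$ require. Identifying this is perceptive---the paper's own treatment of the case $t_2\equiv Hc$ only considers the decomposition $H\Box$ with the box at the atomic constant $c$, not $L\Box$ with the box at $Kc$---but your proposal does not actually discharge it: routing through condition~(2) and Lemma~\ref{r:lem_commute} gives you extensional information about $s$ (its behaviour under application), whereas what is needed is that $L\,s$ itself matches, possibly after $R$-reduction, one of the purely syntactic triggers in Definition~\ref{r:def_succ}. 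As it stands your argument for condition~(1) is incomplete at exactly this point, in addition to resting on an induction that does not terminate in the head-box case of condition~(3).
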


\begin{proof}
  Induction on pairs $\pair{n}{\alpha}$ ordered lexicographically,
  i.e., $\pair{n_1}{\alpha_1} < \pair{n_2}{\alpha_2}$ iff $n_1 < n_2$,
  or $n_1 = n_2$ and $\alpha_1 < \alpha_2$.

  First we verify condition~(\ref{r:con_2}). Suppose $t_1 \gg^n t_2
  \sim_\alpha \tau$. If $t_2 \sim_\alpha \tau$ is obtained by rule
  $(\mathrm{A})$ or $(\mathrm{H})$ then $t_2 \equiv A_\tau$ for $\tau
  \in \B$ or $t_2 \equiv H$, so $t_1 \equiv t_2$ and the claim is
  obvious.

  If $t_2 \sim_\alpha \tau$ is obtained by rule $(\mathrm{K\omega})$
  or $(\mathrm{K\varepsilon})$ then $t_2 \equiv K t_2'$, $\tau \in
  \{\omega, \varepsilon\}$ and $t_2' \leadsto_{<\alpha} c$ where $c
  \in \{\top, \bot\}$, i.e., $t_2' \reduces_{<\alpha} t_2''
  \succ_{<\alpha} c$ for some $t_2''$. Hence $t_1 \equiv K t_1' \gg^n
  K t_2' \equiv t_2$, and thus $t_1' \gg_\zeta^n t_2'
  \reduces_{<\alpha} t_2'' \succ_{<\alpha} c$. By part~(\ref{r:con_3})
  the~IH there exists~$t_1''$ such that $t_1' \reduces_{R} t_1'' \gg^n
  t_2'' \succ_{<\alpha} c$. By part~(\ref{r:con_1}) of the IH we
  obtain $t_1' \leadsto c$. Hence $t_1 \sim \tau$.

  If $t_2 \sim_\alpha \tau$ is obtained by rule $(\mathrm{F\omega})$
  then $\tau = \omega$ and $t_2 \equiv \lambda f . \Xi r_1'
  r_2'$. Then we must have $t_1 \equiv \lambda f . \Xi r_1 r_2 \gg^n
  \lambda f . \Xi r_1' r_2'$ where $r_1 \gg^n r_1' \sim_{<\alpha}
  \varepsilon$. So by part~(\ref{r:con_2}) of the inductive hypothesis
  $r_1 \sim \varepsilon$. Therefore $t_1 \sim \omega = \tau$ by rule
  $(\mathrm{F\omega})$. If $t_2 \sim_\alpha \tau$ is obtained by
  rule~$(\mathrm{F\omega'})$ then the argument is similar.

  If $t_2 \sim_\alpha \tau$ is obtained by rule
  $(\mathrm{F})$ then $t_2 \equiv F r_1' r_2'$ and by
  Lemma~\ref{r:lem_f_gg_0} we obtain $t_1 \equiv F r_1 r_2 \gg^n F r_1'
  r_2'$ where $r_1 \gg^n r_1'$ and $r_2 \gg^n r_2'$. We have
  $\tau=\tau_1\to\tau_2$, $r_1 \gg^n r_1' \sim_{<\alpha} \tau_1$ and
  $r_2 \gg^n r_2' \sim_{<\alpha} \tau_2$. By part~(\ref{r:con_2}) of
  the IH we obtain $r_1 \sim \tau_1$ and $r_2 \sim \tau_2$. Therefore
  $t_1 \equiv F r_1 r_2 \sim \tau$ by rule $(\mathrm{F})$.

  If $t_2 \sim_\alpha \tau$ is obtained by rule~$(\mathrm{F'})$ then
  $t_2 \equiv \lambda x . \Xi r_1' (\lambda x . r_2'[z/fx])$,
  $\tau=\tau_1\to\tau_2$, $r_1' \sim_{<\alpha} \tau_1$ and $\lambda z
  . r_2' \sim_{<\alpha} \tau_2$. By Lemma~\ref{r:lem_f_gg} there are
  two cases.
  \begin{itemize}
  \item $t_1 \equiv \lambda f . \Xi r_1 (\lambda x . r_2[z/fx])$, $x,f
    \notin FV(r_2)$, $r_1 \gg^n r_1'$ and $r_2 \gg^n r_2'$. Then $r_2
    \gg^n r_1' \sim_{<\alpha} \tau_1$ and $\lambda z . r_2 \gg^n
    \lambda z . r_2' \sim_{<\alpha} \tau_2$. By part~(\ref{r:con_2})
    of the IH we obtain $r_1 \sim \tau_1$ and $\lambda z . r_2 \sim
    \tau_2$. Therefore $t_1 \equiv F r_1 r_2 \sim \tau$ by rule
    $(\mathrm{F'})$.
  \item $t_1 \equiv \lambda f . \Xi r_1 r_2$, $z \notin FV(r_2')$,
    $r_1 \gg^n r_1'$ and $r_2 \gg^n \lambda z . r_2'$. By
    part~(\ref{r:con_2}) of the IH we obtain $r_1 \sim \tau_1$ and
    $r_2 \sim \tau_2$. Since $K r_2' \sim_{<\alpha} \tau_2$, by
    Lemma~\ref{r:lem_k_sim} we have $\tau_2 \in
    \{\omega,\varepsilon\}$. Therefore $t_1 \equiv F r_1 r_2 \sim
    \tau$ by rule $(\mathrm{F''})$.
  \end{itemize}

  The remaining case is when $t_2 \sim_\alpha \tau$ is obtained by
  rule $(\mathrm{F''})$. Then $t_2 \equiv \lambda f . \Xi r_1' r_2'$,
  $t_1 \equiv \lambda f . \Xi r_1 r_2$, $r_1 \gg^n r_1' \sim_{<\alpha}
  \tau_1$ and $r_2 \gg^n r_2' \sim_{<\alpha} \tau_2 \in
  \{\omega,\varepsilon\}$. By part~(\ref{r:con_2}) of the IH we obtain
  $r_2 \sim \tau_1$ and $r_1 \sim \tau_2$. Hence $t_1 \sim
  \tau_1\to\tau_2$.

  Now we verify condition~(\ref{r:con_1}). If $t_2 \equiv \rho$ then
  $t_1 \gg \rho$. By~(\ref{r:canon_1}) in Fact~\ref{r:fact_canonical}
  we have $\rho \equiv \lambda x_1 \ldots x_n . c$, so by definition
  of~$\gg$, there exist a unary context $C$, a term $t'$, and a
  canonical term $\rho'$ such that $t_1 \equiv C[t']$, $\rho \equiv
  C[\rho']$ and $t' \succ \rho'$. If $C \equiv \rho$ then the claim is
  obvious. Otherwise $C \equiv \lambda x_1 \ldots x_k . \Box$ where $k
  \le n$, $\rho' \in \T_\tau$, and $\rho \in \T_{\omega^k\to\tau}$,
  by~(\ref{r:canon_2}) in Fact~\ref{r:fact_canonical}. By
  Lemma~\ref{r:lem_succ_abstraction} we obtain $t_1 \equiv C[t']
  \equiv \lambda x_1 \ldots x_k . t' \succ \lambda x_1 \ldots x_k
  . \rho' \equiv C[\rho'] \equiv \rho$.
  %% NOTE: assuming $t_1 \gg_\alpha \rho$, we can only conclude $t
  %% \succ_{\alpha+k} \rho$ by the above argument

  Next assume that $\rho \in \T_{\tau}$ where $\tau=\tau_1\to\tau_2
  \in \Tc_1$. Thus for all $t_3 \in \T_{\tau_1}$ there exists $t_2'$
  such that $t_2 t_3 \reduces_{<\alpha} t_2' \succ_{<\alpha}
  \F(\rho)(t_3)$. Then obviously $t_1 t_3 \gg^n t_2 t_3
  \reduces_{<\alpha} t_2'$, so by part~(\ref{r:con_3}) of the
  inductive hypothesis there exists $t_1'$ such that $t_1 t_3
  \reduces_{R} t_1' \gg^n t_2' \succ_{<\alpha} \F(\rho)(t_3)$. Using
  part~(\ref{r:con_1}) of the IH we obtain $t_1 t_3 \reduces_{R} t_1'
  \succ \F(\rho)(t_3)$. This implies $t_1 \succ \rho$.

  The remaining case to check is $\rho \in \T_o$. Suppose $\rho \equiv
  \top$, so $t_1 \gg^n t_2 \succ_{\alpha} \top$. If $\rho \equiv
  \bot$, i.e., $t_1 \gg^n t_2 \succ_{\alpha} \bot$, then proof is
  similar. We consider all possible forms of~$t_2$ according to the
  definition of $t_2 \succ_\alpha \top$. If $t_2 \equiv A_\tau c$ for
  $\tau \in \B$ then $t_1 \equiv t_2$, because if $c$ is a canonical
  constant of a base type~$\tau$ then the condition $t \succ c$
  implies $t \equiv c$. If $t_2 \equiv \top$ then $t_1 \succ t_2
  \equiv \top$ and the claim is obvious. Suppose
  condition~$(\Xi^\top)$ in the definition of~$t_2 \succ_\alpha \top$
  is satisfied. Then $t_1 \equiv \Xi r_1 r_2 \gg^n \Xi r_1' r_2'
  \equiv t_2$ where $r_1 \gg^n r_1'$ and $r_2 \gg^n r_2'$. By
  definition of $\succ_\alpha$ there exists $\tau$ such that $r_1'
  \sim_\alpha \tau$ and for all $t_3 \in \T_\tau$ we have $r_2' t_3
  \leadsto_{<\alpha} \top$, i.e., $r_2' t_3 \reduces_{<\alpha} t_3'
  \succ_{<\alpha} \top$. Since $r_1 \gg^n r_1' \sim_\alpha \tau$ we
  conclude that $r_1 \sim \tau$ by condition~(\ref{r:con_2}) which we
  have already verified in this inductive step. Because for all $t_3
  \in \T_\tau$ we have $r_2 t_3 \gg^n r_2' t_3 \reduces_{<\alpha} t_3'
  \succ_{<\alpha} \top$, so by part~(\ref{r:con_3}) of the IH for all
  $t_3 \in \T_\tau$ there exists~$t_3''$ such that $r_2 t_3
  \reduces_{R} t_3'' \gg^n t_3' \succ_{<\alpha} \top$. Hence $r_2 t_3
  \leadsto \top$ by applying part~(\ref{r:con_1}) of the IH. Therefore
  $t_1 \succ \top$ by the definition of~$\succ$. Finally, assume the
  condition~$(L^\top)$ in the definition of $t_2 \succ_\alpha \top$ is
  satisfied. Then $t_2 \equiv L t_2'$ with $t_2' \sim_\alpha \tau$ for
  some type~$\tau$. Since $t_1 \gg^n t_2$ we must have $t_1 \equiv L
  t_1'$ with $t_1' \gg^n t_2' \sim_\alpha \tau$. By
  condition~(\ref{r:con_2}), which we have already verified in this
  inductive step, we obtain $t_1' \sim \tau$. Therefore $t_1 \equiv L
  t_1' \succ \top$.

  It remains to prove~(\ref{r:con_3}). It suffices to consider a
  single reduction step, i.e., to show that $t_1 \gg^n t_2
  \contr_{\le\alpha} t_2'$ implies $t_1 \reduces_{R} t_1' \gg^n
  t_2'$. We have $t_1 \equiv C[r_1,\ldots,r_k]$ and $t_2 \equiv
  C[\rho_1,\ldots,\rho_k]$ where $r_i \succ \rho_i$ and $\rank(\rho_i)
  \le n$, for $i=1,\ldots,k$. Denote by $C_0[\rho_1,\ldots,\rho_k]$
  the contracted redex in~$t_2$, where the boxes in~$C_0$ correspond
  to appropriate boxes in~$C$. By~$C_e$ we denote the surrounding
  context satisfying $C \equiv C_e[C_0,\Box_1,\ldots,\Box_k]$. It
  follows from the definition of $R_\alpha$ that there are four
  possibilities: $C_0 \equiv \lambda x . C_1 x$ where $x \notin
  FV(C_1)$, $C_0 \equiv (\lambda x . C_1) C_2$, $C_0 \equiv c_0 C_1$
  for $c_0 \in \Sigma_{\tau_1\to\tau_2}$, or $C_0 \equiv \Box_i C_1$
  for some $1 \le i \le k$. In the first two cases we have $t_2's
  \equiv C_e[C_0'[\rho_1,\ldots,\rho_k], \rho_1,\ldots,\rho_k]$ where
  $C_0 \contr_{\le\alpha} C_0'$, so we may just take $t_1' \equiv
  C_e[C_0'[r_1,\ldots,r_k], r_1,\ldots,r_k]$.

  Otherwise the contraction in $t_2$ produces some canonical term
  $\rho$, i.e., $C_0[\rho_1,\ldots,\rho_k] \contr_{\le\alpha} \rho$. It
  suffices to prove:
  \begin{itemize}
  \item[$(\star)$] there exists $t$ such that $C_0[r_1,\ldots,r_k]
    \reduces_{R} t \succ \rho$, and if $t \not\equiv
    \rho$ then $\rank(\rho) \le n$.
  \end{itemize}
  Indeed, if $(\star)$ holds then simply take $t_1' \equiv
  C_e[t,r_1,\ldots,r_k]$. We have $t_1 \equiv
  C_e[C_0[r_1,\ldots,r_k],r_1,\ldots,r_k] \reduces_{R}
  C_e[t,r_1,\ldots,r_k] \equiv t_1'$ and $t_2' \equiv
  C_e[\rho,r_1,\ldots,r_k]$. Now it is easy to see that $t_1' \gg^n
  t_2'$: if $t \equiv \rho$ then we take
  $C_e[\rho,\Box_1,\ldots,\Box_k]$ as the context required by the
  definition of $\gg^n$, otherwise we take $C_e$ noting that $t \succ
  \rho$ and $\rank(\rho) \le n$.

  If $C_0 \equiv c_0 C_1$ then $C_1[\rho_1,\ldots,\rho_k]
  \succ_{<\alpha} \rho'$ where $\F(c)(\rho') \equiv \rho$. We conclude
  $C_1[r_1,\ldots,r_k] \succ \rho'$ by part~(\ref{r:con_1}) of the IH
  and the fact that $C_1[r_1,\ldots,r_k] \gg^n
  C_1[\rho_1,\ldots,\rho_k]$. Therefore $C_0[r_1,\ldots,r_k] \equiv c
  C_1[r_1,\ldots,r_k] \contr_{R} \rho$ and we are done.

  Suppose $C_0 \equiv \Box_i C_1$ where $1 \le i \le k$. First assume
  that $\rho_i$ is a canonical constant of type $\tau_1\to\tau_2$. As
  in the previous paragraph we have $C_1[\rho_1,\ldots,\rho_k]
  \succ_{<\alpha} \rho'$ where $\F(\rho_i)(\rho') \equiv \rho$, so
  $C_1[r_1,\ldots,r_k] \succ \rho'$ by part~(\ref{r:con_1}) of the
  IH. Obviously $\rank(\rho) = \rank(\tau_2) \le
  \rank(\tau_1\to\tau_2) = \rank(\rho_i) \le n$ and $\rank(\rho') =
  \rank(\tau_1) < \rank(\tau_1) + 1 \le \rank(\tau_1\to\tau_2) =
  \rank(\rho_i) \le n$. Let $r \equiv C_1[r_1,\ldots,r_k]$. We have $r
  \succ \rho'$ and $\rank(\rho') < n$, so $r_i r \gg^{<n} r_i \rho'$
  where the context required by the definition of $\gg^{<n}$ is $r_i
  \Box$. Since $r_i \succ \rho_i$ and the canonical type of $\rho_i$
  is a function type, we conclude by definition of $\succ$ that $r_i
  \rho' \leadsto \F(\rho_i)(\rho') \equiv \rho$. Note that we may have
  $r_i \equiv \rho_i$, but then the condition $r_i \rho' \leadsto
  \rho$ is satisfied anyway, by definition of $\F$. Therefore there
  exists $t'$ such that $r_i r \gg^{<n} r_i \rho' \reduces_{R} t'
  \succ \rho$. By part~(\ref{r:con_3}) of the inductive hypothesis
  there exists $t$ such that $r_i r \reduces_\zeta t \gg_\zeta^{<n} t'
  \succ \rho$. Applying part~(\ref{r:con_1}) of the IH we obtain $t
  \succ \rho$. Hence $C_0[r_1,\ldots,r_k] \equiv r_i
  C_1[r_1,\ldots,r_k] \equiv r_i r \reduces_{R} t \succ \rho$ where
  $\rank(\rho) \le n$, so~$(\star)$ holds.

  Now suppose that $\rho_i \equiv \lambda x_1 \ldots x_m . c$ for $m >
  0$. We have $C_0[r_1,\ldots,r_k] \equiv r_i C_1[r_1,\ldots,r_k]$
  with \mbox{$r_i \succ \rho_i$}. By the definition of $\succ$ we
  conclude that there exists $t$ such that $r_i C_1[r_1,\ldots,r_k]
  \reduces_{R} t \succ \lambda x_2 \ldots x_m . c \equiv
  \rho$. Obviously we also have $\rank(\rho) \le \rank(\rho_i) \le
  n$. Thus~$(\star)$ holds.
\end{proof}

\begin{sscorollary}\label{r:corollary_succ_stable}
  If $t \succ \rho_1$ and $C[\rho_1] \leadsto \rho_2$, then $C[t]
  \leadsto \rho_2$.
\end{sscorollary}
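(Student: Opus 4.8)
The plan is to reduce the statement directly to Lemma~\ref{r:lem_context} through the auxiliary relation~$\gg$. First I would note that since $t \succ \rho_1$, putting $n = \rank(\rho_1)$ and taking the unary context~$C$ itself as the $1$-ary context witnessing $\gg^n$ (with its single box filled by~$t$ on one side and by~$\rho_1$ on the other), the definition of $\gg^n$ gives $C[t] \gg^n C[\rho_1]$ immediately, using $t \succ_\zeta \rho_1$ and $\rank(\rho_1) \le n$.

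Next I would unfold the hypothesis $C[\rho_1] \leadsto \rho_2$ according to the definition of $\leadsto = \leadsto_\zeta$: there is a term~$s$ with $C[\rho_1] \reduces_{\le\zeta} s$ and $s \succ \rho_2$. Applying part~(\ref{r:con_3}) of Lemma~\ref{r:lem_context} to the chain $C[t] \gg^n C[\rho_1] \reduces_{\le\zeta} s$ yields a term~$t'$ such that $C[t] \reduces_R t' \gg^n s$. Since then $t' \gg^n s \succ \rho_2$, part~(\ref{r:con_1}) of Lemma~\ref{r:lem_context} gives $t' \succ \rho_2$. Combining the two, $C[t] \reduces_R t' \succ \rho_2$, which is precisely $C[t] \leadsto \rho_2$, as required.

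I do not anticipate a genuine obstacle here, since all the substantive work has already been carried out in Lemma~\ref{r:lem_context}; the corollary is essentially a convenient repackaging of that lemma for the context-substitution situations arising later. The only points demanding minor care are the rank bookkeeping---checking that a single application of $\gg^n$ with $n = \rank(\rho_1)$ suffices---and confirming that parts~(\ref{r:con_1}) and~(\ref{r:con_3}) are being invoked at the fixed-point ordinal~$\zeta$, where $\succ_\zeta = \succ$ and $\reduces_{\le\zeta}$ coincides with $\reduces_R$, so that the conclusions of the lemma are stated exactly in the unsubscripted form needed for $\leadsto$.
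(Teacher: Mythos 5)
Your proposal is correct and follows exactly the route the paper intends: the corollary is stated without proof as an immediate consequence of Lemma~\ref{r:lem_context}, obtained by observing $C[t] \gg^{\rank(\rho_1)} C[\rho_1]$ and then chaining part~(\ref{r:con_3}) with part~(\ref{r:con_1}) at the fixed-point ordinal~$\zeta$. Your rank bookkeeping and the unfolding of $\leadsto$ are both accurate, so there is nothing to add.
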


The above corollary states that our definition of $\succ$ is
correct. If $t \succ \rho_1$ then~$t$ behaves exactly like~$\rho_1$ in
every context~$C$ such that~$C[\rho_1]$ has an ``interesting''
interpretation.

\medskip

The following final lemmas show that the conditions on~$\Ts$ required
for a classical illative model are satisfied by~$\M$.

\begin{sslemma}\label{r:lem_h_t_f}
  If $H t \leadsto_\alpha \top$ then $t \leadsto_{<\alpha} \top$ or $t
  \leadsto_{<\alpha} \bot$.
\end{sslemma}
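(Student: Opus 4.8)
The plan is to argue by transfinite induction on $\alpha$. Unfolding $H t \leadsto_\alpha \top$ means $H t \reduces_{\le\alpha} s \succ_\alpha \top$ for some $s$, and by the form-preservation consequence of Lemma~\ref{r:lem_xi_unique} noted immediately after it (a reduction out of $H t_1$ stays of the shape $H t_1'$), we get $s \equiv H t'$ with $t \reduces_{\le\alpha} t'$. Hence it suffices to analyse how $H t' \equiv L(K t') \succ_\alpha \top$ can arise according to Definition~\ref{r:def_succ}, and in each case to conclude the claim for $t'$ in place of $t$; since $t \reduces_{\le\alpha} t'$, any reduction of $t'$ lifts to $t$.

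First I would dispose of the clauses that cannot produce a term of shape $L(K t')$. Since $K t' = \lambda x . t'$ is an abstraction with $x \notin FV(t')$, matching $L(K t')$ against the base postulates of Definition~\ref{r:def_succ} leaves only $L(K t') \equiv L(K c)$ with $c \in \{\top,\bot\}$ (the others fail by a head-symbol or a free-variable mismatch), so $t' \equiv c$: if $t' \equiv \top$ then $t \leadsto_\alpha \top$, and if $t' \equiv \bot$ then $t \leadsto_\alpha \bot$, hence $t \leadsto_{\alpha+1}\bot$. This already handles $\alpha = 0$. For $\alpha > 0$, clause $(\Xi_i^\top)$ is impossible by a head mismatch, while $(H_i^\top)$ gives $t' \equiv t_1$ with $t_1 \leadsto_{<\alpha}\top$, whence immediately $t \leadsto_\alpha \top$. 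The degenerate possibility of $(F_L^\top)$ requires $K t' \equiv F t_1 t_2$, which by Notation~\ref{r:convention_kt} forces $t_2 \equiv K q_2$ and $t' \equiv \Xi t_1 (K q_2)$ (otherwise the bound $f$ of $F t_1 t_2$ would remain free, contradicting $x \notin FV(t')$); since then $L t_2 = H q_2$, $(K q_2) t_3 \reduces q_2$, and $H(t_2 t_3) \reduces H q_2$, this case either yields $t' \succ_\alpha \top$ vacuously via $(\Xi_i^\top)$ when $t_1 \sim_\alpha \varepsilon$ (as $\T_\varepsilon = \emptyset$), or folds into the $(\Xi_H^\top)$ analysis below by Corollary~\ref{r:corollary_t_conv}.

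The principal case is $(\Xi_H^\top)$, where $t' \equiv \Xi t_1 t_2$ and there is a type $\tau$ with $t_1 \sim_\alpha \tau$ and $H(t_2 t_3) \leadsto_{<\alpha}\top$ for all $t_3 \in \T_\tau$. For each such $t_3$, applying the inductive hypothesis at the $\beta < \alpha$ witnessing $H(t_2 t_3)\leadsto_\beta\top$ gives $t_2 t_3 \leadsto_{<\alpha}\top$ or $t_2 t_3 \leadsto_{\beta+1}\bot$ with $\beta + 1 \le \alpha$. I would then split on whether the first disjunct holds for every $t_3 \in \T_\tau$. If it does, then $t_1 \sim_\alpha \tau$ together with $t_2 t_3 \leadsto_{<\alpha}\top$ for all $t_3 \in \T_\tau$ is exactly clause $(\Xi_i^\top)$, so $\Xi t_1 t_2 \succ_\alpha\top$ and $t \leadsto_\alpha\top$. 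Otherwise some $t_3 \in \T_\tau$ satisfies $t_2 t_3 \leadsto_{\beta+1}\bot$, hence $t_2 t_3 \leadsto_\alpha\bot$ by monotonicity (Lemma~\ref{r:lem_monotonous}); combined with $H(\Xi t_1 t_2) \succ_\alpha \top$ (the standing hypothesis of this case) and $t_1 \sim_{\alpha+1}\tau$, this is precisely clause $(\Xi^\bot)$ instantiated at $\alpha+1$, using $\succ_{<\alpha+1} = \succ_\alpha$ and $\leadsto_{<\alpha+1} = \leadsto_\alpha$; therefore $\Xi t_1 t_2 \succ_{\alpha+1}\bot$ and $t \leadsto_{\alpha+1}\bot$.

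The ordinal bookkeeping is the delicate point. The $\bot$ conclusion genuinely needs the extra step to $\alpha+1$, because the inductive hypothesis only delivers falsity of $t_2 t_3$ at level $\beta+1$, which may equal $\alpha$, so assembling $(\Xi^\bot)$ lands at $\alpha+1$ rather than $\alpha$; one must check that all three premises of $(\Xi^\bot)$ are simultaneously available there, via monotonicity and the identities $\succ_{<\alpha+1}=\succ_\alpha$, $\leadsto_{<\alpha+1}=\leadsto_\alpha$. The only other technical care is the reductions $(K q_2) t_3 \reduces q_2$ and $H(t_2 t_3) \reduces H q_2$ used to collapse the degenerate $(F_L^\top)$ case, which rest on Corollary~\ref{r:corollary_t_conv} and the form-preservation note following Lemma~\ref{r:lem_xi_unique}.
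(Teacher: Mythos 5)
Your proof is correct and follows essentially the same route as the paper: reduce to analysing $H t' \succ_\alpha \top$, induct on $\alpha$, case-split on the clauses of Definition~\ref{r:def_succ}, and assemble $(\Xi^\bot)$ at level $\alpha+1$ for the falsity branch. The only (harmless) stylistic difference is that you discharge the $(F_L^\top)$ case by showing its hypotheses entail those of $(\Xi_H^\top)$ via $H(t_2 t_3) \reduces H q_2$ and Corollary~\ref{r:corollary_t_conv}, whereas the paper applies the induction hypothesis directly to $L t_2 \equiv H q_2$; both are sound.
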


\begin{proof}
  Keeping in mind the convention regarding the meaning of $H t$, we
  note that if $H t \leadsto_\alpha \top$ then $H t
  \reduces_{\le\alpha} L (K t') \succ_\alpha \top$ where $t
  \reduces_{\le\alpha} t'$. Thus it suffices to show that for any
  term~$t$, if $L (K t) \succ_\alpha \top$ then $t \leadsto_\alpha
  \top$ or $t \leadsto_{\alpha+1} \bot$. Assume $L (K t) \succ_\alpha
  \top$. Then the condition~$(L^\top)$ must hold, so $K t \sim_\alpha
  \tau$ for some type~$\tau$. By Lemma~\ref{r:lem_k_sim} we have $\tau
  = \omega$ or $\tau=\varepsilon$. Assume $\tau=\omega$. The other
  case is analogous. By~(\ref{r:comm_6}) in Lemma~\ref{r:lem_commute}
  we have $K t t \leadsto_{<\alpha} \top$. Since $K t t \to_\beta t$, by
  Corollary~\ref{r:corollary_t_conv} we have $t \leadsto_{<\alpha} \top$.
\end{proof}

\begin{sslemma}\label{r:lem_canonical_succ}
  If $\rho \in \T_\tau$ and $\tau\ne\omega$ then $\rho \succ \rho$.
\end{sslemma}

\begin{proof}
  Induction on the size of~$\tau$.
\end{proof}

\begin{sslemma}\label{r:lem_sim_correct}
  If $t \sim_\alpha \tau$ then for all $t_0 \in \T_\tau$ we have $t
  t_0 \leadsto \top$.
\end{sslemma}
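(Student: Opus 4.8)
The plan is to prove the statement by transfinite induction on $\alpha$, distinguishing cases according to the rule of Definition~\ref{r:def_sim} that derives $t \sim_\alpha \tau$; equivalently, according to the shape of $\tau$. Two cases dispose of themselves immediately. If $\tau = \varepsilon$ then $\T_\varepsilon = \emptyset$, so the universally quantified conclusion holds vacuously. If $\tau = \omega$ then $\T_\omega$ is the set of all terms, and condition~(\ref{r:comm_6}) of Lemma~\ref{r:lem_commute} already gives $t\, t_0 \leadsto_{<\alpha} \top$ for every $t_0$; since $\leadsto_{<\alpha}\,\subseteq\,\leadsto_{\alpha+1}$ by monotonicity (Lemma~\ref{r:lem_monotonous}), we are done. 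This simultaneously settles rules $(\mathrm{K}\omega)$, $(\mathrm{K}\varepsilon)$ and $(\mathrm{F}\omega)$, whose conclusion types are $\omega$ or $\varepsilon$.

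The two remaining atomic cases are direct. If $t \sim_\alpha \tau$ is derived by $(\mathrm{A})$ then $t \equiv A_\tau$ with $\tau \in \B$, and $\T_\tau = \Sigma_\tau$, so every $t_0 \in \T_\tau$ is a constant $c \in \Sigma_\tau$; then $A_\tau c \succ_0 \top$ by postulate~(\ref{r:rule_t_0_3}) of Definition~\ref{r:def_succ}, whence $A_\tau c \leadsto_{\alpha+1} \top$. If the derivation is by $(\mathrm{H})$ then $t \equiv H$, $\tau = o$, and $\T_o = \{\top,\bot\}$; for $c \in \{\top,\bot\}$ we have $H c \succ_0 \top$ by postulate~(\ref{r:rule_t_0_4}), so again $H c \leadsto_{\alpha+1} \top$. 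Note that $\tau \in \B$ and $\tau = o$ can only be obtained by $(\mathrm{A})$ and $(\mathrm{H})$ respectively, since the function-type conventions only ever produce $\omega$ or $\varepsilon$. None of these cases uses the induction hypothesis.

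The main case is when $t \sim_\alpha \tau$ is derived by $(\mathrm{F})$ with a genuine function type $\tau = \tau_1 \to \tau_2 \in \Tc_1$ (so $\tau_1 \ne \varepsilon$ and $\tau_2 \notin \{\omega,\varepsilon\}$); here $t \equiv F t_1 t_2$ with $t_1 \sim_{<\alpha} \tau_1$ and $t_2 \sim_{<\alpha} \tau_2$, and necessarily $\alpha \ge 1$. Fix $t_0 \in \T_{\tau_1\to\tau_2}$. Unfolding the convention of Notation~\ref{r:convention_kt}, $F t_1 t_2\, t_0 \reduces_\beta \Xi t_1 B$, where $B$ satisfies $B\, t_3 =_\beta t_2\,(t_0\, t_3)$ for every $t_3$. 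The plan is to verify $\Xi t_1 B \succ_{\alpha+1} \top$ by condition~$(\Xi_i^\top)$ of Definition~\ref{r:def_succ}, using the witnessing type $\tau_1$. First, $t_1 \sim_{\alpha+1} \tau_1$ follows from $t_1 \sim_{<\alpha}\tau_1$ by monotonicity. Second, for any $t_3 \in \T_{\tau_1}$ one computes, in both the constant-type subcase ($\tau_1 \ne \omega$, where $t_0$ is a constant of $\Sigma_{\tau_1\to\tau_2}$) and the $\omega$-domain subcase ($\tau_1 = \omega$, where $t_0 \equiv \lambda x.\rho$), that $t_0\, t_3 \reduces_{\le 1} \rho'$ for the canonical term $\rho' \equiv \F(t_0)(t_3) \in \T_{\tau_2}$; hence $B\, t_3 =_{\le\alpha} t_2\,\rho'$. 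Applying the induction hypothesis to $t_2 \sim_{\gamma_2}\tau_2$ (for the $\gamma_2 < \alpha$ witnessing $t_2 \sim_{<\alpha}\tau_2$) and to $\rho' \in \T_{\tau_2}$ gives $t_2\,\rho' \leadsto_{\gamma_2+1}\top\,\subseteq\,\leadsto_\alpha\top$, and then Corollary~\ref{r:corollary_t_conv} yields $B\, t_3 \leadsto_\alpha \top$. Since $\leadsto_{<\alpha+1}\,=\,\leadsto_\alpha$, condition~$(\Xi_i^\top)$ applies, so $\Xi t_1 B \succ_{\alpha+1}\top$, and as $F t_1 t_2\, t_0 \reduces_{\le\alpha+1} \Xi t_1 B$ we conclude $F t_1 t_2\, t_0 \leadsto_{\alpha+1}\top$.

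I expect the bookkeeping in this last case to be the main obstacle: correctly unfolding the two-pronged definition of $F t_1 t_2$ (according to whether $t_2$ is a $\lambda$-abstraction), checking that the reduction $t_0\, t_3 \reduces \F(t_0)(t_3)$ really lives in $R_{\le 1}$ in both the $\tau_1 \ne \omega$ and $\tau_1 = \omega$ subcases, and keeping the ordinal indices straight so that the induction hypothesis is invoked only at ordinals $\gamma_2 < \alpha$ while the witness $t_1 \sim_{\alpha+1}\tau_1$ and the final conclusion are stated at $\alpha+1$.
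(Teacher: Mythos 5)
Your proof is correct and follows essentially the same route as the paper: the atomic and degenerate cases are immediate (the paper dispatches the $\omega$/$\varepsilon$ conclusions directly, using $\T_\varepsilon=\emptyset$ inside $(\Xi_i^\top)$, rather than via condition~(\ref{r:comm_6}) of Lemma~\ref{r:lem_commute}, but this is cosmetic), and the main $(\mathrm{F})$ case is handled identically by reducing $F t_1 t_2\, t_0$ to $\Xi\, t_1\, (\lambda y . t_2 (t_0 y))$, observing $t_0 r_1 \reduces_{\le 1} r_2 \in \T_{\tau_2}$, invoking the induction hypothesis on $t_2 \sim_{<\alpha} \tau_2$, and applying $(\Xi_i^\top)$. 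Your more careful ordinal bookkeeping, which lands at $\succ_{\alpha+1}$ rather than the paper's stated $\succ_\alpha$, is exactly what the $\alpha+1$ in the lemma's statement accommodates.
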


\begin{proof}
  Induction on $\alpha$. If $t \sim_\alpha \tau$ is obtained by
  rule~$(\mathrm{A})$, $(\mathrm{H})$, $(\mathrm{K\omega})$ or
  $(\mathrm{K\varepsilon})$, then the claim is obvious. If
  $\tau=\omega$ then the claim follows from~(\ref{r:comm_6}) in
  Lemma~\ref{r:lem_commute}. If $\tau=\varepsilon$ then the claim is
  also obvious. So we may assume $\tau=\tau_1\to\tau_2 \notin
  \{\omega,\varepsilon\}$. Then the only remaining cases are when $t
  \sim \tau$ is obtained by~$(\mathrm{F})$ or~$(\mathrm{F'})$. Then $t
  =_\beta F t_1 t_2$, $\tau = \tau_1\to\tau_2$, $t_1 \sim_{<\alpha}
  \tau_1$ and $t_2 \sim_{<\alpha} \tau_2$. Suppose $t_0 \in
  \T_{\tau_1\to\tau_2}$. Then for all $r_1 \in \T_{\tau_1}$ there
  exists $r_2 \in \T_{\tau_2}$ such that $t_0 r_1 \reduces_R r_2$, by
  Definition~\ref{r:def_succ}, because if $\tau_1\ne\omega$ then $r_1
  \succ r_1$ by Lemma~\ref{r:lem_canonical_succ}. Also, we have $F t_1
  t_2 t_0 =_{\le 0} \Xi t_1 \lambda y . t_2 (t_0 y)$. Hence $(\lambda
  y . t_2 (t_0 y)) r_1 \reduces_R t_2 r_2$. Because $t_2
  \sim_{<\alpha} \tau_2$, we have $t_2 r_2 \leadsto \top$ by the IH,
  so $(\lambda y . t_2 (t_0 y)) r_1 \leadsto \top$. Therefore $\Xi t_1
  \lambda y . t_2 (t_0 y) \succ \top$ by
  condition~$(\Xi_i^\top)$. Hence, by
  Corollary~\ref{r:corollary_t_conv}, we obtain $F t_1 t_2 t' \leadsto
  \top$.
\end{proof}

\begin{sslemma}\label{r:lem_succ_type}
  If $t_1 \sim_\alpha \tau$, $\tau \ne \omega$, $\tau \ne \varepsilon$
  and $t_1 t_2 \leadsto \top$, then $t_2 \leadsto \rho$ for some $\rho
  \in \T_\tau$.
\end{sslemma}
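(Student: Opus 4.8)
The plan is to argue by induction on $\alpha$, distinguishing cases according to the rule of Definition~\ref{r:def_sim} by which $t_1 \sim_\alpha \tau$ is derived. Since $\tau \ne \omega$ and $\tau \ne \varepsilon$, the rules $(\mathrm{K}\omega)$, $(\mathrm{K}\varepsilon)$ and $(\mathrm{F}\omega)$ cannot apply, so only $(\mathrm{A})$, $(\mathrm{H})$ and $(\mathrm{F})$ remain.

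For $(\mathrm{A})$ we have $t_1 \equiv A_\tau$ with $\tau \in \B$. Since there are no reduction rules involving $A_\tau$, Lemma~\ref{r:lem_xi_unique} shows that $A_\tau t_2 \reduces A_\tau t_2'$ with $t_2 \reduces t_2'$, and the only clause that can yield $A_\tau t_2' \succ \top$ is the base clause $A_\tau c \succ \top$ with $c \in \Sigma_\tau = \T_\tau$ (every other clause for $\succ \top$ has a different head symbol, and $\top$ has canonical type $o$). Hence $t_2 \reduces t_2' \equiv c \in \T_\tau$, giving $t_2 \leadsto c$. For $(\mathrm{H})$ we have $t_1 \equiv H$ and $\tau = o$; then $H t_2 \leadsto \top$ and Lemma~\ref{r:lem_h_t_f} yields $t_2 \leadsto \top$ or $t_2 \leadsto \bot$, and $\{\top,\bot\} = \T_o$.

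The main case is $(\mathrm{F})$, where $t_1 \equiv F t_1^1 t_1^2$, $\tau = \tau_1 \to \tau_2$, $t_1^1 \sim_{<\alpha} \tau_1$ and $t_1^2 \sim_{<\alpha} \tau_2$. From $\tau \ne \omega, \varepsilon$ and the conventions on types one reads off $\tau_1 \ne \varepsilon$ and $\tau_2 \ne \omega, \varepsilon$. Unfolding the abbreviation for $F$, we have $t_1 t_2 =_R \Xi t_1^1 r$ with $r = \lambda x . t_1^2 (t_2 x)$ (or the variant when $t_1^2$ is an abstraction), so $\Xi t_1^1 r \leadsto \top$ by Corollary~\ref{r:corollary_t_conv}. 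Since $\Xi$ heads no redex, Lemma~\ref{r:lem_xi_unique} reduces this to $\Xi \tilde{t}_1^1 \tilde{r} \succ \top$, which can only come from clause $(\Xi_i^\top)$: there is $\sigma$ with $\tilde{t}_1^1 \sim \sigma$ and $\tilde{r} t_3 \leadsto \top$ for all $t_3 \in \T_\sigma$. As $t_1^1 \sim \tau_1$ and $t_1^1 \reduces \tilde{t}_1^1$, parts~(\ref{r:comm_4}) and~(\ref{r:comm_5}) of Lemma~\ref{r:lem_commute} force $\sigma = \tau_1$. Undoing the $\beta$-step and applying Corollary~\ref{r:corollary_t_conv} again, I obtain $t_1^2 (t_2 t_3) \leadsto \top$ for every $t_3 \in \T_{\tau_1}$. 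Now $t_1^2 \sim_{<\alpha} \tau_2$ with $\tau_2 \ne \omega, \varepsilon$, so the induction hypothesis applied to $t_1^2$ gives, for each $t_3 \in \T_{\tau_1}$, a canonical $\rho_{t_3} \in \T_{\tau_2}$ with $t_2 t_3 \leadsto \rho_{t_3}$ (unique by Corollary~\ref{r:corollary_leadsto_consistent}).

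It then remains to package these values into a single canonical term of type $\tau_1 \to \tau_2$, and this is where the two shapes of $\tau_1$ differ. If $\tau_1 \ne \omega$, the map $f : \T_{\tau_1} \to \T_{\tau_2}$, $f(t_3) = \rho_{t_3}$, is realised by a constant $\rho_f \in \Sigma_{\tau_1\to\tau_2}$ with $\F(\rho_f) = f$ (using that $\N$ is a full model), and then $t_2 t_3 \leadsto \F(\rho_f)(t_3)$ for all $t_3 \in \T_{\tau_1}$ gives $t_2 \succ \rho_f$, hence $t_2 \leadsto \rho_f \in \T_\tau$. The delicate sub-case is $\tau_1 = \omega$: here the canonical terms of type $\omega \to \tau_2$ are only the constant functions $\lambda x . \rho'$, so I must show that $t_2$ behaves like a constant, and I expect this to be the main obstacle. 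The plan is to resolve it by choosing a fresh variable $y \notin FV(t_2)$, so that $y \in \T_\omega$; the induction hypothesis yields $t_2 y \leadsto \rho_y \in \T_{\tau_2}$, and then parts~(\ref{r:lcsp_1}) and~(\ref{r:lcsp_2}) of Lemma~\ref{r:lem_nu_context} let me replace $y$ by an arbitrary term, giving $t_2 t'' \leadsto \rho_y$ for every $t''$ with the \emph{same} $\rho_y$. Thus $t_2 \succ \lambda x . \rho_y \in \T_{\omega\to\tau_2} = \T_\tau$, which completes the argument.
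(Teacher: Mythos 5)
Your proposal is correct and follows essentially the same route as the paper's proof: the same case split on the rule deriving $t_1 \sim_\alpha \tau$, the same use of $(\Xi_i^\top)$ together with parts~(\ref{r:comm_4}) and~(\ref{r:comm_5}) of Lemma~\ref{r:lem_commute} to pin down the quantification type, fullness of $\N$ for the $\tau_1 \ne \omega$ subcase, and Lemma~\ref{r:lem_nu_context} with a fresh variable for the $\tau_1 = \omega$ subcase. No gaps.
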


\begin{proof}
  Induction on $\alpha$. If $t_1 \sim_\alpha \tau$ is obtained by
  rule~$(\mathrm{A})$ then $t_1 \equiv A_\tau$ for $\tau \in \B$, and
  $A_\tau t_2 \reduces_{R} t' \succ_\top$. So $t' \equiv A_\tau
  t_2'$ where $t_2 \reduces_{R} t_2'$. By
  Definition~\ref{r:def_succ} we have $t_2' \equiv c$ for $c \in
  \T_\tau$. Hence $t_2 \leadsto c$. If $t_1 \sim_\alpha \tau$ is obtained by
  rule~$(\mathrm{H})$ then $t_1 \equiv H$ and $t_2 \leadsto c \in
  \{\top, \bot\}$ by Lemma~\ref{r:lem_h_t_f}.

  The only remaining case is when $t_1 \sim_\alpha \tau =
  \tau_1\to\tau_2$ is obtained by~$(\mathrm{F})$
  or~$(\mathrm{F'})$. Then $t_1 =_\beta F r_1 r_2 \sim_\alpha \tau =
  \tau_1\to\tau_2$ where $r_1 \sim_{<\alpha} \tau_1$, $r_2
  \sim_{<\alpha} \tau_2$. We may assume $\tau_1 \ne \varepsilon$,
  $\tau_2 \ne \omega$ and $\tau_2 \ne \varepsilon$, since otherwise
  $\tau = \omega$ or $\tau = \varepsilon$. By
  Corollary~\ref{r:corollary_t_conv} we have $\Xi r_1 \lambda y . r_2
  (t_2 y) \leadsto \top$, so $\Xi r_1' r_2' \succ \top$ where $r_1
  \reduces_{R} r_1'$, $\lambda y . r_2 (t_2 y) \reduces_{R} r_2'$. By
  inspecting Definition~\ref{r:def_succ} we see that the only possible
  way for $\Xi r_1' r_2' \succ \top$ to hold is when
  condition~$(\Xi^\top)$ is satisfied, i.e., there exists $\tau'$ such
  that $r_1' \sim \tau'$ and for all $t_3 \in \T_{\tau'}$ we have
  $r_2' t_3 \leadsto \top$. By~(\ref{r:comm_4}) in
  Lemma~\ref{r:lem_commute} we have $r_1' \sim_{<\alpha} \tau_1$, so
  it follows from~(\ref{r:comm_5}) in Lemma~\ref{r:lem_commute} that
  $\tau' = \tau_1$. Therefore for any $t_3 \in \T_{\tau_1}$ we have
  $r_2' t_3 \leadsto \top$. Since $r_2 (t_2 t_3) =_{\le 0} (\lambda y
  . r_2 (t_2 y)) t_3 \reduces_{R} r_2' t_3$, we obtain by
  Corollary~\ref{r:corollary_t_conv} that $r_2 (t_2 t_3) \leadsto
  \top$ for any $t_3 \in \T_{\tau_1}$. Because $r_2 \sim_{<\alpha}
  \tau_2$ where $\tau_2 \ne \omega$ and $\tau_2 \ne \varepsilon$, we
  conclude by the inductive hypothesis that the following condition
  holds:
  \begin{itemize}
  \item[$(\star)$] for all $t_3 \in \T_{\tau_1}$ there exists $\rho_2
    \in \T_{\tau_2}$ such that $t_2 t_3 \leadsto \rho_2$.
  \end{itemize}
  Note that $\rho_2$ depends on~$t_3$.

  If $\tau_1 \ne \omega$ then $\T_{\tau_1\to\tau_2}$ contains a
  constant for every set-theoretical function from $\T_{\tau_1}$ to
  $\T_{\tau_2}$. In particular it contains a constant~$c$ such that
  for every $\rho_1 \in \T_{\tau_1}$ we have $\F(c)(\rho_1) \equiv
  \rho_2$ where $\rho_2 \in \T_{\tau_2}$ is a term depending
  on~$\rho_1$ such that $t_2 \rho_1 \leadsto \rho_2$. Such a~$\rho_2$
  exists by~$(\star)$. Therefore by definition of~$\succ$ we have $t_2
  \succ c \in \T_\tau$.

  If $\tau_1 = \omega$ then it suffices to show that there exists a
  single $\rho' \in \T_{\tau_2}$ such that for all~$t_3$ we have $t_2
  t_3 \leadsto \rho'$. Indeed, if this holds then $t_2 \succ K \rho'
  \in \T_{\omega\to\tau_2} = \T_\tau$. Let~$x$ be a
  variable. Obviously $x \in \T_\omega$, so by $(\star)$ there exists
  $\rho' \in \T_{\tau_2}$ such that $t_2 x \leadsto \rho'$, i.e., $t_2
  x \reduces_{R} t' \succ \rho'$ for some term~$t'$. Taking $C \equiv
  t_2 \Box$, we conlude by conditon~(\ref{r:lcsp_1}) in
  Lemma~\ref{r:lem_nu_context} that $t' \equiv C'[x]$ where $C[t_3]
  \reduces_{R} C'[t_3]$ for any term~$r$. By
  condition~(\ref{r:lcsp_2}) in Lemma~\ref{r:lem_nu_context} we have
  $C'[t_3] \succ \rho'$ for any term $t_3$. Therefore for any~$t_3$
  there exists~$t_3'$ such that $t_2 t_3 \reduces_{R} t_3' \succ
  \rho'$, i.e., $t_2 t_3 \leadsto \rho'$. This $\rho'$ depends only
  on~$x$, but not on~$t_3$, so our claim has been established.
\end{proof}

\begin{sslemma}\label{r:lem_cikm_1_3_4}
  The following conditions are satisfied.
  \begin{itemize}
  \item If $L t_1 \leadsto \top$ and for all $t_3$ such that $t_1 t_3
    \leadsto \top$ we have $t_2 t_3 \leadsto \top$, then $\Xi t_1 t_2
    \leadsto \top$.
  \item If $L t_1 \leadsto \top$ and for all $t_3$ such that $t_1 t_3
    \leadsto \top$ we have $H (t_2 t_3) \leadsto \top$, then $H (\Xi
    t_1 t_2) \leadsto \top$.
  \item If $L t_1 \leadsto \top$, and either $L t_2 \leadsto \top$ or
    there is no~$t_3$ such that $t_1 t_3 \leadsto \top$, then $L (F
    t_1 t_2) \leadsto \top$.
  \end{itemize}
\end{sslemma}

\begin{proof}
  Suppose $L t_1 \leadsto \top$. By definitions we have $t_1
  \reduces_{R} t_1' \sim \tau$ for some type $\tau$.

  Assume that for all~$t_3$ such that $t_1 t_3 \leadsto \top$ we have
  $t_2 t_3 \leadsto \top$. Let $t_0 \in \T_\tau$. Then by
  Lemma~\ref{r:lem_sim_correct} we obtain $t_1' t_0 \leadsto
  \top$. Because $t_1 t_0 =_{R} t_1' t_0$, by
  Corollary~\ref{r:corollary_t_conv} we conclude $t_1 t_0 \leadsto
  \top$. Then by assumption $t_2 t_0 \leadsto \top$. Therefore
  by~$(\Xi_i^\top)$ we obtain $\Xi t_1' t_2 \succ \top$. Hence $\Xi
  t_1 t_2 \leadsto \top$.

  Assume that for all $t_3$ such that $t_1 t_3 \leadsto \top$ we have
  $H (t_2 t_3) \leadsto \top$, so $t_2 t_3 \leadsto \top$ or $t_2 t_3
  \leadsto \bot$ by Lemma~\ref{r:lem_h_t_f}. If for all $t_3$ such
  that $t_1 t_3 \leadsto \top$ we have $t_2 t_3 \leadsto \top$, then
  $\Xi t_1 t_2 \leadsto \top$ by the previous paragraph. Otherwise
  using Lemma~\ref{r:lem_sim_correct},
  Corollary~\ref{r:corollary_t_conv} and~$(\Xi^\bot)$ we may conclude
  $\Xi t_1 t_2 \leadsto \bot$ by an argument analogous to the previous
  paragraph. In any case $H (\Xi t_1 t_2) \leadsto \top$
  by~$(L^\top)$, and $(\mathrm{K\omega})$ or
  $(\mathrm{K\varepsilon})$.

  Assume $L t_2 \leadsto \top$. Then $t_2 \reduces_R t_2' \sim
  \tau'$. Then $F t_1' t_2' \sim \tau\to\tau'$, so $L (F t_1 t_2)
  \reduces_R L (F t_1' t_2') \succ \top$.

  Finally, assume there is no~$t_3$ such that $t_1 t_3 \leadsto
  \top$. Then there is no~$t_3$ such that $t_1't_2 \leadsto \top$. By
  Lemma~\ref{r:lem_succ_type} and~(\ref{r:comm_6}) in
  Lemma~\ref{r:lem_commute} we must have $\tau=\varepsilon$. Then $L
  (F t_1 t_2) \leadsto \top$ by~$(\mathrm{F\omega})$, $(L^\top)$ and
  Corollary~\ref{r:corollary_t_conv}.
\end{proof}

\begin{sslemma}\label{r:lem_cikm_2}
  If $\Xi t_1 t_2 \leadsto \top$ then for all terms~$t_3$ such that
  $t_1 t_3 \leadsto \top$ we have $t_2 t_3 \leadsto \top$.
\end{sslemma}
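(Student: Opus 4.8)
The plan is to unfold the definition of $\leadsto$, read off the shape of the derivation witnessing $\Xi t_1 t_2 \leadsto \top$, and then transport truth from $t_2'$ applied to a canonical representative of $t_3$ back to $t_2 t_3$. First I would note that $\Xi t_1 t_2 \leadsto \top$ means $\Xi t_1 t_2 \reduces s \succ \top$ for some $s$, and by Lemma~\ref{r:lem_xi_unique} we have $s \equiv \Xi t_1' t_2'$ with $t_1 \reduces t_1'$ and $t_2 \reduces t_2'$, so that $\Xi t_1' t_2' \succ \top$. Inspecting all the postulates for $t \succ \top$ in Definition~\ref{r:def_succ}, the only one whose subject is an application headed by $\Xi$ is $(\Xi_i^\top)$: the clause $\rho \succ \rho$ and the function-type clause are excluded because $\top$ is canonical of type $o$, and the remaining postulates produce only terms headed by $L$, $H$ or $A_\tau$. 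Since $\succ = \succ_{<\zeta}$ and $\leadsto_{<\zeta} = \leadsto$, this yields a type $\tau$ with $t_1' \sim \tau$ and, for every $t_0 \in \T_\tau$, $t_2' t_0 \leadsto \top$.

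Next I would fix $t_3$ with $t_1 t_3 \leadsto \top$. Because $t_1 \reduces t_1'$ we have $t_1 t_3 \reduces t_1' t_3$, so $t_1' t_3 \leadsto \top$ by Corollary~\ref{r:corollary_t_conv}, and I would split on $\tau$. If $\tau = \varepsilon$, then part~(\ref{r:comm_6}) of Lemma~\ref{r:lem_commute} gives $t_1' t_3 \leadsto \bot$, which together with $t_1' t_3 \leadsto \top$ contradicts Corollary~\ref{r:corollary_leadsto_consistent}; hence this case is vacuous. If $\tau = \omega$, then $\T_\omega$ is the set of all terms, so the $(\Xi_i^\top)$ clause already gives $t_2' t_3 \leadsto \top$ directly; since $t_2 t_3 \reduces t_2' t_3$, Corollary~\ref{r:corollary_t_conv} yields $t_2 t_3 \leadsto \top$.

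In the remaining case $\tau \notin \{\omega, \varepsilon\}$, I would apply Lemma~\ref{r:lem_succ_type} to $t_1' \sim \tau$ and $t_1' t_3 \leadsto \top$ to obtain a canonical $\rho \in \T_\tau$ with $t_3 \leadsto \rho$, i.e.\ $t_3 \reduces t_3'' \succ \rho$. The $(\Xi_i^\top)$ clause then gives $t_2' \rho \leadsto \top$ (as $\rho \in \T_\tau$), and Corollary~\ref{r:corollary_succ_stable} with the context $C \equiv t_2' \Box$ transfers this to $t_2' t_3'' \leadsto \top$. Finally $t_2 t_3 \reduces t_2' t_3''$, so $t_2 t_3 \leadsto \top$ again by Corollary~\ref{r:corollary_t_conv}.

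I expect the main obstacle to be precisely this last bridge: after pinning down $(\Xi_i^\top)$ as the unique source of $\Xi t_1' t_2' \succ \top$, one only knows truth of $t_2'$ on \emph{canonical} inputs of type $\tau$, whereas the goal concerns $t_2$ applied to the \emph{arbitrary} term $t_3$. This is exactly where Lemma~\ref{r:lem_succ_type} (realizing $t_3$ by a canonical $\rho$ of type $\tau$, using that $t_1$ behaves like a predicate for $\tau$) and the substitutivity Corollary~\ref{r:corollary_succ_stable} (replacing $\rho$ by $t_3''$ inside the context $t_2'\,\Box$) do the essential work; the reductions $t_1 \reduces t_1'$, $t_2 \reduces t_2'$, $t_3 \reduces t_3''$ are then reconciled routinely via Corollary~\ref{r:corollary_t_conv}.
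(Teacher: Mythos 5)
Your proposal is correct and follows essentially the same route as the paper's proof: isolating $(\Xi_i^\top)$ as the only source of $\Xi t_1' t_2' \succ \top$, ruling out $\tau = \varepsilon$ via part~(6) of Lemma~\ref{r:lem_commute} and Corollary~\ref{r:corollary_leadsto_consistent}, handling $\tau = \omega$ directly, and in the remaining case combining Lemma~\ref{r:lem_succ_type} with Corollary~\ref{r:corollary_succ_stable} applied to the context $t_2'\,\Box$. The reconciliation of the various reducts via Corollary~\ref{r:corollary_t_conv} also matches the paper.
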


\begin{proof}
  If $\Xi t_1 t_2 \leadsto \top$ then $\Xi t_1 t_2 \reduces_{R} \Xi
  t_1' t_2' \succ \top$ where $t_1 \reduces_{R} t_1'$ and $t_2
  \reduces_{R} t_2'$. The only possibility for $\Xi t_1' t_2' \succ
  \top$ to hold is that condition~$(\Xi^\top)$ holds for $\Xi t_1'
  t_2'$. Thus $t_1' \sim \tau$ for some type $\tau$. Suppose $t_1 t_3
  \leadsto \top$. By Corollary~\ref{r:corollary_t_conv} we have $t_1'
  t_3 \leadsto \top$. Because $t_2 t_3 \reduces_{R} t_2' t_3$, it
  suffices to show that $t_2' t_3 \leadsto \top$. If $\tau = \omega$
  then this is obvious by definition of~$(\Xi^\top)$. We cannot have
  $\tau = \varepsilon$, since if $t_1' \sim \varepsilon$ then
  by~(\ref{r:comm_6}) in Lemma~\ref{r:lem_commute} and by
  Corollary~\ref{r:corollary_leadsto_consistent} there is no $t$ such
  that $t_1' t \leadsto \top$. If $t_1' \sim \tau \ne \omega$ and
  $\tau \ne \varepsilon$, then we use Lemma~\ref{r:lem_succ_type} to
  conclude that there exist $t_3'$ and $\rho \in \T_\tau$ such that
  $t_3 \reduces_{R} t_3' \succ \rho$. Because~$(\Xi_i^\top)$ holds for
  $\Xi t_1' t_2'$, $t_1' \sim \tau$ and $\rho \in \T_\tau$, we have
  $t_2' \rho \leadsto \top$. Since $t_3' \succ \rho$, taking $C \equiv
  t_2' \Box$ we conclude by Corollary~\ref{r:corollary_succ_stable}
  that $t_2' t_3' \leadsto \top$, so $t_2' t_3 \leadsto \top$.
\end{proof}

\begin{sstheorem}
  The systems $\I_\omega^c$ and $\I_\omega$ are strongly consistent,
  i.e., $\Xi H I$ is not derivable in them.
\end{sstheorem}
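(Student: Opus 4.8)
The plan is to read off non-derivability from the model $\M$ of Definition~\ref{r:def_model}. The preceding lemmas (through Lemma~\ref{r:lem_cikm_1_3_4} and Lemma~\ref{r:lem_cikm_2}) verify the clauses of Fact~\ref{r:fact_classical_illat_model}, so $\M$ is a one-state classical illative Kripke model for $\I_\omega^c$. Hence, by the soundness half of Theorem~\ref{r:thm_ikm_complete} (extended to $\I_\omega^c$, where the double-negation axiom is valid simply because $\M$ is classical), every theorem of $\I_\omega^c$ lies in $\Ts$. Since $d \in \Ts$ means $d = [t]_R$ for some $t$ with $t \leadsto \top$, and since by Corollary~\ref{r:corollary_t_conv} the property $t \leadsto \top$ is invariant under $=_R$, it will suffice to prove $\Xi H I \not\leadsto \top$; because $\I_\omega \subseteq \I_\omega^c$, this disposes of both systems at once.

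To establish $\Xi H I \not\leadsto \top$ I would argue by contradiction, assuming $\Xi H I \reduces_R t' \succ \top$. By Lemma~\ref{r:lem_xi_unique} the reduct has the form $t' \equiv \Xi H' I'$ with $H \reduces_R H'$ and $I \reduces_R I'$. Inspecting Definition~\ref{r:def_succ}, the only postulate capable of yielding $\Xi H' I' \succ \top$ is $(\Xi_i^\top)$, which supplies a type $\tau$ with $H' \sim \tau$ and with $I' t_3 \leadsto \top$ for all $t_3 \in \T_\tau$. The key point is that $\tau$ is forced to be $o$: rule $(\mathrm{H})$ gives $H \sim o$, condition~(\ref{r:comm_4}) of Lemma~\ref{r:lem_commute} propagates this to $H' \sim o$, and the uniqueness condition~(\ref{r:comm_5}) then rules out any other type. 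Since $\T_o = \Sigma_o = \{\top, \bot\}$, the clause $(\Xi_i^\top)$ would in particular require $I' \bot \leadsto \top$. But $I \bot \equiv (\lambda x . x)\bot \contr_\beta \bot$ gives $I \bot \leadsto \bot$, and from $I \bot \reduces_R I' \bot$ and Corollary~\ref{r:corollary_t_conv} the hypothesis $I' \bot \leadsto \top$ would also give $I \bot \leadsto \top$. Then $I \bot$ would have both $\top$ and $\bot$ as canonical representatives of type $o$, contradicting Corollary~\ref{r:corollary_leadsto_consistent}. Thus $(\Xi_i^\top)$ fails for every reduct, so $\Xi H I \not\leadsto \top$.

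The hard part is really just the bookkeeping in the second paragraph: one must be sure that no canonical type other than $o$ is available for the reduct $H'$, and that the predicate $I$ genuinely fails at some element of $\T_o$. Both are handled by the uniqueness of canonical representation packaged in Lemma~\ref{r:lem_commute}~(\ref{r:comm_4})--(\ref{r:comm_5}) and Corollary~\ref{r:corollary_leadsto_consistent}. A small but essential point is to keep the illative term $\Xi H I$ distinct from the canonical constant $\bot \in \Sigma_o$ used as the witness $t_3$ against which $I$ fails. Granting $\Xi H I \not\leadsto \top$, we obtain $[\Xi H I]_R \notin \Ts$, so by soundness $\Xi H I$ is underivable in $\I_\omega^c$, and a fortiori in $\I_\omega$, establishing strong consistency.
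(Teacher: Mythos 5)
Your proposal is correct and follows essentially the same route as the paper: verify via the preceding lemmas that $\M$ satisfies Fact~\ref{r:fact_classical_illat_model}, show $\Xi H I \not\leadsto \top$ by observing that only clause $(\Xi_i^\top)$ could apply and that the quantifier range is forced to be $o$, then derive a contradiction from $\bot$ via Corollary~\ref{r:corollary_leadsto_consistent}, and finally invoke soundness. Your second paragraph merely spells out, using Lemma~\ref{r:lem_commute}~(\ref{r:comm_4})--(\ref{r:comm_5}), the step the paper dismisses with ``easily verified by inspecting the definitions.''
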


\begin{proof}
  We verify that the structure~$\M$ constructed in
  Definition~\ref{r:def_model} is a one-state classical illative model
  for~$\I_\omega^c$. It follows from Lemma~\ref{r:lem_extensional}
  that the combinatory algebra of $\M$ is
  extensional. Corollary~\ref{r:corollary_t_conv} implies that $[t]_R
  \in \Ts$ is equivalent to $t \leadsto \top$. We need to check the
  conditions stated in
  Fact~\ref{r:fact_classical_illat_model}. Conditions~(\ref{r:cikm_1}),
  (\ref{r:cikm_3}) and~(\ref{r:cikm_4}) follow from
  Lemma~\ref{r:lem_cikm_1_3_4}. Condition~(\ref{r:cikm_2}) follows
  from Lemma~\ref{r:lem_cikm_2}. Conditions~(\ref{r:cikm_5}),
  (\ref{r:cikm_6}) and~(\ref{r:cikm_7}) follow from definitions.

  It is also easy to see that $\not\forces_\M \Xi H I$. Indeed,
  otherwise we would have $\Xi H I \leadsto \top$, which is possible
  only when~$(\Xi^\top)$ is satisfied for $\Xi H I$. Thus $H \sim
  \tau$ for some type~$\tau$, and for all $t \in \T_\tau$ we have $I t
  \leadsto \top$, so $t \leadsto \top$ by
  Corollary~\ref{r:corollary_t_conv}. It is easily verified by
  inspecting the definitions that we must have $\tau = o$. But then
  $\bot \leadsto \top$ which is impossible by
  Corollary~\ref{r:corollary_leadsto_consistent}.

  Therefore, by the soundness part of
  Theorem~\ref{r:thm_ikm_complete}, the term $\Xi H I$ is not
  derivable in~$\I_\omega^c$, and hence neither in~$\I_\omega$, which
  is a subsystem of~$\I_\omega^c$.
\end{proof}

\section{The embedding}\label{r:sec_embedding}

In this section a syntactic translation from the terms
of~$\mathrm{PRED2}_0$ into the terms of~$\I_0$ is defined and proven
complete for~$\I_0$. The translation is a slight extension of that
from~\cite{illat01}. The method of the completeness proof is by model
construction analogous to that in the previous section. Relinquishing
quantification over predicates and restricting arguments of functions
to base types allows us to significantly simplify this construction
and to extend it to more than one state.

We use the notation~$\Tc$ for the set of types
of~$\mathrm{PRED2}_0$. Recall that~$\Tc$ is defined by the grammar
$\Tc \;::=\; o \;|\; \B \;|\; \B \rightarrow \Tc$, where~$\B$ is a
specific set of base types. We assume that~$\B$ corresponds exactly to
the base types used in the definiton of~$\I_0$. We fix a signature for
$\mathrm{PRED2}_0$, and by $\Sigma_\tau$ denote the set of constants
of type $\tau$ in this signature. We always assume that all variables
of $\mathrm{PRED2}_0$ are present in the set of variables of~$\I_0$.

Recall that by~$\T(\Sigma)$ we denote the set of type-free lambda
terms over a set of primitive constants $\Sigma$, which is assumed to
contain $\Xi$, $L$ and $A_\tau$ for each $\tau \in \B$. We also assume
that $\Sigma$ contains every constant $c \in \Sigma_\tau$ for any
$\tau \in \Tc$. For the sake of uniformity, we will sometimes use the
notation $A_o$ for $H$. For every composite type
$\tau=\tau_1\rightarrow\tau_2 \in \Tc$ we inductively define $A_\tau =
F A_{\tau_1} A_{\tau_2}$. We use the same notational conventions
concerning $K t$, $H t$, etc. as in Section~\ref{r:sec_construction}.

\begin{definition}
  We define inductively a map $\transl{-}$ from the terms of
  $\mathrm{PRED2}_0$ to $\T(\Sigma)$ as follows:
  \begin{itemize}
  \item $\transl{x} = x$ for a variable $x$,
  \item $\transl{c} = c$ for a constant $c$,
  \item $\transl{t_1 t_2} = \transl{t_1} \transl{t_2}$,
  \item $\transl{\varphi \supset \psi} = \transl{\varphi} \supset \transl{\psi}$,
  \item $\transl{\forall x . \varphi} = \Xi A_\tau \lambda x . \transl{\varphi}$ for
    $x \in V_\tau$.
  \end{itemize}

  We extend the map to finite sets of formulas by defining
  $\transl{\Delta}$ to be the image of~$\transl{-}$ on~$\Delta$. We
  also define a mapping~$\Gamma$ from sets of formulas to subsets
  of~$\T(\Sigma)$, which is intended to provide a context for a set of
  formulas. For a finite set of formulas~$\Delta$ we define
  $\Gamma(\Delta)$ to contain the following:
  \begin{itemize}
  \item $A_\tau x$ for all $x \in FV(\Delta)$ s.t. $x \in V_\tau$, and
    all types $\tau$,
  \item $A_\tau c$ for all $c \in \Sigma_\tau$, and all types $\tau$,
  \item $L A_\tau$ for all $\tau \in \B$,
  \item $A_\tau y$ for all $\tau \in \B$ and some $y \in V_\tau$ such
    that $y \notin FV(\Delta)$.
  \end{itemize}
\end{definition}

\begin{lemma}\label{r:lem_types_non_empty}
  For any $\tau \in \Tc$ and any $\Delta$ there exists a term $t$ such
  that $\Gamma(\Delta) \proves_{\I_0} A_\tau t$.
\end{lemma}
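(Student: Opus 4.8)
We need to show that for any type $\tau \in \Tc$ (where $\Tc$ is the PRED2_0 type system $o \mid \B \mid \B \to \Tc$) and any set of formulas $\Delta$, the context $\Gamma(\Delta)$ proves $A_\tau t$ for some term $t$. Intuitively, this says that every type $\tau$ is "inhabited" relative to the context $\Gamma(\Delta)$: we can always exhibit a witness term $t$ that $\I_0$ recognizes as having type $\tau$ (i.e. $A_\tau t$ holds). The witness $t$ will be built from variables and the way $A_\tau$ unfolds for function types.

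**The plan.** I would proceed by induction on the structure of $\tau \in \Tc$. Recall $A_o = H$, for base types $A_\tau$ is primitive, and for composite $\tau = \tau_1 \to \tau_2$ we have $A_\tau = F A_{\tau_1} A_{\tau_2}$ by definition. The base cases are the easy ones and come straight from the definition of $\Gamma(\Delta)$: if $\tau \in \B$, then $\Gamma(\Delta)$ contains $A_\tau y$ for some $y \in V_\tau \setminus FV(\Delta)$, so we take $t = y$ and the judgement $\Gamma(\Delta) \proves A_\tau y$ is an axiom (instance of $\Gamma, t \proves t$). If $\tau = o$, then $A_o = H$, and we need some $t$ with $H t$ derivable; since $\Gamma(\Delta)$ contains $L A_{\tau'}$ for each $\tau' \in \B$, and $H_i$ lets us derive $H t$ from $\proves t$, we can produce e.g. $H(A_{\tau'} y)$ from the axiom $A_{\tau'} y \in \Gamma(\Delta)$ via rule $H_i$, giving $t = A_{\tau'} y$. (Alternatively $t = L H$ works since $\proves L H$ is an axiom and $H_i$ yields $H(L H)$.)

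**The inductive step.** For $\tau = \tau_1 \to \tau_2$ with $\tau_1 \in \B$ (note PRED2_0 restricts function domains to base types), we have $A_\tau = F A_{\tau_1} A_{\tau_2}$. By the convention from Notation~\ref{r:convention_kt}, $F A_{\tau_1} A_{\tau_2}$ unfolds to $\lambda f . \Xi A_{\tau_1} (\lambda x . A_{\tau_2}(f x))$. By the induction hypothesis there is a term $s$ with $\Gamma(\Delta) \proves A_{\tau_2} s$. The natural witness is a constant function $t = K s = \lambda z . s$ (with $z$ fresh). To show $\Gamma(\Delta) \proves A_\tau (K s)$, I would use rule $\Xi_i$: after $\beta$-reducing $A_\tau (K s)$ via \mbox{Eq} to $\Xi A_{\tau_1} (\lambda x . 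A_{\tau_2} s)$, it suffices (using $\Xi_i$ with the premise $\Gamma(\Delta) \proves L A_{\tau_1}$, which holds for base $\tau_1 \in \B$ by the $L A_\tau$ items in $\Gamma(\Delta)$) to derive $\Gamma(\Delta), A_{\tau_1} x \proves A_{\tau_2} s$ for fresh $x$. This follows from the IH witness $\Gamma(\Delta) \proves A_{\tau_2} s$ by weakening (rule Weak), since $x \notin FV(s)$ can be arranged by choosing $s$ closed or renaming.

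**Main obstacle.** The delicate point is verifying the side condition $x \notin FV(\Gamma(\Delta), A_{\tau_1}, A_{\tau_2})$ for $\Xi_i$ and ensuring the IH witness $s$ can be chosen so that the freshness requirements mesh. Since $A_{\tau_1}, A_{\tau_2}$ are closed terms (built from the primitive constants $A_{\tau'}$ and $F$), and $\Gamma(\Delta)$ only mentions the free variables of $\Delta$ plus finitely many fresh witnesses, a fresh $x$ is always available — this is guaranteed because we may assume infinitely many variables outside $FV(\Delta)$. The one genuinely load-bearing step is the application of $\Xi_i$, which requires the premise $\Gamma \proves L t_1$; this is exactly why the base-type restriction $\tau_1 \in \B$ matters, as it makes $L A_{\tau_1}$ immediately available from $\Gamma(\Delta)$. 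I would keep the freshness bookkeeping light but make sure to invoke $\beta\eta$-conversion (rule \mbox{Eq}) explicitly to pass between $A_\tau(K s)$ and its unfolded $\Xi$-form before applying $\Xi_i$.
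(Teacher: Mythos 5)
Your proof is correct and follows essentially the same route as the paper: induction on $\tau$, with the base cases read off from $\Gamma(\Delta)$ (and $H(LH)$ for $\tau=o$), and the inductive step taking the constant function $K s$ on the IH witness and applying $\Xi_i$ with $L A_{\tau_1}$ supplied by $\Gamma(\Delta)$ since $\tau_1 \in \B$. The extra care you take with the \mbox{Eq} unfolding of $F A_{\tau_1} A_{\tau_2} (K s)$ and the freshness of $x$ only makes explicit what the paper leaves implicit.
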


\begin{proof}
  First note that by a straightforward induction on the size of~$\tau$
  we obtain $\Gamma(\Delta) \proves L A_\tau$ for any type~$\tau$.

  We prove the lemma by induction on the size of~$\tau$. If $\tau \in
  \B$ then $A_\tau y \in \Gamma(\Delta)$ for some variable~$y$. If
  $\tau = o$ then notice that e.g. $\proves H (L H)$. If $\tau =
  \tau_1\to\tau_2$ then we need to prove that $\Gamma(\Delta) \proves
  F A_{\tau_1} A_{\tau_2} t$ for some term~$t$. Because
  $\Gamma(\Delta) \proves L A_{\tau_1}$, it suffices to show that
  $\Gamma(\Delta), A_{\tau_1} x \proves A_{\tau_2} (t x)$ for some
  term $t$ and some $x \notin FV(\Gamma(\Delta), t)$. By the inductive
  hypothesis there exists a term~$t_2$ such that $\Gamma(\Delta)
  \proves A_{\tau_2} t_2$. So just take $x \notin FV(\Gamma(\Delta),
  t_2)$ and $t \equiv K t_2$.
\end{proof}

\begin{theorem}
  The embedding is sound, i.e., $\Delta \proves_{\mathrm{PRED2}_0}
  \varphi$ implies $\transl{\Delta}, \Gamma(\Delta, \varphi)
  \proves_{\I_0} \transl{\varphi}$.
\end{theorem}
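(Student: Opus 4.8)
The statement is a soundness theorem for the embedding, so the natural strategy is induction on the derivation of $\Delta \proves_{\mathrm{PRED2}_0} \varphi$. For each rule of $\mathrm{PRED2}_0$ (the axiom, $\supset_i$, $\supset_e$, $\forall_i$, $\forall_e$) I would show that the corresponding property holds on the illative side, using the admissible rules $P_e$, $P_i$, $P_H$ and $\mathrm{Weak}$ from Lemma~\ref{r:lem_illat_admissible} together with the native rules $\Xi_i$, $\Xi_e$, and the fact that types are inhabited and well-formed, i.e. $\Gamma(\Delta) \proves_{\I_0} L A_\tau$ and $\Gamma(\Delta) \proves_{\I_0} A_\tau t$ for suitable $t$ (Lemma~\ref{r:lem_types_non_empty}).

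**Key steps in order.** First I would record a few bookkeeping facts about $\transl{-}$ and $\Gamma$: that $\transl{\varphi[x/t]} = \transl{\varphi}[x/\transl{t}]$ (a routine substitution lemma matching $\transl{-}$ with substitution, needed for the $\forall_e$ case), that $\Gamma$ is monotone, i.e. $\Gamma(\Delta) \subseteq \Gamma(\Delta')$ when $\Delta \subseteq \Delta'$, and that $\Gamma(\Delta)$ always contains the hypotheses $A_\tau x$ for each free variable $x_\tau$ and $L A_\tau$ for base types, from which $\Gamma(\Delta) \proves_{\I_0} L A_\tau$ follows for every type $\tau$ by the argument already given in the proof of Lemma~\ref{r:lem_types_non_empty}. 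Then I would proceed by cases on the last rule:

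\begin{itemize}
\item \emph{Axiom} $\Delta, \varphi \proves \varphi$: immediate, since $\transl{\varphi} \in \transl{\Delta,\varphi}$ and axiom~(1) of $\I_0$ gives $\transl{\Delta}, \transl{\varphi}, \Gamma(\Delta,\varphi) \proves \transl{\varphi}$.
\item \emph{$\supset_i$}: from $\Delta, \varphi \proves \psi$ the IH gives $\transl{\Delta}, \transl{\varphi}, \Gamma(\Delta,\varphi,\psi) \proves \transl{\psi}$; applying the admissible rule $P_i$ (which needs $\Gamma \proves H \transl{\varphi}$) yields $\transl{\Delta}, \Gamma(\cdots) \proves \transl{\varphi} \supset \transl{\psi} = \transl{\varphi \supset \psi}$. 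Here I must produce $H \transl{\varphi}$, i.e. show the translation of any formula is a proposition; this is where a separate \emph{formation lemma} stating $\Gamma(\Delta,\varphi) \proves_{\I_0} H \transl{\varphi}$ for every formula $\varphi$ is needed, proved by induction on $\varphi$ using $P_H$ and $\Xi_H$ and the axioms $L H$, $A_\tau x$.
\item \emph{$\supset_e$}: combine the two IH's via the admissible $P_e$.
\item \emph{$\forall_i$}: from $\Delta \proves \varphi$ with $x_\tau \notin FV(\Delta)$, the IH gives a derivation of $\transl{\varphi}$; since $\transl{\forall x_\tau.\varphi} = \Xi A_\tau (\lambda x.\transl{\varphi})$, I apply $\Xi_i$, which requires $\Gamma, A_\tau x \proves \transl{\varphi}$ and $\Gamma \proves L A_\tau$, together with the side condition $x \notin FV(\Gamma, A_\tau, \lambda x.\transl{\varphi})$ arranged by $\alpha$-renaming and the freshness built into $\Gamma$.
\item \emph{$\forall_e$}: instantiate $\Xi A_\tau(\lambda x.\transl{\varphi})$ at $\transl{t}$ using $\Xi_e$, supplying $A_\tau \transl{t}$ from Lemma~\ref{r:lem_types_non_empty} (or the explicit hypotheses in $\Gamma$), then use $\mathrm{Eq}$ to $\beta$-contract $(\lambda x.\transl{\varphi})\transl{t}$ to $\transl{\varphi}[x/\transl{t}] = \transl{\varphi[x/t]}$ via the substitution lemma.
\end{itemize}

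**Main obstacle.** The principal difficulty is the formation lemma $\Gamma(\Delta,\varphi) \proves_{\I_0} H \transl{\varphi}$, i.e. that every translated formula is provably a proposition; this is needed to discharge the $H t_1$ premise in $P_i$ and $P_H$ for the $\supset_i$ case, and it must be proved by a simultaneous/auxiliary induction rather than folded naively into the main one. The delicate point is the quantifier case $\transl{\forall x_\tau.\varphi} = \Xi A_\tau(\lambda x.\transl{\varphi})$, where $\Xi_H$ demands $\Gamma, A_\tau x \proves H((\lambda x.\transl{\varphi}) x)$ together with $\Gamma \proves L A_\tau$ and the correct freshness of $x$; here I would first use $\mathrm{Eq}$ to rewrite $(\lambda x.\transl{\varphi})x$ to $\transl{\varphi}$ and then appeal to the formation IH under the extended context. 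Managing the side conditions on free variables and the interaction between $\Gamma(\Delta)$ growing as hypotheses are discharged (ensuring $\Gamma$ still supplies the right $A_\tau x$ and fresh $A_\tau y$) is the source of most of the technical care, even though each individual step is routine.
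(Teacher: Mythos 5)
Your overall strategy --- induction on the derivation, discharging each rule of $\mathrm{PRED2}_0$ with the admissible rules of Lemma~\ref{r:lem_illat_admissible} and the native $\Xi$-rules, plus an auxiliary formation lemma $\Gamma(\Delta,\varphi) \proves_{\I_0} H\transl{\varphi}$ and a substitution lemma $\transl{\varphi[x/t]} = \transl{\varphi}[x/\transl{t}]$ --- is exactly the paper's strategy, and your identification of the formation lemma as a necessary auxiliary induction is correct (the paper leaves it implicit in ``using Lemma~\ref{r:lem_illat_admissible}''). However, there is a genuine gap in the one case the paper singles out as the only interesting one, namely $\supset_e$, which you dismiss as ``combine the two IH's via the admissible $P_e$.'' The inductive hypotheses give $\transl{\Delta}, \Gamma(\Delta,\varphi\supset\psi) \proves_{\I_0} \transl{\varphi\supset\psi}$ and $\transl{\Delta}, \Gamma(\Delta,\varphi) \proves_{\I_0} \transl{\varphi}$, so applying $P_e$ yields a judgement whose context contains $\Gamma(\Delta,\varphi)$, in particular the hypotheses $A_\tau x$ for every $x \in FV(\varphi)$. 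The target judgement is $\transl{\Delta}, \Gamma(\Delta,\psi) \proves_{\I_0} \transl{\psi}$, whose context contains $A_\tau x$ only for $x \in FV(\Delta,\psi)$. When $\varphi$ has a free variable $x$ not occurring in $\Delta$ or $\psi$ --- which the rule $\supset_e$ permits --- the combined derivation carries a spurious hypothesis $A_\tau x$ that is simply not available in the target context, and weakening goes in the wrong direction to remove it. Your plan as written therefore proves a strictly weaker statement in this case.

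The missing idea is the paper's elimination step: for each such $x$, take the inhabitant $t'$ with $\Gamma(\Delta,\psi) \proves_{\I_0} A_\tau t'$ supplied by Lemma~\ref{r:lem_types_non_empty}, show by a secondary induction on the length of the derivation that one may replace the hypothesis $A_\tau x$ by $A_\tau t'$ throughout (the conclusion $\transl{\psi}$ is unaffected since $x \notin FV(\transl{\Delta},\transl{\psi})$), and then cut the hypothesis $A_\tau t'$ against its derivation. Without some such device the induction does not close. A smaller instance of the same looseness appears in your $\forall_e$ case, where you need $\Gamma(\Delta,\varphi[x/t]) \proves_{\I_0} A_\tau \transl{t}$ for the \emph{specific} term $t$ being substituted; Lemma~\ref{r:lem_types_non_empty} only provides \emph{some} inhabitant, so you additionally need a typing lemma (by induction on $t$, using the hypotheses $A_\sigma c$, $A_\sigma y$ in $\Gamma$ and rule $\Xi_e$ for applications) to the effect that every translated term of $\mathrm{PRED2}_0$ is provably of its type.
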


\begin{proof}
  Induction on the length of derivation of $\Delta
  \proves_{\mathrm{PRED2}_0} \varphi$, using
  Lemma~\ref{r:lem_illat_admissible}. The only interesting case is
  with modus-ponens, as from the inductive hypothesis we may only
  directly derive the judgement $\transl{\Delta}, \Gamma(\Delta,
  \psi), \Gamma(\varphi) \proves_{\I_0} \transl{\psi}$. To get rid of
  $\Gamma(\varphi)$ on the left, we note that if $t \in
  \Gamma(\varphi) \setminus \Gamma(\Delta, \psi)$ then $t \equiv
  A_\tau x$ for $x \in FV(\varphi) \setminus FV(\Delta, \psi)$. Now,
  by Lemma~\ref{r:lem_types_non_empty} there exists $t'$ such that
  $\Gamma(\Delta, \psi) \proves_{\I_0} A_\tau t'$. It is not difficult
  to show by induction on the length of derivation that
  $\transl{\Delta}, \Gamma(\Delta, \psi), \Gamma(\varphi)[x/t']
  \proves_{\I_0} \transl{\psi}$, i.e., that we may change $A_\tau x$ on
  the left to $A_\tau t'$. To eliminate $A_\tau t'$ altogether, it
  remains to notice that if $\Gamma, t_1 \proves_{\I_0} t_2$ and
  $\Gamma \proves_{\I_0} t_1$ then $\Gamma \proves_{\I_0} t_2$.

  If we had extended our semantics for $\mbox{PRED2}_0$ a bit by
  allowing non-constant domains, then we could also give a relatively
  simple semantic proof by transforming any illative Kripke model for
  $\I_0$ to a Kripke model for $\mbox{PRED2}_0$, and appealing to the
  completeness part of Theorem~\ref{r:thm_ikm_complete}.
\end{proof}

The rest of this section is devoted to proving that the embedding is
also complete.

\medskip

Let $\N$ be a Kripke model for $\mbox{PRED2}_0$. We will now construct
an illative Kripke model $\M$ such that $\M$ will ``mirror'' $\N$,
i.e., exactly the translations of true statements in a state of $\N$
will be true in the corresponding state of $\M$. This construction is
the crucial step in the completeness proof. It is similar to the
construction given in Section~\ref{r:sec_construction}. For the rest
of this section we assume a fixed~$\N$.

We define a set of primitive constants $\Sigma^+$ and the sets
$\Sigma_\tau$ of canonical constants of type $\tau$, just like in
Definition~\ref{r:def_canonical}, but restricting ourselves only to
the types in~$\Tc$ (i.e.~the types of~$\mbox{PRED2}_0$). Note that
there is a bijection~$\delta_\tau$ between~$\Sigma_\tau$
and~$\D_\tau^\N$. We often drop the subscript in~$\delta_\tau$. We
also include in~$\Sigma^+$ an infinite set~$\Sigma^\nu$ of
\emph{external constants}. Note that $\Sigma^+$ is disjoint from the
signature~$\Sigma$ of~$\M$ which we defined earlier. The terms
over~$\Sigma$ form the syntax. The terms over~$\Sigma^+$ are used to
build the model. To every constant $c \in \Sigma$ corresponds exactly
one constant $c^+ \in \Sigma^+$ such that $\valuation{c}{\N}{} =
\delta(c^+)$. This correspondence, however, need not be injective, as
there may be another constant $c' \in \Sigma$, $c' \ne c$, such that
$\valuation{c'}{\N}{} = \delta(c^+)$.

Let $\Sc$ be the set of states of $\N$. By $\top \in \Sigma_o$ we
denote the constant such that $\varsigma_\N(\delta(\top)) = \Sc$, and
by $\bot \in \Sigma_o$ the constant such that
$\varsigma_\N(\delta(\bot)) = \emptyset$. In what follows $\rho$,
$\rho'$, etc., stand for~$\top$ or~$\bot$. Note that~$\Sigma_o$ may
contain other elements in addition to~$\top$ and~$\bot$. In this
section we use $t$, $t_1$, $t_2$, etc., for \emph{closed} terms,
unless otherwise stated.

\begin{definition}\label{r:def_reductions_2}
  We construct a reduction system $R$ as follows. The terms of $R$ are
  the type-free lambda-terms over $\Sigma^+$. The reduction rules of $R$
  are as follows:
  \begin{itemize}
  \item rules of $\beta$- and $\eta$-reduction,
  \item $c c_1 \contr c_2$ for $c \in \Sigma_{\tau_1\to\tau_2}$, $c_1
    \in \Sigma_{\tau_1}$ and $c_2 \in \Sigma_{\tau_2}$ such that
    $\F(c)(c_1) = c_2$.
  \end{itemize}
  It is easy to see that $R$ has the Church-Rosser property.
\end{definition}

\begin{definition}\label{r:def_sim_succ_2}
  For each ordinal $\alpha$ and each state $s \in \Sc$ we inductively
  define a relation~$\succ_\alpha^s$ between terms and~$\top$
  or~$\bot$. The notations $\succ_{<\alpha}^s$,
  $\leadsto_{<\alpha}^s$, etc., have analogous meaning to those in
  Section~\ref{r:sec_construction}.

  We postulate $t \succ_{\alpha}^s \top$ for $\alpha \ge 0$ and all
  closed terms~$t$ such that:
  \begin{enumerate}
  \item $t \equiv c$ for some $c \in \Sigma_o$ such that $s \in
   \varsigma_\N(\delta(c))$, or
  \item $t \equiv L A_{\tau}$ for some $\tau \in \B$, or
  \item $t \equiv L H$, or
  \item $t \equiv A_{\tau} c$ for $\tau \in \B$ and $c \in
    \Sigma_\tau$, or
  \item $t \equiv H c$ for $c \in \Sigma_o$.
  \end{enumerate}

  When $\alpha > 0$ we postulate $t \succ_\alpha^s \top$ for all
  closed terms~$t$ such that one of the following holds:
  \begin{enumerate}
  \item[$(\Xi_\top)$] $t \equiv \Xi A_\tau t_1$ where $\tau \in \B
    \cup \{o\}$ and~$t_1$ is such that for all $s' \ge s$ and all $c
    \in \Sigma_\tau$ we have $t_1 c \leadsto_{<\alpha}^{s'} \top$,
  \item[$(\Ps_\top)$] $t \equiv \Xi (K t_1) t_2$ where
    \begin{itemize}
    \item $t_1 \leadsto_{<\alpha}^s \top$ or $t_1 \leadsto_{<\alpha}^s
      \bot$, and
    \item for all $s' \ge s$ such that $t_1 \leadsto_{<\alpha}^{s'}
      \top$ we have $t_2 \reduces_R K t_2'$ with $t_2'
      \succ_{<\alpha}^{s'} \top$,
    \end{itemize}
  \item[$(H_\top)$] $t \equiv H t_1$, and $t_1 \leadsto_{<\alpha}^s
    \top$ or $t_1 \leadsto_{<\alpha}^s \bot$.
  \end{enumerate}

  Finally, we postulate $t \succ_\alpha^s \bot$ for $\alpha \ge 0$ and
  all closed terms~$t$ such that one of the following holds:
  \begin{enumerate}
  \item[$(c_\bot)$] $t \equiv c \in \Sigma_o$ and $s \notin
    \varsigma_\N(\delta(c))$,
  \item[$(\Xi_\bot)$] $t \equiv \Xi A_\tau t_1$ and $\tau \in \B \cup
    \{o\}$, and
    \begin{itemize}
    \item for all $c \in \Sigma_\tau$ and all $s' \ge s$ we have $t_1
      c \leadsto_{<\alpha}^{s'} \top$ or $t_1 c
      \leadsto_{<\alpha}^{s'} \bot$,
    \item there exist a constant $c \in \Sigma_\tau$ and a state $s'
      \ge s$ such that $t_1 c \leadsto_{<\alpha}^{s'} \bot$,
    \end{itemize}
  \item[$(\Ps_\bot)$] $t \equiv \Xi (K t_1) (K t_2)$, and
    \begin{itemize}
    \item $t_1 \leadsto_{<\alpha}^s \top$ or $t_1 \leadsto_{<\alpha}^s
      \bot$, and
    \item for all $s' \ge s$ such that $t_1
      \leadsto_{<\alpha}^{s'} \top$ we have $t_2
      \leadsto_{<\alpha}^{s'} \top$ or $t_2 \leadsto_{<\alpha}^{s'}
      \bot$.
    \item there exists $s' \ge s$ such that $t_1
      \leadsto_{<\alpha}^{s'} \top$ and $t_2 \leadsto_{<\alpha}^{s'}
      \bot$.
    \end{itemize}
  \end{enumerate}
\end{definition}

In~\cite{Czajka2013JSL} this definition is incorrect. In fact,
Lemma~5.9 of~\cite{Czajka2013JSL} is false, because of the presence of
type~$\varepsilon$. To correct this we need to separately consider the
case when~$\Xi$ encodes implication, which is done here by means of
the rules~$(\Ps_\top)$ and~$(\Ps_\bot)$. This change requires
reworking the subsequent correctness proof.

With the corrected definition, it is not obvious that for $\alpha \le
\beta$ we have $\succ_\alpha^s\; \subseteq \;\succ_\beta^s$. We will
show this only in Lemma~\ref{r:lem_monotone}. However, for $\alpha \le
\beta$ we obviously have $\succ_{<\alpha}^s\; \subseteq
\;\succ_{<\beta}^s$, and consequently $\leadsto_{<\alpha}^s\;
\subseteq \;\leadsto_{<\beta}^s$.

\begin{lemma}\label{r:lem_succ_sim_red}
  If $t_1 \succ_\alpha^s \rho$ and $t_1 \reduces_R t_2$ then $t_2
  \succ_\alpha^s \rho$.
\end{lemma}

\begin{proof}
  This follows by an easy induction on~$\alpha$, using the
  Church-Rosser property of~$R$.
\end{proof}

\begin{corollary}\label{r:corollary_t_conv_2}
  If $t =_R t'$ then $t \leadsto_\alpha^s \rho$ is equivalent to $t'
  \leadsto_\alpha^s \rho$.
\end{corollary}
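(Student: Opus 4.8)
The plan is to mirror the proof of the corresponding result in Section~\ref{r:sec_construction}, namely Corollary~\ref{r:corollary_t_conv}, whose one-line justification appealed to the commutation and $\succ$-stability conditions of Lemma~\ref{r:lem_commute}. In the present setting there is a single reduction system $R$, so the role of commutation is now played by the Church--Rosser property of $R$ recorded in Definition~\ref{r:def_reductions_2}, while the role of stability of $\succ$ under reduction is played by condition~(\ref{r:ssr_1}) of Lemma~\ref{r:lem_succ_sim_red}.

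First I would unfold the definition: $t \leadsto_\alpha^s \rho$ means that $t \reduces_R t_0$ for some term $t_0$ with $t_0 \succ_\alpha^s \rho$. By the symmetry of the statement in $t$ and $t'$ it suffices to prove one implication, so assume $t =_R t'$ and $t \leadsto_\alpha^s \rho$, and fix such a witness $t_0$.

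The core of the argument is a diagram chase. Since $R$ is Church--Rosser and $t =_R t'$, there is a common reduct $t_1$ with $t \reduces_R t_1$ and $t' \reduces_R t_1$. Now $t_0$ and $t_1$ are both $R$-reducts of $t$, so by Church--Rosser again they share a reduct $t_2$ with $t_0 \reduces_R t_2$ and $t_1 \reduces_R t_2$. Applying condition~(\ref{r:ssr_1}) of Lemma~\ref{r:lem_succ_sim_red} to $t_0 \succ_\alpha^s \rho$ and $t_0 \reduces_R t_2$ yields $t_2 \succ_\alpha^s \rho$. Since $t' \reduces_R t_1 \reduces_R t_2$, we conclude $t' \reduces_R t_2 \succ_\alpha^s \rho$, that is, $t' \leadsto_\alpha^s \rho$, as required.

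There is no real obstacle here; the only point requiring a moment's care is that Lemma~\ref{r:lem_succ_sim_red} is stated only for $\alpha > 0$. The case $\alpha = 0$ must be handled separately, but it is trivial: by inspection of Definition~\ref{r:def_sim_succ_2}, every term $t_0$ with $t_0 \succ_0^s \rho$ is in $R$-normal form (each such $t_0$ is a constant, or one of $L A_\tau$, $L H$, $A_\tau c$, $H c$, none of which contains a $\beta$-, $\eta$-, or constant-application redex). Hence $t_0 \reduces_R t_2$ forces $t_2 \equiv t_0$, and the same chase goes through verbatim.
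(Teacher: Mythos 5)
Your proof is correct and is precisely the argument the paper intends: the corollary is stated without proof as an immediate consequence of the Church--Rosser property of $R$ and condition~(1) of Lemma~\ref{r:lem_succ_sim_red}, exactly mirroring how Corollary~\ref{r:corollary_t_conv} follows from Lemma~\ref{r:lem_commute}. Your extra observation that the $\alpha=0$ case needs separate (trivial) treatment because Lemma~\ref{r:lem_succ_sim_red} is stated only for $\alpha>0$ is a careful and accurate touch that the paper glosses over.
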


\begin{corollary}\label{r:corollary_t_conv_3}
  If $t \leadsto_\alpha^s \top$ and $t \leadsto_\alpha^s \bot$ then
  there exists~$t'$ such that $t' \succ_\alpha^s \top$ and $t'
  \succ_\alpha^s \bot$.
\end{corollary}

\begin{lemma}\label{r:lem_upward_closed}
  For all ordinals $\alpha$ and all $s \in \Sc$ we have:
  \begin{enumerate}
  \item if $t \succ_\alpha^s \top$ and $s' \ge s$ then $t
    \succ_\alpha^{s'} \top$,
  \item if $t \succ_\alpha^s \bot$ and $s' \ge s$ then $t
    \succ_\alpha^{s'} \top$ or $t \succ_\alpha^{s'} \bot$.
  \end{enumerate}
\end{lemma}

\begin{proof}
  Induction on $\alpha$.
  \begin{enumerate}
  \item Follows directly from the inductive hypothesis.
  \item The only non-obvious cases are with~$(\Xi_\bot)$
    and~$(\Ps_\bot)$. Suppose $t \equiv \Xi A_\tau t_1 \succ_\alpha^s
    \bot$ with:
    \begin{itemize}
    \item for all $c \in \Sigma_\tau$ and all $s'' \ge s$ we have $t_1
      c \leadsto_{<\alpha}^{s''} \top$ or $t_1 c
      \leadsto_{<\alpha}^{s''} \bot$,
    \item there exist a constant $c \in \Sigma_\tau$ and a state $s''
      \ge s$ such that $t_1 c \leadsto_{<\alpha}^{s''} \bot$,
    \end{itemize}
    Let $s' \ge s$. The first condition obviously still holds
    with~$s'$ substituted for~$s$. If the second condition does not
    hold, then by the first condition:
    \begin{itemize}
    \item for all $c \in \Sigma_\tau$ and all $s'' \ge s'$ we have
      $t_1 c \leadsto_{<\alpha}^{s''} \top$.
    \end{itemize}
    This implies $t \succ_\alpha^{s'} \top$. The argument
    for~$(\Ps_\bot)$ is analogous.
  \end{enumerate}
\end{proof}

\begin{corollary}\label{r:corollary_upward_closed}
  If $t \leadsto_\alpha^s \top$ then $t \leadsto_\alpha^{s'} \top$ for
  $s' \ge s$.
\end{corollary}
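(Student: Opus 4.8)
The plan is to unfold the definition of $\leadsto_\alpha^s$ and then appeal directly to Lemma~\ref{r:lem_upward_closed}. Recall that, following the convention carried over from Section~\ref{r:sec_construction}, the statement $t \leadsto_\alpha^s \top$ means that there is a term $t'$ with $t \reduces_R t'$ and $t' \succ_\alpha^s \top$. The key observation is that, in contrast to Section~\ref{r:sec_construction}, the reduction system $R$ of Definition~\ref{r:def_reductions_2} is a single fixed system that does not depend on the state $s$ (nor on $\alpha$); all the state-dependence of $\leadsto_\alpha^s$ is concentrated in the relation $\succ_\alpha^s$.

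Concretely, given $t \leadsto_\alpha^s \top$, I would fix a witness $t'$ with $t \reduces_R t' \succ_\alpha^s \top$. Applying Lemma~\ref{r:lem_upward_closed} to $t'$ yields $t' \succ_\alpha^{s'} \top$ for every $s' \ge s$. Since the reduction $t \reduces_R t'$ is entirely independent of the state, this same $t'$ witnesses $t \leadsto_\alpha^{s'} \top$, which is precisely the claim.

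There is no genuine obstacle at the level of the corollary: it is an immediate consequence of the lemma together with the state-independence of $\reduces_R$. All the real content — the monotonicity of $\succ_\alpha^s$ along $\le$ — is already discharged in the proof of Lemma~\ref{r:lem_upward_closed}, which proceeds by induction on $\alpha$ and rests on the fact that every clause defining $\succ_\alpha^s \top$ (and the auxiliary $\sim_\alpha^s$) refers to states only through upward-closed conditions: the explicit quantification over $s' \ge s$ in $(\Xi_i^\top)$ and $(\Xi_H^\top)$, and, in the base cases, the upward-closedness of $\varsigma_\N(c)$.
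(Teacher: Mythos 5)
Your proof is correct and matches the paper's intent exactly: the paper states this corollary without proof as an immediate consequence of Lemma~\ref{r:lem_upward_closed}, and your unfolding of $\leadsto_\alpha^s$ as $t \reduces_R t' \succ_\alpha^s \top$ together with the observation that the reduction system $R$ of Definition~\ref{r:def_reductions_2} is state-independent is precisely the reason the corollary follows. Nothing is missing.
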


\begin{remark}\label{r:rem_problem_hol}
  The necessity of the above corollary is precisely the reason why it
  is not easy to extend this construction to the case of full
  higher-order intuitionistic logic, i.e., when we have functions and
  predicates of all types and more than one state. In that case we
  would need separate reduction systems~$R_\alpha^s$ for each~$s$
  and~$\alpha$, similarily to what is done in
  Section~\ref{r:sec_construction}. But then it would not be the case
  that $R_\alpha^s \subseteq R_\alpha^{s'}$ for $s' \ge s$. Roughly
  speaking, this is because $t \succ_\alpha^s \bot$ is interpreted as
  ``$t$ is not true in state~$s$ basing on what we know at
  stage~$\alpha$'', and not as ``$t$ is false in state~$s$''. Thus we
  may have $t \succ_\alpha^s \bot$ and $t \succ_\alpha^{s'} \top$ for
  some $s' \ge s$. This by itself is not yet a fatal obstacle, because
  we really only care about $t \leadsto_\alpha^s \top$ being
  monotonous w.r.t. state ordering. However, the condition $t
  \succ_\alpha^s \bot$ would be used to define~$R_\alpha^s$, which
  would make $R_\alpha^s$ non-monotonous w.r.t~$s$. Thus $t
  \leadsto_\alpha^s \top$ would not be monotonous either, as it is
  equivalent to $t \reduces_{R_\alpha^s} t' \succ_\alpha^s
  \top$. Hence the corollary would fail. This explains why we do not
  simply give a single construction generalizing both the present one
  and the one from Section~\ref{r:sec_construction}.
\end{remark}

\begin{lemma}\label{r:lem_monotone}
  For all ordinals $\alpha$ and all $s \in \Sc$ we have:
  \begin{enumerate}
  \item if $t \succ_{<\alpha}^s \rho$ then $t \succ_\alpha^s \rho$,
  \item if $t \succ_\alpha^s \top$ then $t \not\succ_\alpha^s \bot$.
  \end{enumerate}
\end{lemma}

\begin{proof}
  Induction on~$\alpha$. First note that the inductive hypothesis and
  Corollary~\ref{r:corollary_t_conv_3} imply:
  \begin{itemize}
  \item[$(\star)$] if $t \leadsto_{<\alpha}^s \top$ then $t
    \not\leadsto_{<\alpha} \bot$.
  \end{itemize}

  Now, we check the conditions (1) and (2).
  \begin{enumerate}
  \item The problem is with the universal quantification
    in~$(\Ps_\top)$ and~$(\Ps_\bot)$. For instance,
    consider~$(\Ps_\top)$, i.e., $t \equiv \Xi (K t_1) t_2
    \succ_{\beta}^s \top$ for some $\beta < \alpha$, with:
    \begin{itemize}
    \item $t_1 \leadsto_{<\beta}^s \top$ or $t_1 \leadsto_{<\beta}^s
      \bot$,
    \item for all $s' \ge s$ such that $t_1 \leadsto_{<\beta}^{s'}
      \top$ we have $t_2 \reduces_R K t_2'$ with $t_2'
      \succ_{<\beta}^{s'} \top$.
    \end{itemize}
    Of course, we have $t_1 \leadsto_{<\alpha}^s \top$ or $t_1
    \leadsto_{<\alpha}^s \top$. Suppose $s' \ge s$ and $t_1
    \leadsto_{<\alpha}^{s'} \top$. If $t_1 \leadsto_{<\beta}^s \top$,
    then $t_1 \leadsto_{<\beta}^{s'} \top$ by
    Lemma~\ref{r:lem_upward_closed}. Thus $t_2 \reduces_R K t_2'$ with
    $t_2' \succ_{<\beta}^{s'} \top$, so also $t_2'
    \succ_{<\alpha}^{s'} \top$. If $t_1 \leadsto_{<\beta}^s \bot$,
    then $t_1 \leadsto_{<\beta}^{s'} \top$ or $t_1
    \leadsto_{<\beta}^{s'} \top$ by
    Lemma~\ref{r:lem_upward_closed}. The case $t_1
    \leadsto_{<\beta}^{s'} \top$ has just been considered. So suppose
    $t_1 \leadsto_{<\beta}^{s'} \bot$. Then $t_1
    \leadsto_{<\alpha}^{s'} \bot$ which contradicts~$(\star)$.
  \item The claim is immediate for $\alpha = 0$. Suppose $t
    \succ_{\alpha}^s \top$ and $t \succ_{\alpha}^s \bot$. Then either
    $t \equiv \Xi A_\tau t_1$ or $t \equiv \Xi (K t_1) (K t_2)$.

    Assume $t \equiv \Xi A_\tau t_1$. Then, because $t \succ_\alpha^s
    \bot$, there exist $c \in \Sigma_\tau$ and $s' \ge s$ such that
    $t_1c \leadsto_{<\alpha}^{s'} \bot$. On the other hand, because
    $t \succ_\alpha^s \top$, we have $t_1c \leadsto_{<\alpha}^{s'}
    \top$. This contradicts~$(\star)$.

    If $t \equiv \Xi (K t_1) (K t_2)$ then the argument is analogous.
  \end{enumerate}
\end{proof}

It follows from Lemma~\ref{r:lem_monotone}, by a simple cardinality
argument, that there exists an ordinal~$\zeta$ such that
\mbox{$\succ_\zeta^s\; = \;\succ_{<\zeta}^s$} for all $s \in \Sc$. We
use the notations~$\succ^s$ and~$\leadsto^s$ without subscripts
for~$\succ_\zeta^s$ and~$\leadsto_\zeta^s$.

\begin{definition}\label{r:def_model_2}
  The structure~$\M$ is defined as follows. We define the extensional
  combinatory algebra~$\C$ of~$\M$ to be the set of equivalence
  classes of~$=_R$ on closed terms. We take the set~$\Sc$ of states
  of~$\N$ to be the set of states of~$\M$ as well. For $c \in \Sigma$
  we define the interpretation~$I$ of~$\M$ by $I(c) = [c^+]_R$, where
  $c^+ \in \Sigma^+$ corresponds to the element
  $\valuation{c}{\N}{}$. The function~$\varsigma_\M$ is given by
  $\varsigma_\M(d) = \{ s \in \Sc \;|\; \exists t . d = [t]_R \wedge t
  \leadsto^s \top \}$, where~$t$ is required to be closed.
\end{definition}

\begin{lemma}\label{r:lem_extensional_2}
  Let $t_1$ and $t_2$ be closed terms. If for all closed $t_3$ we have
  $t_1 t_3 =_R t_2 t_3$, then $t_1 =_R t_2$.
\end{lemma}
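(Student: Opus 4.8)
The plan is to mirror the proof of Lemma~\ref{r:lem_extensional}, the only essential difference being that here the hypothesis $t_1 t_3 =_R t_2 t_3$ is available only for \emph{closed} $t_3$, so a free variable cannot be substituted in directly as was done there. The device that plays the role of a fresh variable is a fresh \emph{external} constant. Since $\Sigma^\nu$ is infinite and $t_1$, $t_2$ contain only finitely many constants, I would first choose some $c \in \Sigma^\nu$ not occurring in $t_1$ or $t_2$. As $c$ is a closed term, the hypothesis applies and yields $t_1 c =_R t_2 c$.

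The key step is to transfer this equality from the external constant $c$ to a genuine fresh variable $x$. The point is that no reduction rule of $R$ (Definition~\ref{r:def_reductions_2}) mentions an external constant: the rules are $\beta$, $\eta$, and $c' c_1 \contr c_2$ with $c', c_1, c_2$ ranging over the canonical-type constant sets $\Sigma_\tau$, which are disjoint from $\Sigma^\nu$. Hence external constants are inert, behaving under $R$ exactly like free variables. A routine induction on the length of a reduction then shows that replacing $c$ by a fresh variable $x$ commutes with $\contr_R$: a $\beta$- or $\eta$-redex is sent to a redex of the same kind via the usual substitution-commutation identity $(r[y/s])[c/x] \equiv r[c/x][y/s[c/x]]$, while a constant-application redex $c' c_1 \contr c_2$ is left untouched because it contains no external constant. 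Consequently $u =_R v$ implies $u[c/x] =_R v[c/x]$. Applying this to $t_1 c =_R t_2 c$ and using $c \notin t_1, t_2$ gives $t_1 x =_R t_2 x$.

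From here the argument is identical to that of Lemma~\ref{r:lem_extensional}: since $t_1$ and $t_2$ are closed, $x$ is free in neither, so $\eta$-expansion together with closure of $\contr_R$ under contexts yields
\[
t_1 \leftidx{_\eta}{\from}{} \lambda x . t_1 x =_R \lambda x . t_2 x \to_\eta t_2,
\]
whence $t_1 =_R t_2$. The only genuine obstacle is the middle step, and it is conceptual rather than computational: one must notice that the restriction of $\T_\omega$ to closed terms is precisely what prevents plugging in a variable, forcing the use of the external constants $\Sigma^\nu$ as surrogate variables, and that their inertness under $R$ is exactly what makes the substitution of a variable for $c$ sound. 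The supporting substitution calculations are entirely standard.
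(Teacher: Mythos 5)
Your proof is correct and follows essentially the same route as the paper: pick a fresh external constant $\nu \in \Sigma^\nu$, use its inertness under $R$ to transfer $t_1 \nu =_R t_2 \nu$ to $t_1 x =_R t_2 x$ for a fresh variable $x$, and finish by $\eta$. The only (inessential) difference is in justifying the transfer step: the paper invokes the Church--Rosser property of $R$ to obtain a common reduct $C'[\nu]$ with $\nu$ not occurring in the contexts, whereas you show directly that replacing $\nu$ by $x$ commutes with each single reduction step, which works equally well and avoids the appeal to confluence.
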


\begin{proof}
  If $t_1 t_3 =_R t_2 t_3$ for all closed $t_3$, then in particular
  $t_1 \nu =_R t_2 \nu$ for an external constant~$\nu$ not occuring in
  $t_1$ and $t_2$. By the Church-Rosser property of $R$ there exists
  $t$ such that $t_1 \nu \reduces_R t$ and $t_2 \nu \reduces_R
  t$. Because there are no rules in $R$ involving $\nu$, and $\nu$
  cannot be produced by any of the reductions, it is easy to verify by
  induction on the number of reduction steps that $t \equiv C'[\nu]$,
  $t_1 \nu \equiv C_1[\nu]$, $t_2 \nu \equiv C_2[\nu]$, $C_1
  \reduces_R C'$ and $C_2 \reduces_R C'$, where $\nu$ does not occur
  in $C_1$, $C_2$ or $C'$. Hence $t_1 x \equiv C_1[x] =_R C_2[x]
  \equiv t_2 x$ for a variable $x$, and thus $\lambda x . t_1 x =_R
  \lambda x . t_2 x$. Because $R$ contains the rule of
  $\eta$-reduction, we conclude that $t_1 =_R t_2$.
\end{proof}

\begin{lemma}\label{r:lem_cred}
  Let~$C$ be a context and let $\rho \in \{\top, \bot\}$. If $C[\rho]
  \reduces_R t$ then there exists a context $C'$ such that $C
  \reduces_R C'$ and $t = C'[\rho]$.
\end{lemma}

\begin{proof}
  Because there are no rules in $R$ involving $\rho$, the claim is
  easy to verify by induction on the number of reduction steps.
%% NOTE: the fact that propositions cannot be arguments is used in the
%% proof
\end{proof}

The following lemma is a much simplified analogon of
Lemma~\ref{r:lem_context}.

\begin{lemma}\label{r:lem_succ_stable_2}
  If $t \succ^s \rho_1$ and $C[\rho_1] \leadsto_\alpha^s \rho_2$ then
  $C[t] \leadsto^s \rho_2$.
\end{lemma}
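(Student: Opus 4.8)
The plan is to follow the pattern of Lemma~\ref{r:lem_context}, of which this is the announced simplified analogon, but the elaborate $\gg^n$/rank bookkeeping disappears: in this section $\succ^s$ relates terms only to $\top$ and $\bot$, so every canonical value we substitute has ``rank one'' and a plain transfinite induction on $\alpha$ suffices, with no need to track a degree parameter. Moreover, because the reduction system $R$ here is fixed and does not grow with $\alpha$, the reduction part can be dispatched once and for all rather than being folded into the induction. Concretely, writing $C[\rho_1] \leadsto_\alpha^s \rho_2$ as $C[\rho_1] \reduces_R u \succ_\alpha^s \rho_2$, Lemma~\ref{r:lem_cred} yields a context $C'$ with $C \reduces_R C'$ and $u \equiv C'[\rho_1]$; since no $R$-rule mentions the box and substitution commutes with $R$-reduction, the same reduction lifts to $C[t] \reduces_R C'[t]$. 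Hence it suffices to prove the ``$\succ$-version'': if $t \succ^s \rho_1$ and $C[\rho_1] \succ_\alpha^s \rho_2$ then $C[t] \leadsto^s \rho_2$.

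I would establish this $\succ$-version simultaneously with a companion ``$\sim$-version'' — if $t \succ^s \rho_1$ and $C[\rho_1] \sim_\alpha^s \tau$ then $C[t] \sim^s \tau$ — by induction on $\alpha$, so that the two combine (through the reduction reduction above) into a usable ``$\leadsto$-version'' at all earlier stages. The $\sim$-version is easy: rules $(A)$ and $(H)$ force $C \equiv A_\tau$ or $C \equiv H$ with the box absent, while $(K\omega)$ and $(K\varepsilon)$ give $C \equiv K C_0$ with $C_0[\rho_1] \leadsto_{<\alpha}^s \top$ (resp.\ $\bot$), to which the inductive $\leadsto$-version applies at the \emph{same} state $s$, yielding $C_0[t] \leadsto^s \top$ (resp.\ $\bot$) and hence $K C_0[t] \sim^s \omega$ (resp.\ $\varepsilon$). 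For the $\succ$-version I would case on the clause producing $C[\rho_1] \succ_\alpha^s \rho_2$: the base postulates are verified by inspecting the possible shapes of $C$; in clause $(\Xi_i^\top)$ we have $C \equiv \Xi C_1 C_2$ with $C_1[\rho_1] \sim_\alpha^s \tau$ and $C_2[\rho_1]\,t_3 \leadsto_{<\alpha}^{s'} \top$ for all $s' \ge s$ and $t_3 \in \T_\tau$, so the $\sim$-version gives $C_1[t] \sim^s \tau$ and the inductive $\leadsto$-version, applied to the context $C_2[\Box]\,t_3$ (note $t_3$ is closed), gives $C_2[t]\,t_3 \leadsto^{s'} \top$, whence $\Xi C_1[t] C_2[t] \succ^s \top$ by $(\Xi_i^\top)$; the clauses $(\Xi_H^\top)$, $(H_i^\top)$ and $(\Xi^\bot)$ are handled identically, the last one also using the inductive hypothesis on $H\,C[\rho_1] \succ_{<\alpha}^s \top$ at state $s$.

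The step I expect to be the main obstacle is precisely the application of the inductive hypothesis \emph{inside the bodies} of the quantifier clauses, where the premise $C_2[\rho_1]\,t_3 \leadsto_{<\alpha}^{s'} \top$ is only supplied at states $s' \ge s$, whereas our hypothesis on $t$ lives at the single state $s$. To invoke the $\leadsto$-version at $s'$ one needs $t \succ^{s'} \rho_1$, and here the asymmetry between $\top$ and $\bot$ is decisive: for $\rho_1 = \top$ it is delivered by the upward-closedness Lemma~\ref{r:lem_upward_closed} together with Corollary~\ref{r:corollary_upward_closed} — which is exactly why those were proved beforehand — while for $\rho_1 = \bot$ the relation $\succ^s\bot$ is genuinely \emph{not} upward-closed (cf.\ Remark~\ref{r:rem_problem_hol}), so the $\bot$-case is the delicate point that must be argued from the structure of the derivation rather than by blindly raising the state. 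Keeping scrupulous track of which state each sub-derivation inhabits, and discharging every higher-state obligation through Lemma~\ref{r:lem_upward_closed}, is the real content of the proof.
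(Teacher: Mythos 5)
Your overall architecture is the same as the paper's: use Lemma~\ref{r:lem_cred} to factor out the $R$-reduction and reduce the problem to showing that $C'[\rho_1] \succ_\alpha^s \rho_2$ implies $C'[t] \succ^s \rho_2$, prove this by induction on $\alpha$ simultaneously with the companion statement for $\sim_\alpha^s$, and then case on the generating clause; your handling of the base postulates, of $(K\omega)$/$(K\varepsilon)$, and of the quantifier clauses matches the paper's proof step for step. The difference is that you stop exactly at what you yourself call ``the main obstacle'': when the box sits inside $C_2$ of a clause $(\Xi_i^\top)$ or $(\Xi_H^\top)$, the hypothesis supplies $C_2[\rho_1]\,t_3 \leadsto_{<\alpha}^{s'} \top$ at every $s' \ge s$, and invoking the inductive hypothesis there requires $t \succ^{s'} \rho_1$. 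You correctly note that Lemma~\ref{r:lem_upward_closed} delivers this for $\rho_1 \equiv \top$ but not for $\rho_1 \equiv \bot$, and then you merely assert that the $\bot$-case ``must be argued from the structure of the derivation.'' That is not an argument, and it is the entire content of the hard case, so the proposal is incomplete precisely where it matters.

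Moreover, this gap does not appear to be closable by more scrupulous bookkeeping: $t \succ^s \bot$ carries no information about the behaviour of $t$ at states above $s$ (cf.\ Remark~\ref{r:rem_problem_hol}), and a context can place the box directly under the state-universal quantification. Concretely, take $C \equiv \Xi\,H\,(K(\Xi(K\Box)(K\bot)))$, i.e.\ $\Xi\,H\,(K(\Box \supset \bot))$ up to $\beta$-reduction. Then $C[\bot] \leadsto^s \top$, because $K\bot \sim^{s'} \varepsilon$ makes the inner $\Xi$ vacuously satisfy $(\Xi_i^\top)$ at every state (as $\T_\varepsilon = \emptyset$); but if $c \in \Sigma_o$ is a propositional constant with $s \notin \varsigma_\N(c)$ and $s' \in \varsigma_\N(c)$ for some $s' > s$, then $c \succ^s \bot$ while $Kc \sim^{s'} \omega$ is the only typing available at $s'$, so $\Xi(Kc)(K\bot) \not\succ^{s'} \top$ and hence $C[c] \not\leadsto^s \top$. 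For what it is worth, the paper's own proof is no better off at this point: it applies the inductive hypothesis at $s'$ with only $t \succ^s \rho_1$ in hand and never mentions the $\bot$-case. So you have located a genuine soft spot of the argument rather than missed a standard move; but a correct write-up cannot simply promise that the state bookkeeping works out --- it would have to restrict the lemma to the hypotheses actually available at its sole point of use in Lemma~\ref{r:lem_ikm_2} (where the $(\Xi_i^\top)$ data holds at all $s'' \ge s'$), or otherwise repair the statement.
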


\begin{proof}
  Induction on $\alpha$.

  Suppose $t \succ^s \rho_1$ and $C[\rho_1] \leadsto_\alpha^s
  \rho_2$. By Lemma~\ref{r:lem_cred} we have $C \reduces_R C'$ where
  $C'[\rho_1] \succ_\alpha^s \rho_2$. It suffices to show that $C'[t]
  \succ^s \rho_2$.

  First assume $\alpha = 0$. The claim is obvious if $C'$ does not
  contain $\Box$, so assume it does. Then by inspecting the
  definitions we see that there are the following two possibilities.
  \begin{itemize}
  \item If $C' \equiv \Box$ and $\rho_1 \equiv \rho_2$ then the claim
    is obvious.
  \item If $C' \equiv H \Box$ and $\rho_2 \equiv \top$, then either $t
    \succ \top$ or $t \succ \bot$. Thus $H t \succ \top$ by
    condition~$(H_\top)$.
  \end{itemize}

  Now let $\alpha > 0$. If $C' \equiv \Xi A_\tau C_1$ and $\rho_2 =
  \top$ then for all $c \in \Sigma_\tau$ and all $s' \ge s$ we have
  $C_1[\rho_1] c \leadsto_{<\alpha}^{s'} \top$. We conclude by the
  inductive hypothesis that for all $c \in \Sigma_\tau$ and all $s'
  \ge s$ we have $C_1[t] c \leadsto^{s'} \top$. Hence $C'[t] \succ^s
  \top$.

  If $C' \equiv \Xi (K C_1) C_2$ and $\rho_2 = \top$ then
  \begin{itemize}
  \item $C_1[\rho_1] \leadsto_{<\alpha}^s \top$ or $C_1[\rho_1]
    \leadsto_{<\alpha}^s \bot$, and
  \item for all $s' \ge s$ such that $C_1[\rho_1]
    \leadsto_{<\alpha}^{s'} \top$ we have $C_2 \reduces_R K C_2'$ with
    $C_2'[\rho_1] \succ_{<\alpha}^{s'} \top$.
  \end{itemize}
  By Lemma~\ref{r:lem_upward_closed} for all $s' \ge s$ we have:
  \begin{itemize}
  \item[$(\star)$] $C_1[\rho_1] \leadsto_{<\alpha}^{s'} \top$ or
    $C_1[\rho_1] \leadsto_{<\alpha}^{s'} \bot$.
  \end{itemize}
  By the inductive hypothesis $C_1[t] \leadsto^s \top$ or $C_1[t]
  \leadsto^s \bot$. Let $s' \ge s$ be such that $C_1[t] \leadsto^{s'}
  \top$. By~$(\star)$ we have $C_1[\rho_1] \leadsto_{<\alpha}^{s'}
  \top$, because if $C_1[\rho_1] \leadsto_{<\alpha}^{s'} \bot$ then
  $C_1[t] \leadsto^{s'} \bot$ by the inductive hypothesis, which
  contradicts $C_1[t] \leadsto^{s'} \top$ by part~2 of
  Lemma~\ref{r:lem_monotone}. Hence, $C_2 \reduces_R K C_2'$ with
  $C_2'[\rho_1] \succ_{<\alpha}^{s'} \top$, and by the inductive
  hypothesis $C_2'[t] \leadsto_{<\alpha}^{s'} \top$. This implies
  $C'[t] \succ_\alpha^s \top$.

  In all other cases the proof is similar.
\end{proof}

This finishes the more difficult part of the construction correctness
proof. As in Section~\ref{r:sec_construction} it remains to prove
several simple lemmas implying that $\M$ satisfies the conditions
imposed on an illative Kripke model for $\I_0$. For convenience we
reformulate the definition of an illative Kripke model for~$\I_0$ in
terms of the notions used to construct $\M$.

\begin{fact}\label{r:fact_illat_model}
  If the following conditions hold, then $\M$ is an illative Kripke
  model for~$\I_0$.
  \begin{enumerate}
  \item If $t_1 =_R t_2$ then $t_1 \leadsto^s \top$ is equivalent to
    $t_2 \leadsto^s \top$. \label{r:mikm_1}
  \item If $t \leadsto^s \top$ then $t \leadsto^{s'} \top$ for all $s'
    \ge s$. \label{r:mikm_2}
  \item If for all $t_3$ we have $t_1 t_3 =_R t_2 t_3$ then $t_1 =_R
    t_2$. \label{r:mikm_3}
  \item If $L t_1 \leadsto^s \top$ and for all $s' \ge s$ and all
    $t_3$ such that $t_1 t_3 \leadsto^{s'} \top$ we have $t_2 t_3
    \leadsto^{s'} \top$, then $\Xi t_1 t_2 \leadsto^s
    \top$. \label{r:mikm_4}
    \vspace{-1.2em}
  \item If $\Xi t_1 t_2 \leadsto^s \top$ then for all $t_3$ such that
    $t_1 t_3 \leadsto^s \top$ we have $t_2 t_3 \leadsto^s
    \top$. \label{r:mikm_5}
  \item If $L t_1 \leadsto^s \top$ and for all $s' \ge s$ and all
    $t_3$ such that $t_1 t_3 \leadsto^{s'} \top$ we have $H (t_2 t_3)
    \leadsto^{s'} \top$, then $H (\Xi t_1 t_2) \leadsto^s
    \top$. \label{r:mikm_6}
  \item If $t \leadsto^s \top$ then $H t \leadsto^s
    \top$. \label{r:mikm_7}
  \item $L H \leadsto^s \top$, \label{r:mikm_8}
  \item $L A_\tau \leadsto^s \top$ for $\tau \in \B$. \label{r:mikm_9}
  \end{enumerate}
\end{fact}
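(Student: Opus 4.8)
The plan is to show that the structure $\M$ of Definition~\ref{r:def_model_2} meets every requirement of an illative Kripke model for $\I_0$ as laid out in Definition~\ref{r:def_ikm}, reading each requirement off the hypotheses~(\ref{r:mikm_1})--(\ref{r:mikm_9}). The decisive bookkeeping fact, which I would record at the very start, is that for any closed term $t$ and any state $s$ we have $s \in \varsigma_\M([t]_R)$ iff $t \leadsto^s \top$. This is immediate from hypothesis~(\ref{r:mikm_1}): by definition $s \in \varsigma_\M([t]_R)$ means that some representative $t'$ with $t' =_R t$ satisfies $t' \leadsto^s \top$, and~(\ref{r:mikm_1}) guarantees that whether $t' \leadsto^s \top$ holds is independent of the chosen representative, so the existential collapses to the condition on $t$ itself.

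First I would verify the pre-model structure. That $\C$ is a combinatory algebra is immediate, since the rules of $\beta$-reduction belong to $R$: taking $S$ and $K$ to be the usual lambda-terms we get $S\,X\,Y\,Z =_R (X Z)(Y Z)$ and $K\,X\,Y =_R X$ for all $X,Y,Z \in \C$, where application in $\C$ is $[t]_R \cdot [t']_R = [t\,t']_R$. Extensionality is exactly hypothesis~(\ref{r:mikm_3}) (which, in the concrete construction, will follow from the same $\eta$-expansion argument as in Lemma~\ref{r:lem_extensional}). The interpretation $I(c) = [c^+]_R$ clearly lands in $\C$, and $\le$ is a partial order because it is inherited from the Kripke model $\N$. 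Finally, $\varsigma_\M$ is well-defined on equivalence classes by~(\ref{r:mikm_1}), and it assigns upward-closed subsets of $\Sc$ by~(\ref{r:mikm_2}): if $s \in \varsigma_\M(d)$ with witness $t \leadsto^s \top$ and $s' \ge s$, then~(\ref{r:mikm_2}) gives $t \leadsto^{s'} \top$, whence $s' \in \varsigma_\M(d)$.

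It then remains to check conditions~(\ref{r:ikm_1})--(\ref{r:ikm_3}) and~(\ref{r:ikm_5})--(\ref{r:ikm_7}) of Definition~\ref{r:def_ikm} (condition~(\ref{r:ikm_4}) is dropped for $\I_0$). Here the argument is a direct translation. Writing $X = [t_1]_R$, $Y = [t_2]_R$, $Z = [t_3]_R$ and using the opening observation together with the identifications $L X = [L t_1]_R$, $X Z = [t_1 t_3]_R$, $\Xi X Y = [\Xi t_1 t_2]_R$, and $H X = [H t_1]_R$, each abstract condition becomes a statement about $\leadsto^s$. Concretely, condition~(\ref{r:ikm_1}) becomes hypothesis~(\ref{r:mikm_4}), condition~(\ref{r:ikm_2}) becomes~(\ref{r:mikm_5}), condition~(\ref{r:ikm_3}) becomes~(\ref{r:mikm_6}), condition~(\ref{r:ikm_5}) becomes~(\ref{r:mikm_7}), and conditions~(\ref{r:ikm_6}),~(\ref{r:ikm_7}) become~(\ref{r:mikm_8}),~(\ref{r:mikm_9}); the universal quantifier over $Z \in \C$ matches the quantifier over closed $t_3$ because every element of $\C$ has a closed representative.

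The step requiring the most care is making this quantifier correspondence exact, namely confirming that ranging over $Z \in \C$ (and over states $s' \ge s$) in Definition~\ref{r:def_ikm} coincides with ranging over closed terms $t_3$ modulo $=_R$ in the hypotheses; this again rests on well-definedness~(\ref{r:mikm_1}), so that $s' \in \varsigma_\M(X Z)$ depends only on the class $[t_1 t_3]_R$ and not on the representative. Beyond this there is no genuine obstacle: the present Fact is purely a reformulation of the model axioms into the term-reduction language of $\leadsto^s$, and the substantive work of actually establishing~(\ref{r:mikm_1})--(\ref{r:mikm_9}) for the concrete construction is deferred to the lemmas that follow.
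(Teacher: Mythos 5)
Your proposal is correct and follows essentially the same route as the paper's own proof: hypothesis~(\ref{r:mikm_1}) gives that $s \in \varsigma_\M([t]_R)$ is equivalent to $t \leadsto^s \top$, (\ref{r:mikm_2}) gives upward-closure, (\ref{r:mikm_3}) gives extensionality, and the remaining hypotheses are read off as reformulations of the conditions on $\varsigma$ in Definition~\ref{r:def_ikm}. You merely spell out the representative-independence and quantifier-matching details that the paper leaves implicit.
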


\begin{proof}
  Condition~(\ref{r:mikm_1}) ensures that $s \in \varsigma_\M([t]_R)$ is
  equivalent to $t \leadsto^s \top$. Condition~(\ref{r:mikm_2})
  implies that for any $d \in \M$ the set $\varsigma_\M(d)$ is
  upward-closed. Condition~(\ref{r:mikm_3}) implies that the
  combinatory algebra of~$\M$ is extensional. The remaining conditions
  are a reformulation of the conditions imposed on~$\varsigma$ in an
  illative Kripke model for~$\I_0$.
\end{proof}

\begin{lemma}\label{r:lem_h_t_f_2}
  $H t \leadsto^s \top$ iff $t \leadsto^s \top$ or $t \leadsto^s
  \bot$.
\end{lemma}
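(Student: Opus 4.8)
The plan is to follow the proof of the analogous Lemma~\ref{r:lem_h_t_f} closely, exploiting that the construction of Section~\ref{r:sec_embedding} has no $F_L$ rule and hence no $(F_L^\top)$ case, while carrying the state parameter $s$ (and auxiliary states $s' \ge s$) through the argument. First I would reduce the statement to a claim about $\succ$: recalling the convention $H t = L(K t)$ and using Lemma~\ref{r:lem_xi_unique} (which holds verbatim here, together with its stated consequence that $H t \reduces_R u$ forces $u \equiv H t'$ with $t \reduces_R t'$), from $H t \leadsto_\alpha^s \top$ I obtain $H t \reduces_R H t' \succ_\alpha^s \top$ with $t \reduces_R t'$. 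Since $\leadsto_\alpha^s$ is preserved under prefixing $R$-reductions, it suffices to prove: for every term $t$, if $H t \succ_\alpha^s \top$ then $t \leadsto_\alpha^s \top$ or $t \leadsto_{\alpha+1}^s \bot$.

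I would then argue by induction on $\alpha$. For the base case $\alpha = 0$, inspecting the five postulates for $\succ_0^s \top$ in Definition~\ref{r:def_sim_succ_2}, the only one whose classified term can take the form $H t = L(K t)$ is the clause $t \equiv H c$ with $c \in \Sigma_o$; this forces $t \equiv c \in \Sigma_o$, and then $c \succ_0^s \top$ or $c \succ_0^s \bot$ according to whether $s \in \varsigma_\N(c)$, so $t \leadsto_0^s \top$, or $t \leadsto_0^s \bot$ and hence $t \leadsto_1^s \bot$. For $\alpha > 0$, if $H t \succ_0^s \top$ I fall back on the base case; otherwise one of $(\Xi_i^\top)$, $(\Xi_H^\top)$, $(H_i^\top)$ must apply to $H t$. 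The form $\Xi t_1 t_2$ cannot match $L(K t)$, so $(\Xi_i^\top)$ is vacuous, while $(H_i^\top)$ gives $t \equiv t_1$ with $t_1 \leadsto_{<\alpha}^s \top$, hence $t \leadsto_\alpha^s \top$.

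The main case, and the only real work, is $(\Xi_H^\top)$: here $t \equiv \Xi t_1 t_2$, and there is a type $\tau$ with $t_1 \sim_\alpha^s \tau$ such that, for all $s' \ge s$ and all $t_3 \in \T_\tau$, we have $H(t_2 t_3) \leadsto_{<\alpha}^{s'} \top$. Applying the inductive hypothesis at each such $s'$ and $t_3$ yields $t_2 t_3 \leadsto_{<\alpha}^{s'} \top$ or $t_2 t_3 \leadsto_\alpha^{s'} \bot$. If the first alternative holds for all $s' \ge s$ and all $t_3 \in \T_\tau$, then $(\Xi_i^\top)$ is satisfied for $\Xi t_1 t_2$ and $t \succ_\alpha^s \top$. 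Otherwise there are $s' \ge s$ and $t_3 \in \T_\tau$ with $t_2 t_3 \leadsto_\alpha^{s'} \bot$, and I invoke the second $\bot$-postulate of Definition~\ref{r:def_sim_succ_2} (the one for terms $\Xi t_1 t_2$) at stage $\alpha + 1$, verifying its preconditions: $t \equiv \Xi t_1 t_2$; $H t \succ_{<\alpha+1}^s \top$ holds because $H t \succ_\alpha^s \top$ by hypothesis; and by monotonicity of $\sim$ we get $t_1 \sim_{\alpha+1}^s \tau$, while $t_2 t_3 \leadsto_{<\alpha+1}^{s'} \bot$. This gives $t \succ_{\alpha+1}^s \bot$, hence $t \leadsto_{\alpha+1}^s \bot$.

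The delicate points, and where the state-indexing makes the argument marginally more involved than its Section~\ref{r:sec_construction} counterpart, are the ordinal bookkeeping (from $\beta < \alpha$ one has $\beta + 1 \le \alpha$, so the $\bot$-alternative supplied by the inductive hypothesis lands at stage at most $\alpha$ and thus feeds the stage-$(\alpha+1)$ postulate) and the correct threading of the auxiliary state $s'$, so that the existential witness $(s', t_3)$ demanded by the $\bot$-postulate is exactly the one produced by the failure of the universal $\top$-alternative. I expect this $(\Xi_H^\top)$ case to be the main obstacle; the remaining conditions required for $\M$ will be dispatched by the subsequent lemmas in the same style as in Section~\ref{r:sec_construction}.
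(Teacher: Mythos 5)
Your proposal is correct and follows essentially the same route as the paper, which proves this lemma by declaring it ``completely analogous'' to Lemma~\ref{r:lem_h_t_f}: the same reduction to a claim about $\succ_\alpha^s$, the same induction on $\alpha$ with the $(\Xi_H^\top)$ case split according to whether the universal $\top$-alternative holds, and the same use of the $\Xi$-form $\bot$-postulate at stage $\alpha+1$. Your explicit handling of the base case (where $\Sigma_o$ mirrors $\D_o^\N$ rather than being $\{\top,\bot\}$), the absence of the $(F_L^\top)$ case, and the threading of the auxiliary state $s'$ are all accurate adaptations of the Section~\ref{r:sec_construction} argument.
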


\begin{proof}
  Follows directly from definitions.
\end{proof}

\begin{lemma}\label{r:lem_l}
  If $L t \leadsto^s \top$ then exactly one of the following
  holds:
  \begin{itemize}
  \item $t \reduces_R A_{\tau}$ for some $\tau \in \B$,
  \item $t \reduces_R H$,
  \item $t \reduces_R K t'$ with $t' \leadsto^s \top$ or $t'
    \leadsto^s \bot$.
  \end{itemize}
\end{lemma}

\begin{proof}
  Easy inspection of the rules in the definition of
  $\succ_\alpha^s$. That the conditions are exclusive is a consequence
  of the Church-Rosser property of~$R$.
\end{proof}

\begin{lemma}\label{r:lem_ikm_1_3}
  The following conditions are satisfied.
  \begin{itemize}
  \item If $L t_1 \leadsto^s \top$ and for all $s' \ge s$ and all
    $t_3$ such that $t_1 t_3 \leadsto^{s'} \top$ we have $t_2 t_3
    \leadsto^{s'} \top$, then $\Xi t_1 t_2 \leadsto^s \top$.
  \item If $L t_1 \leadsto^s \top$ and for all $s' \ge s$ and all
    $t_3$ such that $t_1 t_3 \leadsto^{s'} \top$ we have $H (t_2 t_3)
    \leadsto^{s'} \top$, then $H (\Xi t_1 t_2) \leadsto^s \top$.
  \end{itemize}
\end{lemma}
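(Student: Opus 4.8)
The plan is to mirror the proof of Lemma~\ref{r:lem_cikm_1_3_4} almost verbatim, threading the Kripke state through each step and invoking the state-monotonicity Corollary~\ref{r:corollary_upward_closed} at exactly the point where the single-state argument needed nothing of the kind. Both items reduce to verifying, at the base state $s$, one of the postulates $(\Xi_i^\top)$, $(\Xi_H^\top)$ for the term in question and then transporting the conclusion back along an $R$-reduction via Corollary~\ref{r:corollary_t_conv_2}.

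For the first item I would start from $L t_1 \leadsto^s \top$ and apply Lemma~\ref{r:lem_l_sim_2} to obtain a reduct $t_1 \reduces_R t_1'$ together with a type $\tau$ such that $t_1' \sim^s \tau$. Since $\Xi t_1 t_2 =_R \Xi t_1' t_2$, Corollary~\ref{r:corollary_t_conv_2} reduces the goal to showing $\Xi t_1' t_2 \leadsto^s \top$, for which it suffices to check condition~$(\Xi_i^\top)$ for $\Xi t_1' t_2$ at state $s$. This requires two things: $t_1' \sim^s \tau$, which we already have, and that $t_2 t_0 \leadsto^{s'} \top$ for every $s' \ge s$ and every $t_0 \in \T_\tau$.

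For the latter, fix $s' \ge s$ and $t_0 \in \T_\tau$. From $t_1' \sim^s \tau$ Lemma~\ref{r:lem_sim_correct_2} gives $t_1' t_0 \leadsto^s \top$, and Corollary~\ref{r:corollary_upward_closed} lifts this to $t_1' t_0 \leadsto^{s'} \top$. Since $t_1 \reduces_R t_1'$ we have $t_1 t_0 =_R t_1' t_0$, so Corollary~\ref{r:corollary_t_conv_2} yields $t_1 t_0 \leadsto^{s'} \top$. The hypothesis of the lemma, applied at the state $s'$ with $t_3 = t_0$, then delivers $t_2 t_0 \leadsto^{s'} \top$, as wanted. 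This establishes $(\Xi_i^\top)$, hence $\Xi t_1' t_2 \succ^s \top$ at the closing ordinal, and we conclude as above.

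The second item is proved identically, using $(\Xi_H^\top)$ in place of $(\Xi_i^\top)$ and replacing $t_2 t_3 \leadsto^{s'} \top$ by $H(t_2 t_3) \leadsto^{s'} \top$ throughout. The only genuinely new ingredient compared with the single-state argument of Lemma~\ref{r:lem_cikm_1_3_4} is the appeal to Corollary~\ref{r:corollary_upward_closed} to pass from $s$ to an arbitrary $s' \ge s$; I expect this state-monotonicity step to be the one place demanding attention, since the postulates quantify over all $s' \ge s$. Given the corollary, however, the step is immediate, so I anticipate no real obstacle and the proof can be compressed to a reference back to the pattern of Lemma~\ref{r:lem_cikm_1_3_4}.
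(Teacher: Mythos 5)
Your proposal is correct and follows essentially the same route as the paper: Lemma~\ref{r:lem_l_sim_2} to extract $t_1 \reduces_R t_1' \sim^s \tau$, verification of $(\Xi_i^\top)$ (resp.\ $(\Xi_H^\top)$) via Lemma~\ref{r:lem_sim_correct_2}, Corollary~\ref{r:corollary_upward_closed} and the hypothesis at $s'$, and transport back along $=_R$ by Corollary~\ref{r:corollary_t_conv_2}. The only (immaterial) difference is that you apply state-monotonicity to $t_1' t_0 \leadsto^s \top$ rather than to $t_1' \sim^s \tau$ as the paper does, which if anything matches the literal statement of Corollary~\ref{r:corollary_upward_closed} more directly.
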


\begin{proof}
  Suppose $L t_1 \leadsto^s \top$ and for all $s' \ge s$ and all $t_3$
  such that $t_1 t_3 \leadsto^{s'} \top$ we have $t_2 t_3
  \leadsto^{s'} \top$. We consider possible cases according to
  Lemma~\ref{r:lem_l}.
  \begin{itemize}
  \item $t_1 \reduces_R A_{\tau}$ for $\tau \in \B \cup \{o\}$. If $c
    \in \Sigma_\tau$ and $s' \ge s$ then $A_\tau c \succ^{s'} \top$,
    so also $t_1 c \leadsto^{s'} \top$ by
    Corollary~\ref{r:corollary_t_conv_2}, and thus $t_2 c
    \leadsto^{s'} \top$. Hence $\Xi A_\tau t_2 \succ^s \top$
    by~$(\Xi_\top)$. Therefore, $\Xi t_1 t_2 \leadsto^s \top$.
  \item $t_1 \reduces_R K t_1'$ with $t_1' \leadsto^s \top$ or $t_1'
    \leadsto^s \bot$. Let $s' \ge s$ be such that $t_1' \leadsto^{s'}
    \top$. Then $t_1 t_3 \leadsto^{s'} \top$ for arbitrary
    closed~$t_3$, so $t_2 t_3 \leadsto^{s'} \top$ for any
    closed~$t_3$, in particular for $t_3 \equiv \nu$ an external
    constant not occuring in~$t_2$. We have $t_2 \nu \reduces_R t_2'
    \succ^{s'} \top$. It is easy to see by inspecting the definitions
    that~$\nu$ cannot occur in~$t_2'$. Thus we also have $t_2 x
    \reduces_R t_2'$. Therefore $t_2 \from_\eta \lambda x . t_2 x
    \reduces_R K t_2'$. So if there exists $s' \ge s$ such that $t_1'
    \leadsto^{s'} \top$ then $t_2 =_R K t_2'$, and for every such $s'
    \ge s$ we have $t_2' \succ^{s'} \top$. Thus $\Xi (K t_1') (K t_2')
    \succ^{s} \top$, so $\Xi t_1 t_2 \leadsto^s \top$, by
    Corollary~\ref{r:corollary_t_conv_2}. If there does not exist $s'
    \ge s$ such that $t_1' \leadsto^{s'} \top$, then also $\Xi t_1 t_2
    \leadsto^s \top$.
  \end{itemize}

  The second claim is verified in a similar manner using
  Lemma~\ref{r:lem_l}, Lemma~\ref{r:lem_h_t_f_2},
  Corollary~\ref{r:corollary_upward_closed} and
  Corollary~\ref{r:corollary_t_conv_2}.
\end{proof}

\begin{lemma}\label{r:lem_ikm_2}
  If $\Xi t_1 t_2 \leadsto^s \top$ then for all $s' \ge s$ and all
  terms~$t_3$ such that $t_1 t_3 \leadsto^{s'} \top$ we have $t_2 t_3
  \leadsto^{s'} \top$.
\end{lemma}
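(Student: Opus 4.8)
The plan is to mirror the proof of Lemma~\ref{r:lem_cikm_2}, adapting it to the state-indexed relations. First I would put the hypothesis into canonical form using Lemma~\ref{r:lem_xi_unique}: since $\Xi t_1 t_2 \leadsto^s \top$, there are $t_1', t_2'$ with $t_1 \reduces_R t_1'$, $t_2 \reduces_R t_2'$ and $\Xi t_1' t_2' \succ^s \top$. Inspecting Definition~\ref{r:def_sim_succ_2}, the only postulate that can yield $\Xi t_1' t_2' \succ^s \top$ is $(\Xi_i^\top)$, so there is a type $\tau$ with $t_1' \sim^s \tau$ such that, for all $s'' \ge s$ and all $t_3 \in \T_\tau$, we have $t_2' t_3 \leadsto^{s''} \top$.

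Now I would fix $s' \ge s$ and a term $t_3$ with $t_1 t_3 \leadsto^{s'} \top$. By Corollary~\ref{r:corollary_t_conv_2} (using $t_1 t_3 =_R t_1' t_3$ and $t_2 t_3 =_R t_2' t_3$) it suffices to derive $t_2' t_3 \leadsto^{s'} \top$ from $t_1' t_3 \leadsto^{s'} \top$. The key idea is to replace $t_3$ by a canonical representative of type~$\tau$: I would first produce $\rho \in \T_\tau$ with $t_3 \leadsto^{s'} \rho$, say $t_3 \reduces_R t_3^\ast \succ^{s'} \rho$, then instantiate the $(\Xi_i^\top)$ data at $s'' = s'$ and $t_3 := \rho$ to get $t_2' \rho \leadsto^{s'} \top$, and finally transfer this back along the context $C \equiv t_2' \Box$: Lemma~\ref{r:lem_succ_stable_2} gives $t_2' t_3^\ast \leadsto^{s'} \top$, whence $t_2' t_3 \leadsto^{s'} \top$ by Corollary~\ref{r:corollary_t_conv_2}.

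The existence of such a $\rho$ is settled by a case analysis on $\tau$, whose only possible values under the rules for $\sim^s$ are a base type, $o$, $\omega$, or $\varepsilon$. If $\tau = \omega$, then $t_3 \in \T_\omega$ already and the claim is immediate. If $\tau \in \B$, then $t_1' \equiv A_\tau$, and since there is no head reduction rule for $A_\tau$ (cf.\ Lemma~\ref{r:lem_xi_unique}), the hypothesis $A_\tau t_3 \leadsto^{s'} \top$ can only arise from the postulate $A_\tau c \succ^{s'} \top$ with $c \in \Sigma_\tau = \T_\tau$; hence $t_3 \leadsto^{s'} c$. If $\tau = o$, then $t_1' \equiv H$, and $H t_3 \leadsto^{s'} \top$ yields, by Lemma~\ref{r:lem_h_t_f_2}, either $t_3 \leadsto^{s'} \top$ or $t_3 \leadsto^{s'} \bot$, and both $\top$ and $\bot$ lie in $\T_o$.

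The remaining case $\tau = \varepsilon$ is where I expect the main difficulty. Here $\T_\varepsilon = \emptyset$, so there is no representative and the $(\Xi_i^\top)$ data is vacuous; the only viable route is to show that the hypothesis $t_1' t_3 \leadsto^{s'} \top$ cannot hold, making the conclusion vacuously true. Since $t_1' \sim^s \varepsilon$ can come only from rule~$(K\varepsilon)$, we have $t_1' \equiv K t''$ with $t'' \leadsto^s \bot$, and $t_1' t_3 =_R t''$ forces $t'' \leadsto^{s'} \top$. Reducing $t''$ to a common form by the Church--Rosser property of $R$ and propagating both $\succ^s \bot$ and $\succ^{s'} \top$ along that reduction via Lemma~\ref{r:lem_succ_sim_red}, one obtains a single term that is simultaneously $\succ^s \bot$ and $\succ^{s'} \top$. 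The delicate point is that $\succ^s \bot$ is \emph{not} upward-closed in the state (Remark~\ref{r:rem_problem_hol}), so one cannot simply invoke monotonicity; the contradiction with Lemma~\ref{r:lem_t_f_disjoint} has to be extracted by a careful comparison across the states $s \le s'$. This cross-state consistency argument for the empty type is the crux of the proof.
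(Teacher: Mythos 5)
Most of your proposal coincides with the paper's proof: reduce to $\Xi t_1' t_2' \succ^s \top$, observe that only $(\Xi_i^\top)$ can apply, and split on the type $\tau$ with $t_1' \sim^s \tau$, treating $\omega$ trivially, $\tau \in \B$ via the shape of $A_\tau t_3 \succ^{s'} \top$, and $\tau = o$ via Lemma~\ref{r:lem_h_t_f_2} followed by Lemma~\ref{r:lem_succ_stable_2}. Those three cases are correct and essentially verbatim the paper's argument (a cosmetic point: in this section $\succ^{s'}$ relates terms only to $\top$ and $\bot$, so in the base-type case one gets $t_3 \reduces_R c$ literally and Corollary~\ref{r:corollary_t_conv_2} already finishes; Lemma~\ref{r:lem_succ_stable_2} is only needed for $\tau = o$).

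The genuine gap is the case $\tau = \varepsilon$, which you explicitly leave unresolved, and the route you sketch for it cannot be completed. You reduce it to a common reduct of $t''$ satisfying $\succ^s \bot$ and $\succ^{s'} \top$ simultaneously and hope to extract a contradiction ``by a careful comparison across the states.'' But that pair of conditions is consistent when $s < s'$: take $t'' \equiv c$ for a propositional constant $c \in \D_o^\N$ with $s \notin \varsigma_\N(c)$ and $s' \in \varsigma_\N(c)$ (such elements exist in any Kripke model where some proposition becomes true only at a later state); then $c \succ_0^s \bot$ and $c \succ_0^{s'} \top$ hold directly by the first postulates of Definition~\ref{r:def_sim_succ_2}, and Lemma~\ref{r:lem_t_f_disjoint} is irreducibly a same-state statement. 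So no cross-state comparison will yield the contradiction you need. The paper instead runs the entire $\varepsilon$-argument at the single state $s'$: from $t_1' \sim^{s'} \varepsilon$ it gets $t'' \leadsto^{s'} \bot$, which clashes with $t'' \leadsto^{s'} \top$ at $s'$ itself via Lemma~\ref{r:lem_t_f_disjoint}. Note that for the instance actually needed to verify condition~(\ref{r:mikm_5}) of Fact~\ref{r:fact_illat_model} one has $s' = s$, where $t_1' \sim^{s'} \varepsilon$ is immediate; your observation that $\succ^s \bot$ is not upward-closed in $s$ is a legitimate worry about the strengthened $s' > s$ formulation, but as submitted your proof of this case is absent rather than merely delicate.
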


\begin{proof}
  Suppose $\Xi t_1 t_2 \leadsto^s \top$. Then $\Xi t_1 t_2 \reduces_R
  \Xi t_1' t_2' \succ^s \top$ with $t_i \reduces_R t_i'$. There are
  three cases.
  \begin{itemize}
  \item $t_1' \equiv A_\tau$ where $\tau \in \B$ and for all $s' \ge
    a$ and all $c \in \Sigma_\tau$ we have $t_2' c \leadsto^{s'}
    \top$. Assume $s' \ge s$ and $t_1 t_3 \leadsto^{s'} \top$. Then
    also $A_\tau t_3 \leadsto^{s'} \top$ by
    Corollary~\ref{r:corollary_t_conv_2}. This is only possible when
    $t_3 \in \Sigma_\tau$. This implies $t_2' t_3 \leadsto^{s'} \top$,
    so also $t_2 t_3 \leadsto^{s'} \top$ because $t_2 \reduces_R
    t_2'$.
  \item $t_1' \equiv H$ and for all $s' \ge s$ and all $\rho \in
    \{\top,\bot\} \subseteq \Sigma_\tau$ we have $t_2' \rho
    \leadsto^{s'} \top$. Assume $s' \ge s$ and $t_1 t_3 \leadsto^{s'}
    \top$. Then also $H t_3 \leadsto^{s'} \top$ by
    Corollary~\ref{r:corollary_t_conv_2}. By Lemma~\ref{r:lem_h_t_f_2}
    either $t_3 \leadsto^{s'} \top$ or $t_3 \leadsto^{s'} \bot$. In
    any case, we may use Lemma~\ref{r:lem_succ_stable_2} to conclude
    $t_2 t_3 \leadsto^{s'} \top$.
  \item $t_1' \equiv K t_1''$ and for all $s' \ge s$ such that $t_1''
    \leadsto^{s'} \top$ we have $t_2' \reduces_R K t_2''$ with $t_2''
    \succ^{s'} \top$. Assume $s' \ge s$ and $t_1 t_2 \leadsto^{s'}
    \top$. Then $t_1'' \leadsto^{s'} \top$ by
    Corollary~\ref{r:corollary_t_conv_2}. So also $t_2 t_3
    \leadsto^{s'} \top$ by Corollary~\ref{r:corollary_t_conv_2},
    because $t_2 \reduces_R K t_2''$ with $t_2'' \succ^{s'} \top$.
  \end{itemize}
\end{proof}

\begin{corollary}
  The structure $\M$ constructed in Definition~\ref{r:def_model_2} is
  an illative Kripke model for $\I_0$.
\end{corollary}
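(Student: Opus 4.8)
The plan is to reduce the corollary to Fact~\ref{r:fact_illat_model}, which already packages the definition of an illative Kripke model for~$\I_0$ into nine numbered conditions on the relation~$\leadsto^s$. Each of these conditions has, in effect, been isolated as one of the preceding lemmas of this subsection, so the proof consists of matching them up and then invoking the fact.

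First I would dispose of the three conditions concerning the combinatory algebra and the Kripke frame. Condition~(\ref{r:mikm_1}), that $=_R$ respects $\leadsto^s\top$, is exactly Corollary~\ref{r:corollary_t_conv_2}. Condition~(\ref{r:mikm_2}), upward closure of truth along~$\le$, is Corollary~\ref{r:corollary_upward_closed}. Condition~(\ref{r:mikm_3}), extensionality of~$\C$, is Lemma~\ref{r:lem_extensional_2}. Next come the three conditions that are immediate from the definition of~$\succ^s$. For condition~(\ref{r:mikm_7}), if $t \leadsto^s \top$ then, since $\zeta$ is the closure ordinal, $t \leadsto_{<\zeta}^s \top$, so the postulate~$(H_i^\top)$ in Definition~\ref{r:def_sim_succ_2} gives $H t \succ^s \top$ and hence $H t \leadsto^s \top$. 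Conditions~(\ref{r:mikm_8}) and~(\ref{r:mikm_9}) hold because $L H$ and $L A_\tau$ (for $\tau \in \B$) are among the closed terms for which $\succ_\alpha^s \top$ is postulated outright, for every $\alpha \ge 0$, in Definition~\ref{r:def_sim_succ_2}.

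The substantive conditions are~(\ref{r:mikm_4}), (\ref{r:mikm_5}) and~(\ref{r:mikm_6}). Conditions~(\ref{r:mikm_4}) and~(\ref{r:mikm_6}) are precisely the two clauses of Lemma~\ref{r:lem_ikm_1_3}, and condition~(\ref{r:mikm_5}) is the $s' = s$ instance of Lemma~\ref{r:lem_ikm_2}; since the latter in fact proves the stronger statement quantified over all $s' \ge s$, the weaker condition~(\ref{r:mikm_5}) follows a fortiori. With all nine conditions verified, Fact~\ref{r:fact_illat_model} yields that $\M$ is an illative Kripke model for~$\I_0$.

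I do not expect any real obstacle at this stage: the corollary is bookkeeping, and every genuine difficulty has already been discharged in the supporting lemmas --- chiefly Lemma~\ref{r:lem_ikm_1_3} and Lemma~\ref{r:lem_ikm_2}, which themselves rest on the typing-correctness results (Lemma~\ref{r:lem_l_sim_2}, Lemma~\ref{r:lem_sim_correct_2}, Lemma~\ref{r:lem_succ_stable_2}) and on the disjointness of truth values (Lemma~\ref{r:lem_t_f_disjoint}). The one point deserving a moment's care is to confirm that the quantifier prefixes ``for all $s' \ge s$'' in conditions~(\ref{r:mikm_4})--(\ref{r:mikm_6}) match the hypotheses of the cited lemmas exactly, and that restricting to $s' = s$ indeed suffices for condition~(\ref{r:mikm_5}).
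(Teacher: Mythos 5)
Your proposal is correct and matches the paper's proof essentially verbatim: both reduce the corollary to the nine conditions of Fact~\ref{r:fact_illat_model} and discharge them with the same lemmas (Corollary~\ref{r:corollary_t_conv_2}, Corollary~\ref{r:corollary_upward_closed}, Lemma~\ref{r:lem_extensional_2}, Lemma~\ref{r:lem_ikm_1_3}, Lemma~\ref{r:lem_ikm_2}, and the definitional postulates for the last three). Your extra remarks on why conditions~(\ref{r:mikm_7})--(\ref{r:mikm_9}) are immediate and on the $s'=s$ instantiation for condition~(\ref{r:mikm_5}) are accurate but not needed beyond what the paper states.
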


\begin{proof}
  It suffices to check the conditions of
  Fact~\ref{r:fact_illat_model}. Condition~(\ref{r:mikm_1}) follows
  from
  Corollary~\ref{r:corollary_t_conv_2}. Condition~(\ref{r:mikm_2}) is
  a consequence of
  Corollary~\ref{r:corollary_upward_closed}. Condition~(\ref{r:mikm_3})
  follows from
  Lemma~\ref{r:lem_extensional_2}. Conditions~(\ref{r:mikm_4})
  and~(\ref{r:mikm_6}) follow from
  Lemma~\ref{r:lem_ikm_1_3}. Lemma~\ref{r:lem_ikm_2} implies
  condition~(\ref{r:mikm_5}). Conditions~(\ref{r:mikm_7}),
  (\ref{r:mikm_8}) and~(\ref{r:mikm_9}) are obvious from definitions.
\end{proof}

\begin{lemma}\label{r:lem_a_tau_c}
  If $\tau \in \Tc$ and $c \in \Sigma_\tau$ then for all states $s$ we
  have $A_\tau c \leadsto^s \top$.
\end{lemma}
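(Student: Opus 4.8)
The plan is to argue by induction on the size of the type $\tau \in \Tc$, and in the inductive step to reduce the claim for $A_\tau c$ to an application of condition $(\Xi_i^\top)$ in Definition~\ref{r:def_sim_succ_2}. The two base cases are immediate from the postulates defining $t \succ_0^s \top$. If $\tau \in \B$, then since $c \in \Sigma_\tau$ we have $A_\tau c \succ_0^s \top$ directly, hence $A_\tau c \leadsto^s \top$. If $\tau = o$, then $A_\tau \equiv H$ and $c \in \Sigma_o$, so $A_o c \equiv H c \succ_0^s \top$ by the postulate for terms of the form $H c$ with $c \in \Sigma_o$, again giving $A_\tau c \leadsto^s \top$.

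For the inductive step I would take $\tau = \tau_1 \to \tau_2$ with $\tau_1 \in \B$ and $\tau_2 \in \Tc$, so that $A_\tau = F A_{\tau_1} A_{\tau_2}$. Unfolding the convention from Notation~\ref{r:convention_kt} and applying $c$, one checks that, irrespective of whether $A_{\tau_2}$ is a lambda-abstraction,
\[
A_\tau c = F A_{\tau_1} A_{\tau_2} c =_R \Xi A_{\tau_1} (\lambda x . A_{\tau_2} (c x)).
\]
By Corollary~\ref{r:corollary_t_conv_2} it therefore suffices to prove $\Xi A_{\tau_1} (\lambda x . A_{\tau_2}(c x)) \leadsto^s \top$, which I would obtain via $(\Xi_i^\top)$ with type witness $\tau_1$.

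Since $\tau_1 \in \B$, rule $(A)$ gives $A_{\tau_1} \sim^s \tau_1$, so it remains to show that for every $s' \ge s$ and every $t_3 \in \T_{\tau_1} = \Sigma_{\tau_1}$ we have $(\lambda x . A_{\tau_2}(c x)) t_3 \leadsto^{s'} \top$. As $c \in \Sigma_{\tau_1 \to \tau_2}$ and $t_3 \in \Sigma_{\tau_1}$, the extra reduction rule of Definition~\ref{r:def_reductions_2} yields $c t_3 \contr_R \rho_2$, where $\rho_2 = \F(c)(t_3) \in \T_{\tau_2} = \Sigma_{\tau_2}$ is canonical, and hence $(\lambda x . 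A_{\tau_2}(c x)) t_3 \reduces_R A_{\tau_2} \rho_2$. Because $\rho_2 \in \Sigma_{\tau_2}$ and $\tau_2$ has strictly smaller size, the inductive hypothesis gives $A_{\tau_2} \rho_2 \leadsto^{s'} \top$ for every state, and Corollary~\ref{r:corollary_t_conv_2} transfers this to $(\lambda x . A_{\tau_2}(c x)) t_3 \leadsto^{s'} \top$. Since all relations are stable at the closure ordinal $\zeta$, the condition $\leadsto_{<\zeta}^{s'}$ appearing in $(\Xi_i^\top)$ coincides with $\leadsto^{s'}$, so $(\Xi_i^\top)$ is verified, whence $\Xi A_{\tau_1} (\lambda x . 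A_{\tau_2}(c x)) \succ^s \top$ and the claim follows.

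I expect the only delicate point to be the bookkeeping around the $F$-convention: the main obstacle is checking carefully that both syntactic shapes of $F A_{\tau_1} A_{\tau_2}$ — the one used when $A_{\tau_2}$ is a lambda-abstraction and the one used otherwise — reduce, after applying $c$, to terms that are $=_R$-equal to $\Xi A_{\tau_1}(\lambda x . A_{\tau_2}(c x))$, so that Corollary~\ref{r:corollary_t_conv_2} applies uniformly. Everything else is a routine use of the reduction rules together with the inductive hypothesis.
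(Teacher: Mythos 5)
Your proof is correct and follows exactly the route the paper intends: the paper's own proof is just the one-line ``Straightforward induction on the size of $\tau$'', and your argument is that induction spelled out, with the base cases handled by the $\succ_0^s\top$ postulates and the step by $(\Xi_i^\top)$ together with the reduction rule $c\,t_3 \contr_R \F(c)(t_3)$ and Corollary~\ref{r:corollary_t_conv_2}. The bookkeeping around the two shapes of $F\,A_{\tau_1}A_{\tau_2}$ and the identification of $\leadsto^{s'}_{<\zeta}$ with $\leadsto^{s'}$ at the closure ordinal are both handled correctly.
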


\begin{proof}
  Straightforward induction on the size of~$\tau$.
\end{proof}

It remains to prove that the values in $\N$ of formulas of
$\mbox{PRED2}_0$ are faithfully represented by the values of their
translations in $\M$. From this completeness will directly follow.

\begin{definition}
  Recall that for $c \in \Sigma^+$, we denote by $\delta(c)$ the
  element of $\N$ corresponding to $c$, if there is one. We say that
  an $\M$-valuation $\widetilde{w}$ \emph{mirrors} an $\N$-valuation
  $w$, if for every variable $x$ there exists $c \in \Sigma^+$ such
  that $w(x) = \delta(c)$ and $\widetilde{w}(x) = [c]_R$. In other
  words, $\widetilde{w}$ is the valuation assigning to each variable
  $x$ the equivalence class of the constant corresponding to the
  element $w(x)$. Note that given $w$ the valuation $\widetilde{w}$ is
  uniquely determined.
\end{definition}

To avoid confusion, from now on we use $q_1$, $q_2$, etc. for terms of
$\mbox{PRED2}_0$. By $t_1$, $t_2$, etc. we denote closed terms from
$\T(\Sigma^+)$. We use $c$, $c_1$, $c_2$, etc. for constants from
$\Sigma^+$.

\begin{lemma}\label{r:lem_val_delta}
  For any $\N$-valuation $w$ and any term $q$ of $\mbox{PRED2}_0$
  which is not a formula, we have
  $\valuation{\transl{q}}{\M}{\widetilde{w}} = [c]_R$ for some $c \in
  \Sigma^+$ such that $\delta(c) = \valuation{q}{\N}{w}$.
\end{lemma}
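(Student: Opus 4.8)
The plan is to argue by induction on the size of $q$. The crucial structural remark is that, $q$ not being a formula, its type $\tau$ is different from $o$, so by the grammar of $\mbox{PRED2}_0$ the term $q$ is either a variable, a constant, or an application $q_1 q_2$ with $q_1 \in T_{\sigma\to\tau}$ and $q_2 \in T_\sigma$ for some $\sigma \in \B$; in the latter case both $q_1$ (of the function type $\sigma\to\tau$) and $q_2$ (of the base type $\sigma$) again have non-$o$ types, hence are non-formulas and the induction hypothesis applies to them. I would actually prove the slightly stronger statement that the witnessing $c$ is the \emph{canonical} constant of type $\tau$ corresponding to $\valuation{q}{\N}{w}$, i.e. $c \in \Sigma_\tau$ with $\delta(c) = \valuation{q}{\N}{w}$; this invariant is exactly what makes the application step go through cleanly.

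For the two base cases I would just unfold definitions. If $q = x$ then $\transl{x} = x$ and $\valuation{x}{\M}{\widetilde{w}} = \widetilde{w}(x)$, which by the definition of mirroring is $[c]_R$ for the canonical $c$ with $\delta(c) = w(x) = \valuation{x}{\N}{w}$. If $q = c$ is a constant then $\transl{c} = c$ and $\valuation{c}{\M}{\widetilde{w}} = I(c) = [c^+]_R$, where by Definition~\ref{r:def_model_2} the canonical constant $c^+$ satisfies $\delta(c^+) = \valuation{c}{\N}{} = \valuation{c}{\N}{w}$.

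The heart of the argument is the application case $q = q_1 q_2$. Here $\transl{q} = \transl{q_1}\,\transl{q_2}$, so $\valuation{\transl{q}}{\M}{\widetilde{w}} = \valuation{\transl{q_1}}{\M}{\widetilde{w}} \cdot \valuation{\transl{q_2}}{\M}{\widetilde{w}}$, which by the induction hypothesis equals $[c_1]_R \cdot [c_2]_R = [c_1 c_2]_R$ for canonical $c_1 \in \Sigma_{\sigma\to\tau}$ and $c_2 \in \Sigma_\sigma$ with $\delta(c_1) = \valuation{q_1}{\N}{w}$ and $\delta(c_2) = \valuation{q_2}{\N}{w}$. Since $\sigma \in \B$ and $\sigma\to\tau \in \Tc$, the second reduction rule of $R$ from Definition~\ref{r:def_reductions_2} fires: $c_1 c_2 \contr_R c_3$ with $c_3 = \F(c_1)(c_2) \in \Sigma_\tau$. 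I would then invoke the way the associated functions $\F$ and the bijections $\Sigma_\tau \cong \D_\tau^\N$ are set up in Definition~\ref{r:def_canonical}, by which $\F(c_1)(c_2)$ is precisely the canonical constant corresponding to the $\N$-application $\delta(c_1)\cdot_\N\delta(c_2)$. Hence $\delta(c_3) = \delta(c_1)\cdot_\N\delta(c_2) = \valuation{q_1}{\N}{w}\cdot_\N\valuation{q_2}{\N}{w} = \valuation{q_1 q_2}{\N}{w}$, and so $\valuation{\transl{q}}{\M}{\widetilde{w}} = [c_1 c_2]_R = [c_3]_R$ with $\delta(c_3) = \valuation{q}{\N}{w}$, closing the induction.

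The main, and really the only, obstacle I anticipate is bookkeeping: ensuring that the rule $c\,c_1 \contr c_2$ of $R$ genuinely mirrors the application $\cdot_\N$ of the fixed Kripke model $\N$, and that the representatives handed back by the induction hypothesis are the canonical constants of \emph{function} and of \emph{base} type respectively, so that this rule is actually applicable. This is exactly why I would carry the strengthened invariant that the witness is the canonical constant corresponding to $\valuation{q}{\N}{w}$: it is trivially maintained in the two base cases and it is precisely what guarantees applicability of the reduction rule in the application case. With this in hand, no appeal to the subtler commutation, monotonicity, or $\succ$-machinery of this section is needed, and the three cases follow directly from the definitions.
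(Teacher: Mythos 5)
Your proposal is correct and follows essentially the same route as the paper's own proof: induction on the size of $q$, unfolding the definitions of $\widetilde{w}$ and $I$ in the base cases, and in the application case using the reduction rule $c_1 c_2 \to c$ of $R$ together with the correspondence $\F(c_1)(c_2) = \delta^{-1}(\delta(c_1)\cdot_\N\delta(c_2))$ to collapse $[c_1 c_2]_R$ to $[c]_R$. The strengthened invariant you carry (that the witness is the canonical constant of type $\tau$) is exactly what the paper's phrasing ``$c\in\Sigma^+$ such that $\delta(c)=\valuation{q}{\N}{w}$'' implicitly encodes, so the two arguments coincide.
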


\begin{proof}
  Induction on the size of $q$. If $q$ is a constant then $\transl{q}
  = q$ and $\valuation{\transl{q}}{\M}{\widetilde{w}} =
  \valuation{q}{\M}{\widetilde{w}} = I_\M(q) = [c]_R$ for some $c
  \in \Sigma^+$ such that $\delta(c) = \valuation{q}{\N}{}$. If $q =
  x$ is a variable of type $\tau \in \B$ then $\transl{q} = x$. So
  $\valuation{\transl{q}}{\M}{\widetilde{w}} = \widetilde{w}(x) =
  [c]_R$ for $c \in \Sigma^+$ such that $w(x) = \delta(c)$, by
  definition of $\widetilde{w}$.

  Otherwise $q \equiv q_1 q_2$. Neither $q_1$ nor $q_2$ is a formula,
  so by the inductive hypothesis
  $\valuation{\transl{q_1}}{\M}{\widetilde{w}} = [c_1]_R$ and
  $\valuation{\transl{q_2}}{\M}{\widetilde{w}} = [c_2]_R$ where
  $\delta(c_1) = \valuation{q_1}{\N}{w}$ and $\delta(c_2) =
  \valuation{q_2}{\N}{w}$. We have $\transl{q} = \transl{q_1}
  \transl{q_2}$, so $\valuation{\transl{q}}{\M}{\widetilde{w}} =
  \valuation{\transl{q_1}}{\M}{\widetilde{w}} \cdot_\M
  \valuation{\transl{q_2}}{\M}{\widetilde{w}} = [c_1]_R \cdot_\M
  \ [c_2]_R = [c_1 c_2]_R$. Let $c \in \Sigma^+$ be such that
  $\delta(c) = \delta(c_1) \cdot_\N \delta(c_2)$. In~$R$ there
  is a reduction rule $c_1 c_2 \to c$ because $\F(c_1)(c_2)
  = c$. Thus $[c_1 c_2]_R = [c]_R$. We also have $\delta(c)
  = \valuation{q_1}{\N}{w} \cdot_\N \valuation{q_2}{\N}{w} =
  \valuation{q_1 q_2}{\N}{w} = \valuation{q}{\N}{w}$.
\end{proof}

\begin{lemma}\label{r:lem_forcing}
  For any formula $\phi$ of $\mbox{PRED2}_0$, any state $s$, and any
  $\N$-valuation $w$ we have:
  \[
  s, w \forces_\N \phi \mathrm{\ \ iff\ \ } s, \widetilde{w}
  \forces_\M \transl{\phi}
  \]
\end{lemma}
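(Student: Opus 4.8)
The plan is to prove the biconditional $s, w \forces_\N \phi \iff s, \widetilde{w} \forces_\M \transl{\phi}$ by induction on the structure of the formula $\phi$. The three cases to handle are: atomic formulas (built by application from a predicate head), implications $\phi_1 \supset \phi_2$, and universal quantifications $\forall x_\tau . \phi_0$ with $\tau \in \B \cup \{o\}$. Throughout I would unfold the definitions so that $s, \widetilde{w} \forces_\M \transl{\phi}$ becomes $\transl{\phi}^{\widetilde{w}} \leadsto^s \top$ (where I write $\transl{\phi}^{\widetilde{w}}$ for $\valuation{\transl{\phi}}{\M}{\widetilde{w}}$ read as a closed term), using that $s \in \varsigma_\M([t]_R)$ iff $t \leadsto^s \top$ by condition~(\ref{r:mikm_1}) of Fact~\ref{r:fact_illat_model}, and that on the $\N$-side $s, w \forces_\N \phi$ means $s \in \varsigma_\N(\valuation{\phi}{\N}{w})$.

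For the \textbf{atomic case}, $\phi \equiv P q_1 \cdots q_k$ where $P$ is either a predicate constant or the translation produces a head of type $o$. Here $\transl{\phi} = \transl{P}\transl{q_1}\cdots\transl{q_k}$, and each $q_i$ is a non-formula term, so Lemma~\ref{r:lem_val_delta} applies: $\valuation{\transl{q_i}}{\M}{\widetilde{w}} = [c_i]_R$ with $\delta(c_i) = \valuation{q_i}{\N}{w}$. Using the reduction rules $c\, c' \to c''$ built into $R$ that mirror application in $\N$ (Definition~\ref{r:def_reductions_2}), one reduces $\transl{\phi}^{\widetilde{w}}$ to a single constant $c \in \Sigma_o$ with $\delta(c) = \valuation{\phi}{\N}{w}$. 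The result is then the base case of the definition of $\succ^s$: $c \succ^s \top$ iff $s \in \varsigma_\N(c)$, which is precisely $s, w \forces_\N \phi$. I would invoke Corollary~\ref{r:corollary_t_conv_2} to pass between $\transl{\phi}^{\widetilde{w}}$ and its $R$-normal form $c$.

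For the \textbf{implication case} $\phi = \phi_1 \supset \phi_2$, I would combine the Kripke-model clause for $\supset$ on the $\N$-side (from Definition~\ref{r:def_kripke_pred2}) with the corresponding illative conditions on the $\M$-side. The forward and backward directions both quantify over $s' \ge s$; since $\supset$ is defined via $\Xi H (\ldots)$ and $H \sim^{s'} o$ always holds, the premise ``$t_1 t_3 \leadsto^{s'} \top$ for $t_3 \in \T_o = \{\top,\bot\}$'' matches the clause ``for all $s' \ge s$ with $s', \widetilde{w} \forces \phi_1$.'' I would feed this through Lemma~\ref{r:lem_ikm_1_3} (to \emph{build} $\Xi t_1 t_2 \leadsto^s \top$) and Lemma~\ref{r:lem_ikm_2} (to \emph{extract} the elimination), applying the inductive hypothesis to $\phi_1$ and $\phi_2$ at each accessible state $s'$; here Corollary~\ref{r:corollary_upward_closed} ensures the requisite monotonicity. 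The \textbf{quantifier case} $\forall x_\tau . \phi_0$ is the most delicate and is where I expect the main obstacle to lie: $\transl{\forall x_\tau.\phi_0} = \Xi A_\tau (\lambda x . \transl{\phi_0})$, and I must show $A_\tau \sim^s \tau$ represents exactly the type $\tau$, so that quantifying over $t_3 \in \T_\tau$ on the illative side corresponds to quantifying over all $d \in \D_\tau^\N$ on the Kripke side. For $\tau \in \B$ this uses Lemma~\ref{r:lem_a_tau_c} and the bijection between $\T_\tau = \Sigma_\tau$ and $\D_\tau^\N$; for $\tau = o$ it uses $\T_o = \{\top,\bot\}$ matching $\D_o = \{\top,\bot\}$. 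The subtlety is aligning the substitution lemma for $\N$-valuations (the mirroring property $\widetilde{w[x/d]} = \widetilde{w}[x/[c]_R]$ for the constant $c$ representing $d$) with $\beta$-reduction $(\lambda x.\transl{\phi_0})\,t_3 \to \transl{\phi_0}[x/t_3]$ on the $\M$-side, and then invoking the inductive hypothesis on $\phi_0$; I would use Lemma~\ref{r:lem_ikm_1_3}, Lemma~\ref{r:lem_ikm_2}, Lemma~\ref{r:lem_l_sim_2} and Lemma~\ref{r:lem_sim_correct_2} to connect $\Xi A_\tau (\lambda x.\transl{\phi_0})\leadsto^s\top$ with the universal clause, taking care that the $s' \ge s$ quantification in $(\Xi_i^\top)$ matches the Kripke clause for $\forall$.
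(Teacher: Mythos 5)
Your overall architecture is the paper's: induction on the formula, Lemma~\ref{r:lem_val_delta} plus the base clause of $\succ_0^s$ for applications (and, trivially, for propositional variables and constants, which you should not fold silently into the ``atomic'' case), the illative Kripke-model conditions for $\supset$ (the paper invokes them in the packaged form of Fact~\ref{r:fact_supset_illat}), and the $\delta$-mirroring of valuations together with $(\Xi_i^\top)$ and $\Xi$-elimination for $\forall$. However, two of your supporting claims misread the Section~\ref{r:sec_embedding} construction, and one of them would break a subcase if followed literally. First, $\supset$ is $\lambda x y.\,\Xi(Kx)(Ky)$, not ``$\Xi H(\ldots)$'': in $\transl{\varphi}\supset\transl{\psi}$ the quantification bound is $K\,\valuation{\transl{\varphi}}{\M}{\widetilde{w}}$, whose relevant types are $\omega$ and $\varepsilon$ via rules $(K\omega)$/$(K\varepsilon)$, not $o$; the match with ``for all $s'\ge s$ such that $s'\forces\varphi$'' comes from $K t_1\, t_3$ contracting to $t_1$, and the introduction direction additionally needs the premise $L(K\valuation{\transl{\varphi}}{\M}{\widetilde{w}})\equiv H\valuation{\transl{\varphi}}{\M}{\widetilde{w}}$ being true at $s$ --- a point that both you and the paper's ``the other direction is analogous'' leave implicit, and which requires a small auxiliary argument that translated formulas denote propositions.

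Second, and more consequentially, in this section $\T_o=\Sigma_o$ contains one canonical constant for \emph{every} element of $\D_o^{\N}$, where $\N$ is an arbitrary Kripke model whose domain of propositions can be large; $\T_o=\{\top,\bot\}$ holds only in the Section~\ref{r:sec_construction} construction, which starts from a full classical model with $\D_o=\{\top,\bot\}$. Consequently, verifying the $(\Xi_i^\top)$ premise for $\Xi H(\lambda x.\transl{\phi_0})$ only at $t_3\in\{\top,\bot\}$ does not establish the $\N\Rightarrow\M$ direction of the $\forall x_o$ case: the definition quantifies over all of $\T_o$. The repair is exactly the uniform argument you already give for $\tau\in\B$, which the paper applies to $\B\cup\{o\}$ in one breath: $\delta$ is defined on every $t_3\in\T_\tau$ for $\tau\in\B\cup\{o\}$, so one sets $u=w[x/\delta(t_3)]$, uses $\widetilde{u}=\widetilde{w}[x/t_3]$ and $\beta$-reduction to reduce the claim to $\valuation{\transl{\phi_0}}{\M}{\widetilde{u}}$, and applies the inductive hypothesis; no special two-element case analysis for $o$ is available or needed.
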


\begin{proof}
  Induction on the size of $\phi$.

  If $\phi$ is a variable or a constant, then our claim follows easily
  from definitions. If $\phi = q_1 q_2$, then neither $q_1$ nor $q_2$
  is a formula, so by Lemma~\ref{r:lem_val_delta} we have
  $\valuation{\transl{q_1}}{\M}{\widetilde{w}} = [c_1]_R$ and
  $\valuation{\transl{q_2}}{\M}{\widetilde{w}} = [c_2]_R$ where $c_1,
  c_2 \in \Sigma^+$ and $\delta(c_1) = \valuation{q_1}{\N}{w}$,
  $\delta(c_2) = \valuation{q_2}{\N}{w}$. We have $[c_1]_R \cdot
  [c_2]_R = [c_1 c_2]_R = [c]_R$ for $c \in \Sigma^+$ such that
  $\delta(c) = \delta(c_1) \cdot_\N \delta(c_2) = \valuation{t_1
    t_2}{\N}{w}$. The claim now follows from the definition
  of~$\succ_0^s$.

  If $\phi = \varphi \supset \psi$ then $\transl{\phi} =
  \transl{\varphi} \supset \transl{\psi}$. Suppose $s, \widetilde{w}
  \forces_\M \transl{\varphi} \supset \transl{\psi}$. Let $s' \ge s$
  be such that $s', w \forces_\N \varphi$. By the inductive hypothesis
  $s', \widetilde{w} \forces_\M \transl{\varphi}$. Note that we also
  have $s', \widetilde{w} \forces_\M \transl{\varphi} \supset
  \transl{\psi}$. By condition~(\ref{r:cond_supset_illat_02}) in
  Fact~\ref{r:fact_supset_illat} we obtain $s', \widetilde{w}
  \forces_\M \transl{\psi}$, which implies $s', w \forces_\N \psi$ by
  the IH. From Definition~\ref{r:def_kripke_pred2} it now follows that
  $s, w \forces_\N \varphi \supset \psi$. The other direction is
  analogous.

  If $\phi = \forall x . \varphi$ where $x \in V_{\tau}$, $\tau \in \B
  \cup \{o\}$, then $\transl{\forall x . \varphi} = \Xi A_{\tau}
  \lambda x . \transl{\varphi}$.

  Suppose $s, \widetilde{w} \forces_\M \transl{\forall x . \varphi}$,
  i.e., $s, \widetilde{w} \forces_\M \Xi A_{\tau} \lambda x
  . \transl{\varphi}$. Let $s' \ge s$, $d \in \D_{\tau}^\N$, and $u =
  w[x/d]$. There exists $c \in \Sigma^+$ such that $\widetilde{u}(x) =
  [c]_R$ and $\delta(c) = d$. The constant $c$ is a canonical constant
  of type $\tau \in \B \cup \{o\}$, so $s', \widetilde{w} \forces_\M
  A_{\tau} c$, by definition of~$\M$. We also have $s', \widetilde{w}
  \forces_\M \Xi A_{\tau} \lambda x . \transl{\varphi}$, so we
  conclude that $s', \widetilde{w} \forces_\M (\lambda x
  . \transl{\varphi}) c$. This implies $s', \widetilde{u} \forces_\M
  \transl{\varphi}$, and hence $s', w[x/d] \forces_\N \varphi$ by the
  IH. Therefore $s, w \forces_\N \forall x . \varphi$, by
  Definition~\ref{r:def_kripke_pred2}.

  For the other direction, we need to show that if $s, w \forces_\M
  \forall x . \varphi$ then $s, \widetilde{w} \forces_\M \Xi A_\tau
  \lambda x . \transl{\varphi}$, where $\tau \in \B \cup \{o\}$. If
  $v$ is an $\M$-valuation and $t \in \T(\Sigma^+)$, then by~$t^v$ we
  denote the term~$t$ with every free variable~$x$ substituted for a
  representant of the equivalence class~$v(x)$. By induction on the
  size of~$t$ one may easily verify that $\valuation{t}{\M}{v} =
  \valuation{t^v}{\M}{}$, but Lemma~\ref{r:lem_extensional_2} is
  needed for the case of lambda-abstraction. Hence $s, v \forces_\M t$
  is equivalent to $t^v \leadsto^s \top$. Now the condition $s,
  \widetilde{w} \forces_\M \Xi A_\tau \lambda x . \transl{\varphi}$
  may be reformulated as $\Xi A_\tau (\lambda x
  . \transl{\varphi})^{\widetilde{w}} \leadsto^s \top$. Therefore it
  suffices to prove, assuming $s, w \forces_\N \forall x . \varphi$,
  that for all canonical constants $c \in \Sigma_\tau$ of type~$\tau
  \in \B \cup \{o\}$ and all $s' \ge s$ we have $(\lambda x
  . \transl{\varphi})^{\widetilde{w}} c \leadsto^{s'} \top$. Let $u =
  w[x/\delta(c)]$. We have $\widetilde{u} = \widetilde{w}[x/c]$. Hence
  $(\lambda x . \transl{\varphi})^{\widetilde{w}} c \leadsto^{s'}
  \top$ is equivalent to $\transl{\varphi}^{\widetilde{u}}
  \leadsto^{s'} \top$, which is the same as $s, \widetilde{u}
  \forces_\M \transl{\varphi}$. Because $s, w \forces_\N \forall x
  . \varphi$, $s' \ge s$ and $u = w[x / \delta(c)]$, we conclude that
  $s', u \forces_\N \varphi$. By the inductive hypothesis we obtain
  $s, \widetilde{u} \forces_\M \transl{\varphi}$ which completes the
  proof.
\end{proof}

\begin{theorem}
  The embedding is complete, i.e., $\transl{\Delta}, \Gamma(\Delta,
  \varphi) \proves_{\I_0} \transl{\varphi}$ implies $\Delta
  \proves_{\mathrm{PRED2}_0} \varphi$.
\end{theorem}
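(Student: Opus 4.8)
The plan is to prove the contrapositive: assuming $\Delta \notproves_{\mathrm{PRED2}_0} \varphi$, I will exhibit a state and a valuation in the model $\M$ of Definition~\ref{r:def_model_2} that force $\transl{\Delta}$ together with the whole context $\Gamma(\Delta,\varphi)$ but not $\transl{\varphi}$, and then invoke soundness of the illative Kripke semantics. First I would apply completeness of the Kripke semantics for $\mathrm{PRED2}_0$ (proved in Section~\ref{r:sec_prelim}): since $\Delta \notproves_{\mathrm{PRED2}_0} \varphi$, we have $\Delta \notforces \varphi$, so there exist a Kripke model $\N$, a state $s$, and an $\N$-valuation $w$ with $s, w \forces_\N \Delta$ but $s, w \notforces_\N \varphi$. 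I take this $\N$ to be the model fixed throughout the section, so that $\M$ and all the preceding lemmas apply to it.

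Next I would transfer forcing across the embedding. The structure $\M$ is an illative Kripke model for $\I_0$ by the corollary following Lemma~\ref{r:lem_ikm_2}, so it is a legitimate witness. By Lemma~\ref{r:lem_forcing}, applied to each $\psi \in \Delta$ and to $\varphi$, we get $s, \widetilde{w} \forces_\M \transl{\psi}$ for all $\psi \in \Delta$ and $s, \widetilde{w} \notforces_\M \transl{\varphi}$, where $\widetilde{w}$ is the $\M$-valuation mirroring $w$. It then only remains to check that the auxiliary context is validated as well, i.e. that $s, \widetilde{w} \forces_\M t$ for every $t \in \Gamma(\Delta,\varphi)$.

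This last verification is the only point requiring care, and it is where Lemma~\ref{r:lem_a_tau_c} does the work. Every element of $\Gamma(\Delta,\varphi)$ has one of the shapes $A_\tau x$, $A_\tau c$, $L A_\tau$, or $A_\tau y$. For an argument that is a variable of type $\tau \in \Tc$, the mirroring valuation gives $\widetilde{w}(x) = [c]_R$ for the canonical constant $c \in \Sigma_\tau$ corresponding to $w(x) \in \D_\tau^\N$, whence $\valuation{A_\tau x}{\M}{\widetilde{w}} = [A_\tau c]_R$; for a constant argument $c \in \Sigma_\tau$ we similarly obtain $\valuation{A_\tau c}{\M}{\widetilde{w}} = [A_\tau c^+]_R$ with $c^+ \in \Sigma_\tau$ canonical. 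In each such case Lemma~\ref{r:lem_a_tau_c} yields $A_\tau c \leadsto^s \top$, so the corresponding context term is forced at $s$; and $L A_\tau \leadsto^s \top$ for $\tau \in \B$ holds by definition, being condition~(\ref{r:mikm_9}) of Fact~\ref{r:fact_illat_model}. Hence $s, \widetilde{w} \forces_\M \Gamma(\Delta,\varphi)$.

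Putting the pieces together, $\M$, $s$ and $\widetilde{w}$ witness $\transl{\Delta}, \Gamma(\Delta,\varphi) \notforces_{\I_0} \transl{\varphi}$. By the soundness half of Theorem~\ref{r:thm_ikm_complete} (that $\Gamma \proves_{\I_0} t$ implies $\Gamma \forces_{\I_0} t$), its contrapositive delivers $\transl{\Delta}, \Gamma(\Delta,\varphi) \notproves_{\I_0} \transl{\varphi}$, which is exactly the desired conclusion. I expect the main subtlety to be essentially bookkeeping: confirming that $\widetilde{w}$ sends each variable occurring in the context to the class of a canonical constant of the right type, so that Lemma~\ref{r:lem_a_tau_c} applies uniformly across base, propositional, and function types. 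The genuinely hard content — that $\M$ is an illative Kripke model and that it faithfully mirrors $\N$ — has already been discharged in the construction of $\M$ and in Lemma~\ref{r:lem_forcing}, so the theorem is their assembly.
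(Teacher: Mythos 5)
Your proposal is correct and follows essentially the same route as the paper's own proof: obtain a countermodel $\N$ from completeness of the Kripke semantics for $\mathrm{PRED2}_0$, instantiate the construction of Definition~\ref{r:def_model_2} with it, transfer forcing via Lemma~\ref{r:lem_forcing}, check $\Gamma(\Delta,\varphi)$ via Lemma~\ref{r:lem_a_tau_c}, and conclude by the soundness half of Theorem~\ref{r:thm_ikm_complete}. The only difference is that you spell out the verification of the context $\Gamma(\Delta,\varphi)$, which the paper leaves as ``a matter of routine''; your elaboration is accurate.
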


\begin{proof}
  Suppose $\Delta \notproves_{\mathrm{PRED2}_0} \varphi$. Let $\N$ be
  a Kripke model, $v$ an $\N$-valuation and $s$ a state of $\N$ such
  that $s, v \forces_\N \Delta$, but $s, v \notforces_\N \varphi$. We
  use the construction in Definition~\ref{r:def_model_2} to obtain an
  illative Kripke model $\M$. By Lemma~\ref{r:lem_forcing} the
  condition $s, v \forces_\N \psi$ is equivalent to $s, \widetilde{v}
  \forces_\M \transl{\psi}$. Therefore $s, \widetilde{v} \forces_\M
  \transl{\Delta}$ but $s, \widetilde{v} \notforces_\M$. Using
  Lemma~\ref{r:lem_a_tau_c}, it is a matter of routine to verify that
  also $s, \widetilde{v} \forces_\M \Gamma(\Delta, \varphi)$. By the
  soundness part of Theorem~\ref{r:thm_ikm_complete} this implies
  $\transl{\Delta}, \Gamma(\Delta, \varphi) \notproves_{\I_0}
  \transl{\varphi}$.
\end{proof}

\section{Remarks and open problems}

\begin{remark}
  In this paper we use lambda-calculus with
  $\beta\eta$-equality. Lambda-calculus with $\beta$-equality or
  combinatory logic with weak equality could be used instead. The
  proofs and definitions would only need minor adjustments.
\end{remark}

\begin{remark}
  It is clear that the methods presented here may be used to prove
  completeness of the embedding of propositional second-order logic
  into an extension of~$\I P$ from~\cite{illat01}. This extension
  of~$\I P$ is essentially $\I_0$ but with rules $P_i$, $P_e$, $P_H$
  from Lemma~\ref{r:lem_illat_admissible} instead of the more general
  rules for~$\Xi$. Whether such an extension is complete for
  second-order propositional logic was posed as an open problem in
  \cite{illat01}.

  The open problem related to~$\I_0$ given in~\cite{illat01} was
  whether full second-order predicate logic may be faithfully embedded
  into it. We do not know the answer to this question. One problem
  with extending our methods was already noted in
  Remark~\ref{r:rem_problem_hol}. It is not straightforward to extend
  our construction to obtain a model with quantification over
  predicates and more than one state. Another obstacle is that our
  construction of a model for~$\I_\omega^c$ crucially depends on the
  fact that the model of higher-order logic being transformed is a
  full model. Thus the construction cannot be used to show
  completeness of an embedding of higher-order logic
  into~$\I_\omega^c$. Informally speaking, a full model is needed to
  ensure that no ``essentially new'' functions may be ``created'' at
  later stages $\alpha$ of the inductive definition.

  In \cite{illat02} and \cite{illat03} two indirect
  propositions-as-types translations of first-order propositional and
  predicate logic were shown complete for two illative systems $\I F$
  and $\I G$, which are stronger than $\I P$ and $\I \Xi$,
  respectively. It is interesting whether our methods may be used to
  obtain these results, or improve on them.
\end{remark}

\begin{remark}
  In~\cite{Czajka2011} we presented an algebraic treatment of a
  combination of untyped combinatory logic with first-order classical
  logic. The model construction and the completeness proof there
  follow essentially the same pattern as those presented here, but
  they are much simpler. The system in~\cite{Czajka2011} contains an
  additional constant $\mbox{Cond}$ which allows for branching on
  formulas. It is not difficult to see that we could add such a
  constant to~$\I_\omega^c$ and our model construction would still go
  through.
\end{remark}

%% NOTE: the problem with defining T_\alpha^s by means of reduction
%% when we have more than one state is that then R_\alpha^s
%% \not\subseteq R_\alpha^{s'} for $s' \ge s$. It would be immediate
%% that if $t \contr_\alpha^s \top$ then $t \contr_\alpha^{s'}
%% \top$, but the implication $t \reduces_\alpha^s \top$ => $t
%% \reduces_\alpha^{s'} \top$ would not be obvious. Here it would
%% probably hold, because there is no way we can ``use'' a truth
%% value.

\begin{remark}
  The construction from Section~\ref{r:sec_construction} could also be
  used to show that classical many-sorted first-order logic may be
  faithfully embedded into~$\I_\omega^c$, but we omit this proof as it
  is analogous to that from Section~\ref{r:sec_embedding}. We do not
  know whether~$\I_\omega^c$ is conservative over stronger systems of
  logic, or whether~$\I_\omega$ is conservative over intuitionistic
  first-order logic.
\end{remark}

\bibliography{hoicl}{}
\bibliographystyle{alpha}

\end{document}